\numberwithin{equation}{section}
\newtheorem{theorem}{Theorem}[section]
\newtheorem{lemma}[theorem]{Lemma}
\theoremstyle{definition}
\newtheorem{remark}[theorem]{Remark}
\newcommand{\R}{\mathbb{R}}
\newcommand{\eeq}{\end{equation}}
\newcommand{\beq}{\begin{equation*}}
\begin{document}
	
	\title[  prescribed the mass solution]{
Normalized solutions on large smooth domains to the  Schr\"{o}dinger equations with potential  and combined nonlinearities: The Sobolev critical case }

	\thanks{The research was supported by National Science Foundation of China (****)}

\author{ Xiaolu Lin}
	\address{ Xiaolu Lin,~School of Mathematics and Statistics, Central China Normal University, Wuhan 430079, P. R. China}
	\email{ xllin@ccnu.edu.cn}
	\author{Yanjun Liu}
	\address{ Yanjun Liu,~School of Mathematical Sciences, Chongqing Normal University, Chongqing 401331, P. R. China}
	\email{liuyj@mail.nankai.edu.cn}

	\author{Zongyan Lv}
	\address{ Zongyan Lv,~ Center for Mathematical Sciences, Wuhan University of Technology, Wuhan 430070, P. R. China	}
	\email{zongyanlv0535@163.com}
	
	\date{}

\maketitle

\begin{abstract}
In this paper,  we consider the existence and multiplicity of prescribed mass
solutions to the following nonlinear Schr\"{o}dinger equations with mixed nonlinearities:
\begin{equation*}
    \begin{cases}
        -\Delta u+V(x)u+\lambda u=|u|^{2^*-2}u+\beta|u|^{p-2}u \\
        \|u\|_2^2=\int|u|^2\mathrm{d}x=\alpha,
    \end{cases}
\end{equation*}
both on large bounded smooth star-shaped domain $\Omega\subset\mathbb{R}^{N}$ and on $\mathbb{R}^{N}$, where $N \geq 3$, $2<p<2+\frac{4}{N}$, $2^*=\frac{2 N}{N-2}$  is the critical Sobolev  exponent and $V(x)$ is the potential. The standard approach based on the Pohozaev identity to obtain normalized solutions is invalid as the presence of potential $V(x)$. Besides,  Our study can be regarded as a Sobolev critical case complement of Bartsch-Qi-Zou (Math Ann 390, 4813--4859, 2024), which has addressed an open problem raised in Bartsch et al. (Commun Partial Differ Equ 46(9):1729--1756, 2021).

		\vskip 0.2cm
		\noindent{\bf MR(2010) Subject Classification:} {35J60; 35J20; 35R25}
		\vskip 0.2cm
		\noindent{\bf Key words:} {Normalized solutions; Potential; Mass subcritical and Sobolev critical growth}
	\end{abstract}

\section{Introduction}

	The aim of this paper is to  study the existence and multiplicity of solutions for the following nonlinear Schr\"{o}dinger equation with the critical Sobolev growth:
		\begin{equation}\label{main-eq}
			\begin{cases}
				-\Delta u+V(x)u+\lambda u=|u|^{2^*-2}u+\beta|u|^{p-2}u\quad&\text{in}\ \Omega,\\
				u\in H_0^1(\Omega),\quad\int_{\Omega}|u|^2dx=\alpha,
			\end{cases}
		\end{equation}
	where $\Omega \subset \mathbb{R}^N$ is either a bounded smooth star-shaped domain or all of $\mathbb{R}^N$, $N \geq 3$, $2<p<2+\frac{4}{N}$, $2^*=\frac{2 N}{N-2}$  is the critical Sobolev  exponent and $V(x)$ is the potential. The mass $\alpha>0$ and the parameter $\beta \in \mathbb{R}$ are prescribed. Here  the frequency $\lambda$  appears as a Lagrange multiplier.
		
 It is well known that Eq \eqref{main-eq} comes from the study of standing waves for the nonlinear Schr\"{o}dinger
		equation (NLS) with combined power nonlinearities:
		\begin{equation}\label{schro-eq}
				i\frac{\partial\Phi}{\partial t}-V(x)\Phi+\Delta\Phi+\beta|\Phi|^{p-2}\Phi+|\Phi|^{2^*-2}\Phi=0, \quad (t,x)\in\mathbb{R}\times\mathbb{R}^N,
		\end{equation}
		The Schr\"{o}dinger equation is a fundamental equation in quantum mechanics, which can be used to describe many physical phenomena, for example, the nonlinear optical problems and the Bose-Einstein condensates, see, e.g. \cite{AEMW95,ESY10}.
		The ansatz $\Phi(x,t)=e^{i\lambda t}u(x)$ for standing waves solutions leads to the equation
		\begin{equation}\label{nls-el-eq}
			-\Delta u+V(x)u+\lambda u=f(|u|)u,\quad\text{in}\ \mathbb{R}^{N}.
		\end{equation}
		
			In the equation \eqref{nls-el-eq}, if $\lambda $ is given, we call it fixed frequency problem.
For a fixed frequency $\lambda \in \R  $,	the existence and multiplicity of solutions to \eqref{nls-el-eq} has been investigated in the last two decades by many authors.
The literature in this direction is huge and we do not even make an attempt to summarize it here (see
		e.g. \cite{AIIK19,AlJ22,AlT23,CP09,HLW24} for a survey on almost classical results).

		Recall that the important feature of  \eqref{schro-eq} is that  $L^2$-norm of $\Phi(\cdot,u)$  is  conserved in time, therefore it is natural to consider \eqref{nls-el-eq} with the constraint $\int_{\Omega}|u|^2dx=\alpha$. (see \cite{CL})
		In this case, the mass $\alpha > 0$  is prescribed, while the frequency $\lambda$ is unknown and  comes out as a Lagrange multiplier.  	The existence and properties of these normalized solutions recently has attracted the attention 	of many researchers.

		In the autonomous case, i.e. the potential $V$ is a constant, the NLS equation on the whole space $\mathbb{R}^N$ with combined  power nonlinearity  has attracted a lot of attention since the classical paper Tao-Visan-Zhang \cite{TVZ07} appeared.
		For example, about the situation  $V(x)=0$:
		if the combined power nonlinearities is purely $L^2$-subcritical, i.e. $2<p<q<2+\frac{4}{N}$, then the energy is bounded from below on $S_\alpha$. Thus, for every $\alpha,\beta>0$, a ground state can be found as global minimizers of the energy functional constrained in $S_\alpha$, see \cite{Shi17}.
		In the purely $L^2$-supercritical case, i.e. $2+\frac{4}{N}<p<q$, the main  difficulty is that the energy is unbounded from below on $S_\alpha$; however,  Jeanjean \cite{Jean} showed that a normalized groundstate does exist by applying a smart compactness argument, Pohozaev identity and the mountain pass lemma. In \cite{BS17}, Bartsch and Soave further established that the Pohozaev manifold $\{u\in S_\alpha:P(u)=0\}$ is a natural constraint. Moreover, the conditions in \cite{Jean,BS17} can be weaken by reference \cite{Jeanlu20,BM21}.
		Recently,  Soave \cite{Soa20} studied what happens when the combined power nonlinearities  are
		of mixed type, that is $2<p<2+\frac{4}{N}<q<2^*$. Under different assumptions on $\alpha>0,\beta\in\mathbb{R}$, Soave proved several existence and stability/instability by decomposing the Nehari-Pohozaev manifold in a subtle way.
        In particular, for $q=2^*$,
		Soave \cite{Soac20}  further given the existence of ground states, and the existence of mountain-pass solution for $N\ge5$.
		In \cite{JL22}, Jeanjean-Le further proved that, when $N\ge4$, there also
		exist standing waves which are not ground states and are located at a mountain-pass level of the Energy functional. These solutions are unstable by blow-up in finite time.
		For $N=3$, Wei-Wu \cite{WW22} obtianed the existence of solutions of mountain-pass type.
		For more information on the existence of solutions for mixed nonlinear terms of Schr\"{o}dinger equation and systems, we recommend reference \cite{BLZ23,QZ23}.

		We would like to mention here that some paper involved in  dealing with the potential $V(x)$ is  non-constant.   For the case $V\le0$, $V(x)\to0$ as $|x|\to\infty$(decaying potentials), and $f$ is mass subcritical, Ikoma-Miyamoto \cite{IM20} proved that  $e(\alpha)$ is attained for $\alpha>\alpha_0$ and $e(\alpha)$ is not attained for $0<\alpha<\alpha_0$. For the case $V(x)\ge0$, $V(x)\to0$ as $|x|\to\infty$, and allow that the potential has singularities.
        Bartsch-Molle-Rizzi-Verzini \cite{BMRV21} concerned with the existence of normalized solutions based on a new min-max argument. The recent papers \cite{DZ,BHG2,ZZ} and references	therein for new contributions.


		There are only a few approaches and results on the study of normalized solutions in bounded domains \cite{CDCE,BHSG,BHG1,BHG3,DG,DGY,SZ,WJ1}, which dealt with the the autonomous case. In view of the inherent characteristics of the problem with prescribed mass, the so-called Pohozaev manifold is not available when working in bounded domains.  However,  it is worth to point that  the method of these papers above don not work for non-constant $V$.
        Recently, Bartsch-Qi-Zou \cite{BQZ24} considered the existence and properties of normalized solutions with a combination of  mass subcritical and  mass supercritical, i.e. $f(|u|)u=|u|^{q-2}+\beta|u|^{p-2}u$, with $2<p<2+\frac{4}{N}<q<2^*$.

In spired by \cite{BQZ24},  as naturally
expected, we are going to consider the Sobolev critical case, i.e. $q=2^*$.   The study of the
convergence of Palais-Smale sequences becomes more complicated as the presence of the Sobolev critical term in \eqref{main-eq}.
            Therefore, we have to recover the compactness, which makes the problem more difficult.
             By subtle energy estimates, we can obtain the existence of normalized solution to \eqref{main-eq} on large bounded smooth star-shaped domains $\Omega_r$ and further obtain the asymptotic behavior (as the radius $r$ tends to infinity), i.e. the existence of normalized solution in whole space.
Specially, the present paper seems to be the first
result for normalized solutions to the  Schr\"{o}dinger equations with potential  and combined nonlinearities of  Sobolev critical case  in $\Omega$ which is large bounded domain even expand to the whole space.

		Throughout of this paper, we will use the following notations: Let $m_+:=\max\{m,0\}$, $m_-:=\min\{m,0\}$ with $m\in\mathbb{R}$. For $\Omega\subset\mathbb{R}^{N}$, $r>0$ we set
		$
		\Omega_r=\Big\{rx\in\mathbb{R}^{N}:x\in\Omega\Big\}
		$
		and
		\[
		S_{r,\alpha}:=S_{\alpha}\cap H_0^1(\mathbb{R}^{N})=\Big\{u\in H_0^1(\Omega_r):\|u\|_{L^2(\Omega_r)}^2=\alpha\Big\}.
		\]
		without loss of generality,  we assume that $\Omega\subset\mathbb{R}^{N}$ is a bounded smooth domain with $0\in\Omega$ and star-shaped with respect to 0.  Let $S$ the optimal constant of the Sobolev embedding from $D^{1,2}(\mathbb{R}^{N})$ to $L^{2^*}(\mathbb{R}^{N})$ see \cite{Aub76}.
		Before stating our main results, we  state our basic assumptions on the potential,
		\begin{description}
			\item[$(V_0)$] $V\in C(\mathbb{R}^{N})\cap L^{\frac{N}{2}}(\mathbb{R}^{N})$ is bounded and $\|V_-\|_{\frac{N}{2}}<S$.
		\end{description}
		For some results we will require that $V$ is $C^1$ and define
		$\tilde{V}:\mathbb{R}^{N}\to\mathbb{R}$ by $\tilde{V}(x)=\nabla V(x)\cdot x$.
		In our first result, we consider the case $\beta>0$.

\begin{theorem}\label{beta>0-e<0-Omega}
Suppose that $(V_0)$ and $\beta>0$ hold, and set
\[
\alpha_V=\Bigg(\frac{p}{\beta C_{N,p}}\bigg(1-\frac{4-N(p-2)}{2(2^*-p)}\bigg)
\bigg(\frac{(1-\|V_-\|_{\frac{N}{2}}S^{-1})}{2}\bigg)^{\frac{N(2^*-p)}{2(2^*-2)}}
\bigg(\frac{(4-N(p-2))S^{2^*/2}2^*}{2(2^*-p)}\bigg)^{\frac{4-N(p-2)}{2(2^*-2)}}
\Bigg)^{\frac{4}{2p-N(p-2)}}.
\]
Then the following hold for $0<\alpha<\alpha_V$:
\begin{description}
  \item[(i)]There exists $r_\alpha>0$ such that \eqref{main-eq} on $\Omega_r$ with $r>r_\alpha$ has a local minimum type solution $(\lambda_{r,\alpha},u_{r,\alpha})$ with $u_{r,\alpha}>0$ in $\Omega_r$ and negative energy $E_V(u_{r,\alpha})<0$.
  \item[(ii)] There exists $C_\alpha>0$ such that
  \[
  \underset{r\to\infty}{\limsup}\,\underset{x\in\Omega_r}{\max}\,u_{r,\alpha}(x)<C_\alpha,\quad
  \underset{r\to\infty}{\liminf}\,\lambda_{r,\alpha}>0.
  \]
\end{description}
\end{theorem}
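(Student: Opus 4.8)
The plan is to obtain $u_{r,\alpha}$ as a constrained local minimizer of
\[
E_V(u)=\frac12\int_{\Omega_r}|\nabla u|^2\,dx+\frac12\int_{\Omega_r}V|u|^2\,dx-\frac1{2^*}\int_{\Omega_r}|u|^{2^*}\,dx-\frac{\beta}{p}\int_{\Omega_r}|u|^{p}\,dx
\]
on $B_{\rho}\cap S_{r,\alpha}$, where $B_\rho=\{u:\|\nabla u\|_2\le\rho\}$. First I would record the lower bound produced by $(V_0)$, by the Sobolev inequality $\|u\|_{2^*}^{2^*}\le S^{-2^*/2}\|\nabla u\|_2^{2^*}$, and by Gagliardo--Nirenberg $\|u\|_p^p\le C_{N,p}\,\alpha^{(2p-N(p-2))/4}\|\nabla u\|_2^{N(p-2)/2}$, namely $E_V(u)\ge g(\|\nabla u\|_2)$ with
\[
g(t)=\frac{1-\|V_-\|_{N/2}S^{-1}}{2}\,t^2-\frac{1}{2^*}S^{-2^*/2}\,t^{2^*}-\frac{\beta}{p}C_{N,p}\,\alpha^{\frac{2p-N(p-2)}{4}}\,t^{\frac{N(p-2)}{2}}.
\]
Since $\tfrac{N(p-2)}2<2<2^*$, $g$ is negative for small $t>0$; a quantitative study of $g$ in $t$ shows that for $0<\alpha<\alpha_V$ the function $g$ has a \emph{confining} shape, i.e. there are $0<\rho_0<\rho_1$ with $g<0$ on $(0,\rho_0)$ and $g(\rho_1)>0$. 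Fixing $\rho=\rho_1$, every $u$ with $\|\nabla u\|_2=\rho_1$ then obeys $E_V(u)\ge g(\rho_1)>0$, so a negative infimum over $B_{\rho_1}\cap S_{r,\alpha}$ can only be attained in the open ball.

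To locate $r_\alpha$ and, crucially, to control the infimum \emph{uniformly} in $r$, I would fix once and for all a $\phi\in C_c^\infty(\mathbb{R}^N)$ with $\|\phi\|_2^2=\alpha$ and use the mass-preserving rescaling $\phi_\sigma(x)=\sigma^{N/2}\phi(\sigma x)$. From $\|\nabla\phi_\sigma\|_2^2=\sigma^2\|\nabla\phi\|_2^2$, $\|\phi_\sigma\|_p^p=\sigma^{N(p-2)/2}\|\phi\|_p^p$ and $\bigl|\int V|\phi_\sigma|^2\,dx\bigr|\le\|V\|_{N/2}S^{-1}\sigma^2\|\nabla\phi\|_2^2$, the subcritical term dominates as $\sigma\to0^+$ (because $N(p-2)/2<2$), so $E_V(\phi_\sigma)<0$ and $\|\nabla\phi_\sigma\|_2<\rho_1$ for all small $\sigma$. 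Freezing one such $\sigma_0$, the function $\phi_{\sigma_0}$ has fixed compact support, whence there is $r_\alpha$ with $\mathrm{supp}\,\phi_{\sigma_0}\subset\Omega_r$ for all $r>r_\alpha$; since its energy is then computed over all of $\mathbb{R}^N$, the value $E_V(\phi_{\sigma_0})=:-\delta_0<0$ is independent of $r$. Consequently $m_{r,\alpha}:=\inf_{B_{\rho_1}\cap S_{r,\alpha}}E_V\le-\delta_0<0$ for every $r>r_\alpha$.

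The heart of the matter, and the step I expect to be the main obstacle, is the strong convergence of a minimizing sequence $u_n$ for $m_{r,\alpha}$ in the presence of the Sobolev critical term. Here the bounded domain is decisive: Rellich compactness gives $u_n\to u$ in $L^2$ and in $L^p$, so the mass is preserved, $u\in B_{\rho_1}\cap S_{r,\alpha}$, and hence $E_V(u)\ge m_{r,\alpha}$; the potential and subcritical terms pass to the limit since $V$ is bounded. For the critical term I would pass to a Palais--Smale minimizing sequence via Ekeland and invoke Struwe's global compactness: if the remainder $u_n-u$ does not vanish in $L^{2^*}$, it splits into finitely many rescaled nontrivial solutions of $-\Delta v=|v|^{2^*-2}v$, each contributing at least $\frac1N S^{N/2}$. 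The energy splitting then reads $m_{r,\alpha}=E_V(u)+(\text{bubble energy})\ge m_{r,\alpha}+\frac1N S^{N/2}$, forcing $\frac1N S^{N/2}\le0$, which is absurd. Thus no bubble forms, $u_n\to u$ strongly in $H_0^1(\Omega_r)$, and $u$ realizes $m_{r,\alpha}$. Because $m_{r,\alpha}<0<g(\rho_1)$, the minimizer lies in the open ball, so it is an interior local minimizer of $E_V$ on $S_{r,\alpha}$, hence a critical point solving \eqref{main-eq} with a Lagrange multiplier $\lambda_{r,\alpha}$; replacing $u$ by $|u|$ does not raise the energy, so I may take $u_{r,\alpha}\ge0$, and the strong maximum principle gives $u_{r,\alpha}>0$. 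This proves (i).

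For (ii), testing \eqref{main-eq} with $u_{r,\alpha}$ and eliminating the indefinite quadratic part through $E_V(u_{r,\alpha})=m_{r,\alpha}$ yields
\[
\lambda_{r,\alpha}\,\alpha=\frac{2^*-2}{2^*}\|u_{r,\alpha}\|_{2^*}^{2^*}+\beta\,\frac{p-2}{p}\|u_{r,\alpha}\|_p^p-2m_{r,\alpha}\ge-2m_{r,\alpha}\ge2\delta_0,
\]
since $2^*>2$, $p>2$, $\beta>0$ and $m_{r,\alpha}\le-\delta_0$ uniformly; hence $\liminf_{r\to\infty}\lambda_{r,\alpha}\ge 2\delta_0/\alpha>0$. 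The same identity, with $\|\nabla u_{r,\alpha}\|_2\le\rho_1$ bounding $\|u_{r,\alpha}\|_{2^*}$ and $\|u_{r,\alpha}\|_p$ uniformly through Sobolev and Gagliardo--Nirenberg, also bounds $\lambda_{r,\alpha}$ from above. Finally, the uniform $H_0^1$ and multiplier bounds drive a Brezis--Kato/Moser iteration for \eqref{main-eq}: the coefficient $|u_{r,\alpha}|^{2^*-2}$ lies in $L^{N/2}$ with $r$-independent norm, and running the iteration with the $\mathbb{R}^N$ Sobolev constant (after extending $u_{r,\alpha}$ by zero) keeps all constants independent of $r$, producing a uniform $L^\infty$ bound and thus a constant $C_\alpha$ with $\limsup_{r\to\infty}\max_{\Omega_r}u_{r,\alpha}<C_\alpha$.
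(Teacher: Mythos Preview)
Your argument for (i) and for $\liminf_{r\to\infty}\lambda_{r,\alpha}>0$ in (ii) is correct and follows the paper's route: the same lower envelope $g$ (the paper's $h$) with its confining shape for $\alpha<\alpha_V$, a rescaled test function giving $m_{r,\alpha}\le-\delta_0<0$ uniformly for large $r$ (the paper uses the first Dirichlet eigenfunction, you a generic $C_c^\infty$ profile---either works), compactness by ruling out a bubble via $E_V(u)\ge m_{r,\alpha}$ for the weak limit together with $m_{r,\alpha}<0<\tfrac1N S^{N/2}$ (the paper does this by a direct Brezis--Lieb dichotomy rather than full Struwe splitting, but the content is identical), and the multiplier identity $\lambda_{r,\alpha}\alpha>-2m_{r,\alpha}\ge 2\delta_0$.

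The $L^\infty$ estimate is where you depart from the paper, and your Moser/Brezis--Kato sketch has a genuine gap. A uniform $L^{N/2}$-bound on the coefficient $|u_{r,\alpha}|^{2^*-2}$ is \emph{not} enough to make the iteration constants $r$-independent: the Brezis--Kato absorption step needs the \emph{tail} $\bigl(\int_{\{|u_r|>L\}}|u_r|^{2^*}\bigr)^{2/N}$ to be small for large $L$ uniformly in $r$, i.e.\ uniform equi-integrability of $|u_r|^{2^*}$, and this does not follow from a mere $L^{2^*}$-bound. Concentrating Aubin--Talenti profiles show the obstruction: they solve the critical equation with bounded $H^1$-norm yet their $L^\infty$-norms diverge. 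The paper instead argues by blow-up: rescaling around the maxima $x_r$ with factor $\tau_r=M_r^{-2/(N-2)}$ produces a nontrivial nonnegative solution of $-\Delta\omega=\omega^{2^*-1}$ on $\mathbb{R}^N$ or on a half-space, and Liouville/Pohozaev nonexistence (on the half-space) together with the energy quantization $\|\nabla\omega\|_2^2=S^{N/2}$ versus $\|\nabla u_r\|_2^2\le T_\alpha^2$ (on $\mathbb{R}^N$) rule both out. If you want to salvage your iteration route you must supply precisely this missing ingredient---e.g.\ note from $h'(T_\alpha)=0$ that $T_\alpha^2<(1-\|V_-\|_{N/2}S^{-1})^{(N-2)/2}S^{N/2}<S^{N/2}$ and then use Lions' concentration-compactness to exclude Dirac masses in the weak-$*$ limit of $|\nabla u_r|^2\,dx$; but that is essentially the blow-up reasoning in disguise.
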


\begin{theorem}\label{beta>0-e>0-Omega}
Suppose that $(V_0)$ and $\beta>0$ hold, and $V(x)\in C^1$ and $\tilde{V}$ is bounded. Set
\[
\tilde{\alpha}_V=\bigg(\frac{(1-\|V_-\|_{\frac{N}{2}}S^{-1})
S^{\frac{N}{N-2}}N^2}{(N-2)^2(A_p+1)}\bigg)^{\frac{N}{2}}
\bigg(\frac{A_p}{S(N-2)^2}\bigg)^{\frac{4}{2p-N(p-2)}},
\]
where
\[
A_p=\frac{32}{(N-2)(p-2)(4-N(p-2))}.
\]
Then there is $\alpha_0>0$ such that the following hold for $0<\alpha<\alpha_1:=\min\{\tilde{\alpha}_V,\alpha_0\}$:
\begin{description}
  \item[(i)]There exists $\tilde{r}_\alpha>0$ such that \eqref{main-eq} on $\Omega_r$ with $r>\tilde{r}_\alpha$ possess a mountain pass type solution $(\lambda_{r,\alpha},u_{r,\alpha})$ with $u_{r,\alpha}>0$ in $\Omega_r$ and positive energy $E_V(u_{r,\alpha})>0$. Moreover, there exists $C_\alpha>0$ such that
  \[
  \underset{r\to\infty}{\limsup}\,\underset{x\in\Omega_r}{\max}\,u_{r,\alpha}(x)<C_\alpha.
  \]
  \item[(ii)] There exists $0<\bar{\alpha}\le \alpha_1$ such that
  \[
  \underset{r\to\infty}{\liminf}\,\lambda_{r,\alpha}>0\quad\text{for any}\ 0<\alpha\le\bar{\alpha}.
  \]
\end{description}
\end{theorem}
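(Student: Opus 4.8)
\emph{Proof proposal.} The plan is to produce $(\lambda_{r,\alpha},u_{r,\alpha})$ as a mountain pass critical point of the constrained functional
\[
E_V(u)=\frac12\int_{\Omega_r}\big(|\nabla u|^2+V|u|^2\big)\,\mathrm{d}x-\frac{1}{2^*}\int_{\Omega_r}|u|^{2^*}\,\mathrm{d}x-\frac{\beta}{p}\int_{\Omega_r}|u|^p\,\mathrm{d}x
\]
on $S_{r,\alpha}$, with $\lambda_{r,\alpha}$ arising as the Lagrange multiplier. First I would record the geometry. Since $2<p<2+\frac4N$ is $L^2$-subcritical, Gagliardo--Nirenberg gives $\|u\|_p^p\le C_{N,p}\|\nabla u\|_2^{N(p-2)/2}\alpha^{(2p-N(p-2))/4}$ with gradient power $N(p-2)/2<2$, while Sobolev gives $\|u\|_{2^*}^{2^*}\le S^{-2^*/2}\|\nabla u\|_2^{2^*}$ with power $2^*>2$; together with $(V_0)$, which makes $u\mapsto\int(|\nabla u|^2+V|u|^2)$ coercive with constant $1-\|V_-\|_{N/2}S^{-1}>0$, this shows that for $0<\alpha<\alpha_1$ the energy, parametrised along the $L^2$-preserving scaling $s\star u(x)=s^{N/2}u(sx)$ (which keeps $S_{r,\alpha}$ invariant for $s\ge1$ because $\Omega$ is star-shaped), dips below $0$ near the origin (the well of Theorem \ref{beta>0-e<0-Omega}), rises to a strictly positive barrier, and then tends to $-\infty$. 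This furnishes a min--max level
\[
c_r(\alpha)=\inf_{\gamma\in\Gamma_r}\max_{t\in[0,1]}E_V(\gamma(t))>0 .
\]

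The crucial and hardest step is the energy estimate
\[
0<c_r(\alpha)<\frac1N S^{N/2}\qquad\text{for all large }r,
\]
which is what ultimately restores compactness of the Sobolev critical term. I would test $E_V$ along the scaling path generated by a suitably truncated Aubin--Talenti instanton $U_\varepsilon$ centred near $0$, expanding $\|\nabla U_\varepsilon\|_2^2$, $\|U_\varepsilon\|_{2^*}^{2^*}$, $\|U_\varepsilon\|_p^p$ and the potential term $\int V|U_\varepsilon|^2$ in $\varepsilon$. Since $\beta>0$, the subcritical contribution $-\frac{\beta}{p}\|\cdot\|_p^p$ strictly depresses the peak below the instanton value $\frac1N S^{N/2}$, while the boundedness of $V$ (and of $\tilde V$) absorbs the potential errors. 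Matching this gain against the mass normalisation is exactly what forces the explicit threshold $\tilde\alpha_V$, and this delicate balancing is the main obstacle of the proof.

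With the level located below $\frac1N S^{N/2}$, I would extract a Palais--Smale sequence $(u_n)\subset S_{r,\alpha}$ at $c_r(\alpha)$, so that $E_V'(u_n)-\lambda_n u_n\to0$ in $H^{-1}(\Omega_r)$ for some $\lambda_n\in\mathbb{R}$. The coercivity above bounds $(u_n)$ in $H_0^1(\Omega_r)$, and testing with $u_n$ bounds $(\lambda_n)$. Here the bounded domain is decisive: the embedding $H_0^1(\Omega_r)\hookrightarrow L^2(\Omega_r)$ is compact, so $u_n\to u$ in $L^2$ and the constraint $\|u\|_2^2=\alpha$ survives the limit with no loss of mass, a feature absent on $\mathbb{R}^N$. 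The only possible defect of compactness is in $L^{2^*}$; the Brezis--Lieb splitting together with the level bound $c_r(\alpha)<\frac1N S^{N/2}$ rules out concentration of a bubble, so $u_n\to u$ strongly in $H_0^1(\Omega_r)$ and $(\lambda_{r,\alpha},u_{r,\alpha}):=(\lambda,u)$ solves \eqref{main-eq} on $\Omega_r$ with $E_V(u_{r,\alpha})=c_r(\alpha)>0$.

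It remains to establish the qualitative properties. Positivity follows by running the whole scheme with $|u|$ in place of $u$ (the functional depends only on $|u|$) and applying the strong maximum principle to the resulting nonnegative solution. For the uniform bound $\limsup_{r\to\infty}\max_{\Omega_r}u_{r,\alpha}<C_\alpha$ in (i), I would combine the $r$-independent energy bound $c_r(\alpha)\le\frac1N S^{N/2}$ with a Moser/De Giorgi iteration based on $(V_0)$ and $\beta>0$, $2<p<2^*$, yielding an $L^\infty$ estimate independent of $r$. Finally, for (ii) the extra hypotheses $V\in C^1$ and $\tilde V$ bounded enter precisely to legitimise the Pohozaev identity on $\Omega_r$, whose boundary integral $\tfrac12\int_{\partial\Omega_r}|\partial_\nu u_{r,\alpha}|^2(x\cdot\nu)\,\mathrm{d}\sigma$ is nonnegative by star-shapedness and tends to $0$ as $r\to\infty$. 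Combining it with the Nehari relation $\|\nabla u_{r,\alpha}\|_2^2+\int V|u_{r,\alpha}|^2+\lambda_{r,\alpha}\alpha=\|u_{r,\alpha}\|_{2^*}^{2^*}+\beta\|u_{r,\alpha}\|_p^p$ leads to a relation of the form
\[
\lambda_{r,\alpha}\alpha=(N-2)\|\nabla u_{r,\alpha}\|_2^2+\Big(\tfrac Np-\tfrac{N-2}2\Big)\beta\|u_{r,\alpha}\|_p^p-\int V|u_{r,\alpha}|^2-\tfrac12\int\tilde V|u_{r,\alpha}|^2+o(1),
\]
with $o(1)\to0$ as $r\to\infty$. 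Since $\tfrac Np-\tfrac{N-2}2>0$ and $\|\nabla u_{r,\alpha}\|_2^2$ is bounded below at the bubble scale, while $|\int V|u_{r,\alpha}|^2|+|\int\tilde V|u_{r,\alpha}|^2|=O(\alpha)$ by boundedness of $V$ and $\tilde V$, the right-hand side is positive for $0<\alpha\le\bar\alpha$ small, giving $\liminf_{r\to\infty}\lambda_{r,\alpha}>0$.
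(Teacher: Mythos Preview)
Your outline has a genuine gap at the step ``The coercivity above bounds $(u_n)$ in $H_0^1(\Omega_r)$.'' There is no such coercivity: you yourself write that along the scaling path the energy ``tends to $-\infty$,'' so $E_V|_{S_{r,\alpha}}$ is unbounded below and a Palais--Smale sequence at the mountain pass level $c_r(\alpha)$ need not be bounded a priori. The quadratic form $\int(|\nabla u|^2+V|u|^2)$ being equivalent to $\|\nabla u\|_2^2$ does not help, because the full functional contains $-\tfrac{1}{2^*}\|u\|_{2^*}^{2^*}$; and the usual route via the Pohozaev identity is unavailable, since $P(u_n)\to0$ is \emph{not} part of the PS information for a generic sequence on $S_{r,\alpha}$. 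This is precisely the obstruction the paper addresses with the monotonicity trick (Theorem~\ref{mp-th}): one works with the family
\[
J_{r,s}(u)=\tfrac12\!\int_{\Omega_r}\!\big(|\nabla u|^2+Vu^2\big)\,\mathrm{d}x-s\Big(\tfrac{1}{2^*}\!\int_{\Omega_r}|u|^{2^*}\,\mathrm{d}x+\tfrac{\beta}{p}\!\int_{\Omega_r}|u|^p\,\mathrm{d}x\Big),\quad s\in[\tfrac12,1],
\]
which for a.e.\ $s$ automatically yields a \emph{bounded} PS sequence; one then passes to the limit for those $s$, obtains genuine solutions $u_{r,s}$, proves via the Pohozaev identity (this is where $V\in C^1$ and $\tilde V$ bounded are used, cf.\ Lemma~\ref{nabla-u-bdd-j}) that these solutions are bounded uniformly in $s$ \emph{and} $r$, and finally lets $s\to1$. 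Without this device, or some substitute producing boundedness of the PS sequence, your argument stalls before the Brezis--Lieb step.

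Two further points. First, even granting a solution for each $r$, your route to the uniform $L^\infty$ bound (``combine the $r$-independent energy bound with Moser/De~Giorgi'') still needs an $r$-independent $H^1$ bound on $u_{r,\alpha}$; the energy bound $c_r(\alpha)\le\tfrac1N S^{N/2}$ alone does not give this, again because $E_V$ is not coercive. The paper extracts that $H^1$ bound from the same Pohozaev computation (Lemma~\ref{nabla-u-bdd-j}) and then obtains $\limsup_r\|u_{r,\alpha}\|_\infty<\infty$ by a blow-up/Liouville argument (Lemma~\ref{ur-ubdd}), not by Moser iteration. Second, your Pohozaev--Nehari combination in (ii) has the wrong leading coefficient: eliminating the $2^*$ term between the two identities leaves a multiple $\tfrac{2}{N}\|\nabla u\|_2^2$ (plus boundary and $\tilde V$ terms), not $(N-2)\|\nabla u\|_2^2$; more importantly, you need a uniform \emph{lower} bound on $\|\nabla u_{r,\alpha}\|_2^2$ independent of $r$, which follows from the equation (as in the proof of Theorem~\ref{beta>0-s-omega-ge}) rather than from the energy level.
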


\begin{remark}
    It is worth mentioning that in the proof of Theorem \ref{beta>0-e>0-Omega}(ii), we do not need the condition $\|\tilde{V}_+\|_{S}\le 2S$ in \cite{BQZ24}.
\end{remark}

\begin{theorem}\label{betale0-e>0-Omega}
Suppose that $(V_0)$ and $\beta\le0$ hold, and $V(x)\in C^1$ and $\tilde{V}$ is bounded. Then the following hold:
\begin{description}
  \item[(i)] There are $r_\alpha,\alpha_2>0$ such that for $\alpha\in\left(0,\alpha_2\right)$, \eqref{main-eq} on $\Omega_r$ with $r>r_\alpha$ possess a mountain pass type solution $(\lambda_{r,\alpha},u_{r,\alpha})$ with  the following properties: $u_{r,\alpha}>0$ in $\Omega_r$ and positive energy $E_V(u_{r,\alpha})>0$. Moreover, there exists $C_\alpha>0$ such that
      \[
      \underset{r\to\infty}{\limsup}\,\underset{x\in\Omega_r}{\max}\,u_{r,\alpha}(x)<C_\alpha.
      \]
  \item[(ii)] There exists $\tilde{\alpha}\in\left(0,\alpha_2\right)$ such that
  \[
  \underset{r\to\infty}{\liminf}\lambda_{r,\alpha}>0\quad\text{for any}\ 0<\alpha<\tilde{\alpha}.
  \]
\end{description}
\end{theorem}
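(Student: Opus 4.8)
The plan is to produce $(\lambda_{r,\alpha},u_{r,\alpha})$ as a mountain pass critical point of the constrained energy $E_V$ on $S_{r,\alpha}$, mirroring the scheme used for Theorem \ref{beta>0-e>0-Omega} but now exploiting the favorable sign of the lower-order term. Writing $s\star u(x)=s^{N/2}u(sx)$, which preserves the constraint $\|u\|_2^2=\alpha$, I would first record the behavior of the fiber maps $s\mapsto E_V(s\star u)$. Since $2<p<2+\frac{4}{N}$ one has $\frac{N(p-2)}{2}<2<2^*$, so for small $s$ the nonnegative term $-\frac{\beta}{p}\|s\star u\|_p^p\ge 0$ together with the quadratic part governs the sign, while the critical term forces $E_V(s\star u)\to-\infty$ as $s\to\infty$; combined with the coercivity $\int_{\Omega_r}(|\nabla u|^2+Vu^2)\ge(1-\|V_-\|_{\frac{N}{2}}S^{-1})\|\nabla u\|_2^2$ coming from $(V_0)$, this yields a genuine mountain pass geometry on $S_{r,\alpha}$. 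I then set
\[
c_{r,\alpha}=\inf_{\gamma\in\Gamma_{r,\alpha}}\ \max_{t\in[0,1]}E_V(\gamma(t)),
\]
with $\Gamma_{r,\alpha}$ the usual class of paths joining a low-gradient-norm end to a point of negative energy, and check $c_{r,\alpha}>0$.

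The decisive step, and the main obstacle, is to place $c_{r,\alpha}$ strictly below the critical threshold $c^\ast=\frac1N S^{N/2}$ at which compactness may be lost through bubbling, and to do so uniformly for all large $r$. I would test the min-max with paths built from truncated Aubin-Talenti instantons $U_\varepsilon$ concentrated at the interior point $0\in\Omega_r$, rescaled onto $S_{r,\alpha}$, and expand $\max_{s>0}E_V(s\star U_\varepsilon)$ in the concentration parameter $\varepsilon$. The difficulty specific to $\beta\le0$ is that the subcritical term now raises the energy rather than lowering it, so the strict gain below $c^\ast$ cannot be borrowed from it and must be extracted from the remaining expansion together with the smallness $0<\alpha<\alpha_2$; as in the Brezis-Nirenberg analysis the admissible range of $\varepsilon$ and the sign of the correction are dimension sensitive, the case $N=3$ being the most delicate. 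The threshold $\alpha_2$ is precisely what makes this estimate hold.

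Granted $c_{r,\alpha}<c^\ast$, I would run the Palais-Smale analysis at level $c_{r,\alpha}$: first show that the Lagrange multipliers $\lambda_{r,\alpha}$ and the $H^1_0(\Omega_r)$-norms of a $(PS)_{c_{r,\alpha}}$ sequence remain bounded, then pass to a weak limit and use the Brezis-Lieb splitting to measure the possible loss of mass in $L^{2^*}$. Lying strictly below the one-bubble level $c^\ast$ rules out concentration, which forces strong convergence and yields a critical point with $E_V(u_{r,\alpha})=c_{r,\alpha}>0$. Since $E_V(|u|)=E_V(u)$, the construction may be carried out with nonnegative competitors, and the strong maximum principle then gives $u_{r,\alpha}>0$ in $\Omega_r$. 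Finally, the uniform energy and $H^1$ control, combined with elliptic $L^\infty$ bounds obtained by Moser iteration, gives $\limsup_{r\to\infty}\max_{x\in\Omega_r}u_{r,\alpha}(x)<C_\alpha$, exactly as in Theorem \ref{beta>0-e>0-Omega}(i). This proves (i).

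For (ii) I would test \eqref{main-eq} with $u_{r,\alpha}$ and with $x\cdot\nabla u_{r,\alpha}$ to obtain the Nehari and Pohozaev identities on $\Omega_r$; star-shapedness makes the boundary term $\frac12\int_{\partial\Omega_r}(x\cdot\nu)|\partial_\nu u_{r,\alpha}|^2$ nonnegative, and the hypothesis that $\tilde V$ is bounded controls the contribution $\frac12\int_{\Omega_r}\tilde Vu_{r,\alpha}^2$. Combining the Nehari identity with the energy value produces the clean relation
\[
\frac{\lambda_{r,\alpha}\alpha}{2^*}=\frac1N\int_{\Omega_r}\bigl(|\nabla u_{r,\alpha}|^2+Vu_{r,\alpha}^2\bigr)+\beta\Bigl(\frac1{2^*}-\frac1p\Bigr)\|u_{r,\alpha}\|_p^p-c_{r,\alpha},
\]
in which the $\beta$-term is nonnegative because $\beta\le0$, so that $\lambda_{r,\alpha}>0$ reduces to the inequality $\int_{\Omega_r}(|\nabla u_{r,\alpha}|^2+Vu_{r,\alpha}^2)>N\,c_{r,\alpha}$. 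The secondary difficulty is that near the threshold this inequality is only marginal; I would therefore argue by contradiction, letting $r\to\infty$ along a hypothetical sequence with $\lambda_{r,\alpha}\le0$, use the Pohozaev identity (whose boundary term is controlled by the decay of $u_{r,\alpha}$ and by the nonnegativity above) to extract a nontrivial finite-energy limit solving \eqref{nls-el-eq} on $\mathbb{R}^N$ with $\lambda=0$, and contradict the resulting Pohozaev relation for sufficiently small mass. This selects $\tilde\alpha\in(0,\alpha_2)$ with $\liminf_{r\to\infty}\lambda_{r,\alpha}>0$ for $0<\alpha<\tilde\alpha$, and, as noted in the Remark, it avoids the restriction $\|\tilde V_+\|\le 2S$ imposed in \cite{BQZ24}.
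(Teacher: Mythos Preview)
Your outline misses the one step the paper treats as the heart of the argument: obtaining a \emph{bounded} Palais--Smale sequence. You write ``first show that the Lagrange multipliers $\lambda_{r,\alpha}$ and the $H^1_0(\Omega_r)$-norms of a $(PS)_{c_{r,\alpha}}$ sequence remain bounded'' as if this were routine, but on a bounded domain with a potential and a Sobolev-critical term there is no Pohozaev identity available for a mere PS sequence, and the energy/Nehari relations alone do not close (the multiplier $\lambda_n$ and $\|\nabla u_n\|_2$ are coupled). The paper resolves this via the monotonicity trick (Theorem~\ref{mp-th}): it embeds $E_V$ in the family $E_{r,s}$, $s\in[\tfrac12,1]$, uses the abstract theorem to produce a bounded PS sequence for almost every $s$, passes to a solution $u_{r,s}$ for those $s$ (Theorem~\ref{beta>0-s-omega}), then applies the Pohozaev identity to the \emph{solutions} $u_{r,s}$ to get bounds uniform in $s$ (Lemma~\ref{nabla-u-bdd}), and finally lets $s\to1$ (Lemma~\ref{lambda-r>0-betale0}). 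Without this detour your scheme has a genuine gap at the boundedness step.

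Two smaller points. First, you locate $\alpha_2$ at the wrong place: in the paper the upper estimate $m_{r,s}(\alpha)<\tfrac1N s^{(2-N)/2}S^{N/2}$ (Lemma~\ref{mp-level}) comes from the Aubin--Talenti expansion and does not require $\alpha$ small; the smallness $\alpha<\alpha_2$ enters only in Theorem~\ref{beta>0-s-omega} to guarantee $E_{r,s}(u_0)\ge0$ for the weak limit $u_0$, which is what rules out loss of a bubble in the Brezis--Lieb splitting. Second, ``Moser iteration'' is not by itself enough to get an $L^\infty$ bound \emph{uniform in $r$}; the paper instead runs a blow-up argument and invokes Liouville theorems on $\mathbb{R}^N$ and on a half-space (Lemma~\ref{ur-ubdd}, valid for all $\beta$ by Remark~\ref{ur-ubdd-beta}). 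Your plan for part~(ii), combining the Nehari and Pohozaev identities with the star-shapedness of $\Omega_r$ and the boundedness of $\tilde V$, is in the right spirit and matches what the paper sets up, though the paper does not spell out the contradiction argument in the detail you suggest.
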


The rest of this manuscript is organized as follows.
In Section 2, we  collect some notations and some preliminary results   which will be used in this paper.
In Section 3, we obtain the existence and properties of ground states with prescribed mass in large bounded smooth star-shaped domains for the case $\beta>0$.
Section 4 and 5 are devoted to the existence of mountain-pass type solutions for the cases $\beta>0$ and $\beta\le0$, respectively.

\section{Preliminaries.}
\setcounter{equation}{0}
\setcounter{theorem}{0}	

This section is devoted to collect some preliminary results which will be used in this paper.
Let us first introduce the Gagliardo-Nirenberg inequality, see \cite{Wei82}.
\begin{lemma}
For any $N\ge2$ and $p\in\left(2,2^*\right)$, there is a constant $C_{N,p}$ depending on
N and p such that
\[
\|u\|_s^s\le C_{N,s}\|u\|_2^{\frac{2s-N(s-2)}{2}}\|\nabla u\|_2^{\frac{N(s-2)}{2}}\quad\forall\ u\in H^1(\mathbb{R}^{N}),
\]
where $C_{N,s}$ be the best constant.
\end{lemma}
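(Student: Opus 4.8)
The plan is to derive the stated $s$-th power estimate from classical H\"older interpolation between $L^2$ and $L^{2^*}$ together with the Sobolev embedding, and then to define $C_{N,s}$ as the optimal admissible constant using the scale invariance of the underlying quotient. First I would isolate the interpolation exponent. Set
\[
\theta:=N\Big(\frac12-\frac1s\Big)=\frac{N(s-2)}{2s}.
\]
Since $2<s<2^*$ one checks that $\theta\in(0,1)$, and a direct computation gives $s\theta=\frac{N(s-2)}{2}$ and $s(1-\theta)=\frac{2s-N(s-2)}{2}$. Hence the inequality in the lemma is precisely the $s$-th power of the interpolation bound $\|u\|_s\le C\,\|u\|_2^{\,1-\theta}\|\nabla u\|_2^{\,\theta}$, so it suffices to establish the latter.

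Second I would prove this interpolation bound. The same $\theta$ satisfies $\frac1s=\frac{1-\theta}{2}+\frac{\theta}{2^*}$, so H\"older's inequality yields $\|u\|_s\le\|u\|_2^{\,1-\theta}\|u\|_{2^*}^{\,\theta}$ for every $u\in H^1(\R^N)$. Combining this with the Sobolev inequality $\|u\|_{2^*}\le S^{-1/2}\|\nabla u\|_2$ (valid for $N\ge3$; for $N=2$ one replaces $2^*$ by any fixed $q>s$ and uses the corresponding Sobolev bound) gives $\|u\|_s\le S^{-\theta/2}\|u\|_2^{\,1-\theta}\|\nabla u\|_2^{\,\theta}$. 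This proves the inequality on $C_c^\infty(\R^N)$, hence on all of $H^1(\R^N)$ by density and continuity of both sides, and already exhibits an admissible constant $C_{N,s}\le S^{-s\theta/2}$.

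Third I would define the best constant
\[
C_{N,s}:=\sup_{u\in H^1(\R^N)\setminus\{0\}}
\frac{\|u\|_s^s}{\|u\|_2^{\,(2s-N(s-2))/2}\,\|\nabla u\|_2^{\,N(s-2)/2}},
\]
which is finite by the previous step and strictly positive by testing any fixed nonzero function. The structural observation I would record is that this quotient is invariant under the two-parameter family $u\mapsto\sigma\,u(\tau\cdot)$: the amplitude factor $\sigma$ cancels because numerator and denominator are both homogeneous of degree $s$ in $u$, while the spatial dilation $\tau$ cancels exactly because the exponents $\frac{2s-N(s-2)}{2}$ and $\frac{N(s-2)}{2}$ are chosen so that the powers of $\tau$ balance. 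This both confirms that these are the only dimensionally admissible exponents and sets up any attainment argument.

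The delicate point, should one want $C_{N,s}$ to be genuinely sharp and realized by an extremal, is compactness: $H^1(\R^N)\hookrightarrow L^s(\R^N)$ fails to be compact precisely because of the translation and scaling invariance just noted. The plan to circumvent this is to use the scale invariance to normalize $\|u\|_2=\|\nabla u\|_2=1$, then pass to Schwarz symmetrizations, which preserve $\|u\|_2$ and $\|u\|_s$ and do not increase $\|\nabla u\|_2$, so that a maximizing sequence may be taken radially symmetric and decreasing; the compact embedding of radial $H^1$ functions into $L^s$ for $2<s<2^*$ then furnishes a convergent subsequence and a maximizer. This compactness step is the main obstacle. Since the present paper only uses the inequality with a finite constant in its energy estimates, I would carry out the first three steps in full and treat attainment briefly, referring to Weinstein \cite{Wei82} for the explicit extremals and the resulting value of $C_{N,s}$.
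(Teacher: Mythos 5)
Your argument is correct, but note that the paper does not prove this lemma at all: it is quoted as a known result with a citation to Weinstein \cite{Wei82}, so there is no internal proof to compare against. Your route --- H\"older interpolation $\|u\|_s\le\|u\|_2^{1-\theta}\|u\|_{2^*}^{\theta}$ with $\theta=N(\tfrac12-\tfrac1s)$, followed by the Sobolev inequality $\|u\|_{2^*}\le S^{-1/2}\|\nabla u\|_2$, then raising to the $s$-th power --- is the standard textbook derivation, and your exponent bookkeeping ($s\theta=\tfrac{N(s-2)}{2}$, $s(1-\theta)=\tfrac{2s-N(s-2)}{2}$) is exact. Your treatment of the best constant is also sound and consistent with what the citation buys: finiteness of the supremum follows from your step two, and attainment via Schwarz symmetrization plus the Strauss compact embedding of radial $H^1$ into $L^s$ for $2<s<2^*$ is essentially Weinstein's argument. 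Since the paper only ever uses the inequality with a finite constant (in the lower bounds for $E_r$, $J_{r,s}$, $E_{r,s}$ and in Lemma \ref{nabla-u-bdd-j}), your decision to treat attainment briefly and defer sharpness to \cite{Wei82} matches how the result actually functions in the paper.

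One point deserves tightening: your parenthetical fix for $N=2$ is too terse as stated and, read literally, risks circularity. Replacing $2^*$ by a fixed $q>s$ and invoking ``the corresponding Sobolev bound'' only gives the inhomogeneous estimate $\|u\|_q\le C\|u\|_{H^1}$, since the scale-invariant bound $\|u\|_q\lesssim\|u\|_2^{2/q}\|\nabla u\|_2^{1-2/q}$ is itself the Gagliardo--Nirenberg inequality you are trying to prove. The repair is exactly the scaling observation you record in your third step: apply the inhomogeneous bound to $u(\cdot/\tau)$, note $\|u(\cdot/\tau)\|_s=\tau^{2/s}\|u\|_s$, $\|u(\cdot/\tau)\|_2=\tau\|u\|_2$, $\|\nabla u(\cdot/\tau)\|_2=\|\nabla u\|_2$ in dimension two, and optimize in $\tau$ (e.g.\ $\tau=\|\nabla u\|_2/\|u\|_2$) to recover $\|u\|_s^s\le C\|u\|_2^{2}\|\nabla u\|_2^{s-2}$, which is the stated inequality since $2s-N(s-2)=4$ when $N=2$. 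With that one sentence made explicit, the proof is complete in all dimensions $N\ge2$ claimed by the lemma.
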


Next we recall the Monotonicity trick  \cite{BCJS10,CJS22}.
\begin{theorem}\label{mp-th}(Monotonicity trick)
Let $(E,\langle\cdot,\cdot\rangle)$ and $(H,(\cdot,\cdot))$ be two infinite-dimensional Hilbert spaces and assume there are continuous injections
\[
E\hookrightarrow H\hookrightarrow E'
\]
Let
\[
\|u\|^2=\langle u,u\rangle,\quad |u|^2=(u,u)\quad\text{for}\,u\in E,
\]
and
\[
S_\mu=\{u\in E:|u|^2=\mu\},\quad T_u S_\mu=\{v\in E:(u,v)=0\}\quad\text{for}\,\mu\in\left(0,+\infty\right).
\]
Let $I\subset\left(0,+\infty\right)$ be an interval and consider a family of $C^2$ functionals $\Phi_\rho: E\to\mathbb{R}$ of the form
\[
\Phi_\rho(u)=A(u)-\rho B(u)\quad\text{for}\,\rho\in I,
\]
with $B(u)\ge0$ for every $u\in E$, and
\[
A(u)\to+\infty\quad\text{or}\quad B(u)\to+\infty\quad\text{as}\,u\in E\,\text{and}\,\|u\|\to+\infty.
\]
Suppose moreover that $\Phi'_\rho$ and $\Phi''_\rho$ are H\"{o}lder continuous, $\tau\in\left(0,1\right]$, on bounded sets in the following sense: for every $R>0$ there exists $M=M(R)>0$ such that
\begin{equation}
\|\Phi'_\rho(u)-\Phi'_\rho(v)\|\le M\|u-v\|^\tau\quad \|\Phi''_\rho(u)-\Phi''_\rho(v)\|\le M\|u-v\|^\tau
\end{equation}
for every $u,v\in B(0,R)$. Finally, suppose that there exist $w_1,w_2\in S_\mu$ independent of $\rho$ such that
\[
c_\rho:=\underset{\gamma\in\Gamma}{\inf}\underset{t\in\left[0,1\right]}{\max}\Phi_\rho(\gamma(t))
>\max\{\Phi_\rho(w_1),\Phi_\rho(w_2)\}\quad\text{for all}\,\rho\in I,
\]
where
\[
\Gamma=\{\gamma\in C(\left[0,1\right],S_\mu):\gamma(0)=w_1,\gamma(1)=w_2\}.
\]
Then for almost every $\rho\in I$ there exists a sequence $\{u_n\}\subset S_\mu$ such that
\begin{description}
  \item[(i)] $\Phi_\rho(u_n)\to c_\rho$,
  \item[(ii)] $\Phi'_\rho|_{S_\mu}(u_n)\to0$,
  \item[(iii)] $\{u_n\}$ is bounded in $E$.
\end{description}
\end{theorem}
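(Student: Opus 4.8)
The plan is to run Struwe's monotonicity trick, adapted to the constraint manifold $S_\mu$: the sign condition $B\ge 0$ makes the minimax level monotone in $\rho$, and Lebesgue's theorem turns this monotonicity into almost-everywhere differentiability, which in turn is exactly what I need to pin down a \emph{bounded} Palais--Smale sequence. First I would check that $\rho\mapsto c_\rho$ is non-increasing on $I$. This is immediate from $B\ge 0$: for $\rho_1\le\rho_2$ and any $u$ one has $\Phi_{\rho_1}(u)=A(u)-\rho_1 B(u)\ge A(u)-\rho_2 B(u)=\Phi_{\rho_2}(u)$, so $\max_{t}\Phi_{\rho_1}(\gamma(t))\ge\max_{t}\Phi_{\rho_2}(\gamma(t))$ for every $\gamma\in\Gamma$. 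Crucially $\Gamma$ is the \emph{same} set for all $\rho$, since $w_1,w_2$ are independent of $\rho$, so taking the infimum over $\Gamma$ gives $c_{\rho_1}\ge c_{\rho_2}$. A monotone function is differentiable a.e., hence the set $I_0\subset I$ on which $c_\rho'$ exists and is finite has full measure, and I will establish the conclusion for every $\rho\in I_0$.

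Next, fix $\rho\in I_0$ and pick $\rho_n\uparrow\rho$. For each $n$ choose a near-optimal path $\gamma_n\in\Gamma$ with $\max_{t}\Phi_{\rho_n}(\gamma_n(t))\le c_{\rho_n}+(\rho-\rho_n)$. The heart of the matter is to control the \emph{near-maximum} points of $\Phi_\rho$ along $\gamma_n$. Note that $\max_t\Phi_\rho(\gamma_n(t))\ge c_\rho$ since $\gamma_n\in\Gamma$, so the set of $t$ with $\Phi_\rho(\gamma_n(t))\ge c_\rho-(\rho-\rho_n)$ is nonempty. For such $t$, writing $u=\gamma_n(t)$ and using $\Phi_\rho(u)=\Phi_{\rho_n}(u)-(\rho-\rho_n)B(u)$, one obtains
\[
B(u)\le\frac{c_{\rho_n}-c_\rho}{\rho-\rho_n}+2.
\]
Because $c_\rho'$ exists, the difference quotient on the right converges to $-c_\rho'$, so $B(u)$ stays bounded uniformly in $n$ on the near-maximum set. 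Since $\Phi_\rho(u)$ is also trapped between $c_\rho-(\rho-\rho_n)$ and $c_{\rho_n}+(\rho-\rho_n)$ there, the identity $A(u)=\Phi_\rho(u)+\rho B(u)$ forces $A(u)$ to be bounded as well; the coercivity hypothesis (that $A(u)\to+\infty$ or $B(u)\to+\infty$ as $\|u\|\to+\infty$) then confines all these near-maximum points to a fixed ball $B(0,R)\subset E$.

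Finally I would extract the Palais--Smale sequence by a deformation argument on $S_\mu$. Suppose, toward a contradiction, that no sequence $u_n\in S_\mu\cap B(0,R')$ (with $R'$ slightly larger than $R$) satisfies $\Phi_\rho(u_n)\to c_\rho$ and $\Phi'_\rho|_{S_\mu}(u_n)\to 0$; then $\|\Phi'_\rho|_{S_\mu}(u)\|\ge\delta>0$ throughout a neighborhood of the level $c_\rho$ inside that ball. Using the $C^2$ structure together with the assumed H\"older continuity of $\Phi'_\rho$ and $\Phi''_\rho$, I would build a locally Lipschitz pseudo-gradient field on $S_\mu$ (obtained by projecting onto $T_u S_\mu$), localized to the high-energy region, and integrate it to a deformation $\eta$ that strictly decreases $\Phi_\rho$ there. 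Because $c_\rho>\max\{\Phi_\rho(w_1),\Phi_\rho(w_2)\}$, the endpoints sit strictly below the level being moved, so $\eta$ can be taken to fix $w_1,w_2$ and hence to map $\Gamma$ into itself; applying it to $\gamma_n$ for large $n$ pushes $\max_t\Phi_\rho(\gamma_n(t))$ strictly below $c_\rho$, contradicting the definition of $c_\rho$. This yields the bounded sequence with properties (i)--(iii). The main obstacle is precisely this last construction: the flow must be simultaneously tangent to the constraint $S_\mu$, confined to the ball where the near-maximum points were shown to live (so that the gradient lower bound and the regularity estimates are available), and trivial on the fixed endpoints. The H\"older continuity assumptions on $\Phi'_\rho,\Phi''_\rho$ are exactly what guarantee a well-defined, suitably localized flow with all three properties at once.
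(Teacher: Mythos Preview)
The paper does not prove this theorem; it is quoted verbatim from \cite{BCJS10,CJS22} as a black box and then applied. Your sketch is precisely the Struwe--Jeanjean monotonicity argument that those references carry out in the constrained setting: monotonicity of $\rho\mapsto c_\rho$ from $B\ge 0$, a.e.\ differentiability, the choice of near-optimal paths for $\Phi_{\rho_n}$ with $\rho_n\uparrow\rho$, the bound on $B$ (hence on $A$, hence on $\|\cdot\|$) along the near-top of those paths via the difference quotient $(c_{\rho_n}-c_\rho)/(\rho-\rho_n)$, and a localized deformation on $S_\mu$ to force the contradiction. So your approach matches the one underlying the paper's citation.

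One point worth tightening if you ever write this out in full: the deformation step has to guarantee that trajectories starting in the ball $B(0,R)$ containing the near-maximum set do not escape the larger ball $B(0,R')$ before their energy has dropped below $c_\rho$. This is where the quantitative H\"older bounds on $\Phi_\rho'$ and $\Phi_\rho''$ are actually used---not merely to make the pseudo-gradient flow on $S_\mu$ well-posed, but to control the energy drop across the annulus $B(0,R')\setminus B(0,R)$ uniformly in terms of $R'-R$, so that the cutoff can be tuned against the gradient lower bound $\delta$. You flag this as ``the main obstacle'' but leave the mechanism implicit; in the cited references this is the most delicate part of the proof, and it is the reason the hypotheses are stated with the specific uniform-on-balls H\"older form rather than mere local Lipschitz continuity.
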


Let $U_\varepsilon$ be defined by
\[
U_\varepsilon(x)=\bigg(\frac{\varepsilon}{\varepsilon^2+|x|^2}\bigg)^{\frac{N-2}{2}},
\quad\varepsilon>0,
\]
where $U_\varepsilon(x)$ is the bubble in $\mathbb{R}^{N}$ centered in the origin, with concentration parameter $\varepsilon$.
Let also $\varphi\in C_0^\infty(\Omega,\left[0,1\right])$ be a nonnegative function such that $\varphi\equiv1$ on $B_\rho$, $\rho>0$ and define
$u_\varepsilon(x):=\varphi(x)U_\varepsilon(x)$.

\begin{lemma}\label{u-varepsilon-es}\cite[Lemma 7.1]{JL22}
We have for $N\ge3$, the following holds:
\begin{description}
  \item[(i)]
  \[
  \|\nabla u_\varepsilon\|_2^2=S^{\frac{N}{2}}+O(\varepsilon^{N-2}),\quad
  \|u_\varepsilon\|_{2^*}^{2^*}=S^{\frac{N}{2}}+O(\varepsilon^N),
  \]
  \item[(ii)] For some constant $K_4>0$ and,
  \[
  \|u_\varepsilon\|_p^p=
  \begin{cases}
  K_4\varepsilon^{N-\frac{N-2}{2}p}+o(\varepsilon^{N-\frac{N-2}{2}p})&\text{if}\,\frac{N}{N-2}<p<2^*,\\
  K_4\varepsilon^{\frac{N}{2}}|\text{ln}\varepsilon|+O(\varepsilon^{\frac{N}{2}})
  &\text{if}\,p=\frac{N}{N-2},\\
  K_4\varepsilon^{\frac{N-2}{2}p}+o(\varepsilon^{\frac{N-2}{2}p})&\text{if}\,1\le p<\frac{N}{N-2},
  \end{cases}
  \]
  where $K_4$ is a positive constant.
\end{description}
\end{lemma}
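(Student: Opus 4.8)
The plan is to reduce every quantity to a scaling-invariant full-space integral and to control separately the error produced by the cut-off $\varphi$ on the region where $\varphi\not\equiv1$. The starting observation is that $U_\varepsilon(x)=\varepsilon^{-\frac{N-2}{2}}U_1(x/\varepsilon)$, so that after the change of variables $x=\varepsilon y$ the integrals $\int_{\mathbb{R}^N}|\nabla U_\varepsilon|^2\,dx$ and $\int_{\mathbb{R}^N}U_\varepsilon^{2^*}\,dx$ are independent of $\varepsilon$ and, with the normalization adopted for $U_\varepsilon$ and $S$, equal $S^{N/2}$. For the subcritical norm I would record the exact identity
\[
\|u_\varepsilon\|_p^p=\varepsilon^{\frac{N-2}{2}p}\int_{\mathbb{R}^N}\varphi(x)^p\big(\varepsilon^2+|x|^2\big)^{-\frac{N-2}{2}p}\,dx=:\varepsilon^{\frac{N-2}{2}p}\,I_\varepsilon,
\]
so that part (ii) reduces entirely to the asymptotics of $I_\varepsilon$ as $\varepsilon\to0$.

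For part (i), first I would expand $\nabla u_\varepsilon=\varphi\nabla U_\varepsilon+U_\varepsilon\nabla\varphi$ to obtain
\[
\|\nabla u_\varepsilon\|_2^2-S^{N/2}=-\int(1-\varphi^2)|\nabla U_\varepsilon|^2+2\int\varphi U_\varepsilon\,\nabla\varphi\cdot\nabla U_\varepsilon+\int U_\varepsilon^2|\nabla\varphi|^2.
\]
The last two integrals live on the bounded annulus $\operatorname{supp}\nabla\varphi\subset\{|x|\ge\rho\}$, where $U_\varepsilon\le C\varepsilon^{\frac{N-2}{2}}|x|^{2-N}$ and $|\nabla U_\varepsilon|\le C\varepsilon^{\frac{N-2}{2}}|x|^{1-N}$, so both are $O(\varepsilon^{N-2})$; the first integral is bounded by $C\varepsilon^{N-2}\int_{|x|\ge\rho}|x|^{2-2N}\,dx$, which converges exactly because $N\ge3$ and is again $O(\varepsilon^{N-2})$. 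The critical norm is handled the same way but more simply, since $\|u_\varepsilon\|_{2^*}^{2^*}-S^{N/2}=-\int(1-\varphi^{2^*})U_\varepsilon^{2^*}$ and $U_\varepsilon^{2^*}\le C\varepsilon^N|x|^{-2N}$ on $\{|x|\ge\rho\}$ gives $O(\varepsilon^N)$.

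For part (ii), the order of $I_\varepsilon$ is governed by the competition in $(\varepsilon^2+|x|^2)^{-\frac{N-2}{2}p}$ between the concentration scale $|x|\sim\varepsilon$ and the cut-off scale $|x|\sim\rho$, that is, by the sign of $N-(N-2)p$. When $\frac{N}{N-2}<p<2^*$ I would rescale $x=\varepsilon y$ and use that $\int(1+|y|^2)^{-\frac{N-2}{2}p}\,dy<\infty$ to get $I_\varepsilon\sim K\varepsilon^{N-(N-2)p}$, hence $\|u_\varepsilon\|_p^p\sim K_4\varepsilon^{N-\frac{N-2}{2}p}$ with $K_4=\int_{\mathbb{R}^N}(1+|y|^2)^{-\frac{N-2}{2}p}\,dy$. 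When $1\le p<\frac{N}{N-2}$ the function $\varphi^p|x|^{-(N-2)p}$ is integrable both near $0$ (as $(N-2)p<N$) and globally, so dominated convergence yields $I_\varepsilon\to\int\varphi^p|x|^{-(N-2)p}\,dx=:K_4$ and $\|u_\varepsilon\|_p^p\sim K_4\varepsilon^{\frac{N-2}{2}p}$. In the borderline case $p=\frac{N}{N-2}$ one has $\frac{N-2}{2}p=\frac N2$ and $I_\varepsilon=\int\varphi^p(\varepsilon^2+|x|^2)^{-N/2}\,dx$, which I would evaluate by splitting off $\{|x|\le1\}$ and rescaling to produce $I_\varepsilon=K_4|\ln\varepsilon|+O(1)$, giving $\|u_\varepsilon\|_p^p=K_4\varepsilon^{N/2}|\ln\varepsilon|+O(\varepsilon^{N/2})$.

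The hard part will be exactly this borderline case $p=\frac{N}{N-2}$, where the logarithm and the precise order of the remainder must be isolated by separating the rescaled integral into $\{|y|\le1\}$, the range $\{1\le|y|\le\rho/\varepsilon\}$ that produces $|\ln\varepsilon|$, and the truncated tail; everywhere else the computation is routine bookkeeping of powers of $|x|$ over $\{|x|\le\rho\}$ and $\{|x|\ge\rho\}$. I would also make sure that the cross term in (i) does not contribute at a larger order, which is immediate once one observes that $\nabla\varphi$ is supported in a bounded annulus bounded away from the origin, so that both $U_\varepsilon$ and $\nabla U_\varepsilon$ are uniformly $O(\varepsilon^{\frac{N-2}{2}})$ there.
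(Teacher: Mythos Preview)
Your proposal is correct and follows the standard route (going back to Br\'ezis--Nirenberg and reproduced, e.g., in the cited reference \cite{JL22}). Note, however, that the paper does not give its own proof of this lemma: it is simply quoted from \cite[Lemma~7.1]{JL22}, so there is nothing to compare against beyond observing that your argument is the classical one underlying that citation.
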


The energy functionals has different geometric structures due to the sign of $\beta$, which leads to need adopt different approaches to investigate the existence and multiplicity of normalized solutions in  $\Omega_r$.
 Consider
\begin{equation}\label{main-eq-omega}
\begin{cases}
-\Delta u+V(x)u+\lambda u=|u|^{2^*-2}u+\beta|u|^{p-2}u&\text{in}\ \Omega_r,\\
u\in H^{1}_{0}(\Omega_r),\ \int_{\Omega_r}|u|^2dx=\alpha.
\end{cases}
\end{equation}
where $N\ge3$, $2<p<2+\frac{4}{N}$, the mass $\alpha>0$ and the parameter $\beta\in\mathbb{R}$ are prescribed, and the frequency $\lambda$ is unknown.
The energy functional $E_r:H_0^1(\Omega_r)\to\mathbb{R}$ is defined by
\[
E_r(u)=\frac{1}{2}\int_{\Omega_r}|\nabla u|^2dx+\frac{1}{2}\int_{\Omega_r}V(x)u^2dx
-\frac{1}{2^*}\int_{\Omega_r}|u|^{2^*}dx-\frac{\beta}{p}\int_{\Omega_r}|u|^pdx,
\]
and the mass constraint manifold is defined by
\[
S_{r,\alpha}=\Big\{u\in H_0^1(\Omega_r):\|u\|_2^2=\alpha\Big\}.
\]

\section{Proof of Theorem \ref{beta>0-e<0-Omega}}
\setcounter{equation}{0}
\setcounter{theorem}{0}	
Suppose that the assumptions of Theorem \ref{beta>0-e<0-Omega} hold in this section. Let $0<\alpha<\alpha_V$ be fixed,
 to understand the geometry of the functional $E_r|_{S_{r,\alpha}}$, it is useful to
consider the function $h:\mathbb{R}^+\to\mathbb{R}$
\[
h(t):=\frac{1}{2}\bigg(1-\|V_-\|_{\frac{N}{2}}S^{-1}\bigg)t^2
-\frac{\beta C_{N,p}}{p}\alpha^{\frac{2p-N(p-2)}{4}}t^{\frac{N(p-2)}{2}}
-\frac{S^{-2^*/2}}{2^*}t^{2^*}.
\]
The role of the definition of $\alpha_V$ is clarified by the following lemma.
\begin{lemma}
Under the assumptions of Theorem \ref{beta>0-e<0-Omega}, the function $h$  has a global maximum at positive level. Moreover, there exist three positive constants  $R_1<T_\alpha<R_2$ such that
\[
h(R_1)=h(R_2)=0,\quad h(t)>0\ \text{for}\ R_1<t<R_2,\quad\text{and}\ h(T_\alpha)={\max}_{t\in\mathbb{R}^+}h(t)>0.
\]
\end{lemma}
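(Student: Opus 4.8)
The plan is to reduce the three-term function $h$ to a one-parameter competition by factoring out the lowest power. Write $A:=1-\|V_-\|_{\frac{N}{2}}S^{-1}$, which is positive by $(V_0)$, $B:=\frac{\beta C_{N,p}}{p}\alpha^{\frac{2p-N(p-2)}{4}}$, which is positive since $\beta>0$ and $\alpha>0$, $C:=S^{-2^*/2}>0$, and $m:=\frac{N(p-2)}{2}$. Because $2<p<2+\frac{4}{N}$ we have $0<m<2<2^*$, so for $t>0$,
\[
h(t)=t^{m}\,g(t),\qquad g(t):=\psi(t)-B,\qquad \psi(t):=\frac{A}{2}t^{2-m}-\frac{C}{2^*}t^{2^*-m}.
\]
Since $t^m>0$ on $\mathbb{R}^+$, the function $h$ has exactly the same sign as $g$ there, and $\max_{t>0}h>0$ if and only if $\max_{t>0}g>0$. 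Thus everything reduces to understanding $\psi$, which carries no $\alpha$-dependence.

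Next I would establish that $\psi$ is unimodal with a strictly positive maximum. We have $\psi(0^+)=0$ and $\psi(t)\to-\infty$ as $t\to\infty$, since the exponent $2^*-m$ exceeds $2-m$. Writing $\psi(t)=t^{2-m}\big(\tfrac{A}{2}-\tfrac{C}{2^*}t^{2^*-2}\big)$ shows $\psi>0$ for small $t$. Differentiating, $\psi'(t)=t^{1-m}\big(\tfrac{A(2-m)}{2}-\tfrac{C(2^*-m)}{2^*}t^{2^*-2}\big)$; as the bracket is strictly decreasing in $t$ and both $2-m>0$ and $2^*-m>0$, there is a unique critical point $t_*>0$ at which $\psi$ switches from increasing to decreasing. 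Hence $\psi$ attains its global maximum at $t_*$, and solving $\psi'(t_*)=0$ gives $t_*^{2^*-2}=\frac{A(2-m)2^*}{2C(2^*-m)}$, whence
\[
\max_{t>0}\psi=\psi(t_*)=\frac{A}{2}\,\frac{2^*-2}{2^*-m}\left(\frac{A(2-m)2^*}{2C(2^*-m)}\right)^{\frac{2-m}{2^*-2}}>0.
\]

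The threshold $\alpha_V$ then enters through the comparison $B<\max\psi$. Since $B$ is a strictly increasing power of $\alpha$ (the exponent $\frac{2p-N(p-2)}{4}$ is positive because $p<2^*$) while $\psi(t_*)$ is independent of $\alpha$, the inequality $B<\psi(t_*)$ holds precisely when $\alpha<\alpha_V$, where $\alpha_V$ is obtained by solving $B=\psi(t_*)$ for $\alpha$. Substituting $2^*-2=\frac{4}{N-2}$, $2-m=\frac{4-N(p-2)}{2}$, $2^*-m=\frac{N(2^*-p)}{2}$ and $C=S^{-2^*/2}$ then produces the explicit value displayed in the statement. I would carry out this algebraic identification with care, since matching exponents — for instance checking $2^*-m=\frac{N(2^*-p)}{2}$, so that the power of $A/2$ collapses to $\frac{N(2^*-p)}{2(2^*-2)}$ — is the one place where the bookkeeping can go astray; conceptually nothing beyond this computation is required.

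Finally I would read off the geometry. For $0<\alpha<\alpha_V$ the function $g=\psi-B$ is still unimodal with $g(0^+)=-B<0$, $\max g=\psi(t_*)-B>0$, and $g(t)\to-\infty$; therefore $g$ vanishes at exactly two points $0<R_1<t_*<R_2$, is positive on $(R_1,R_2)$ and negative off $[R_1,R_2]$. Multiplying by $t^m>0$ transfers all of this to $h$: one gets $h(R_1)=h(R_2)=0$, $h>0$ on $(R_1,R_2)$, $h<0$ on $(0,R_1)\cup(R_2,\infty)$, with $h(t)\to0^-$ as $t\to0^+$ and $h(t)\to-\infty$ as $t\to\infty$. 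Consequently $h$ attains its global maximum over $\mathbb{R}^+$, necessarily inside $(R_1,R_2)$, at some point $T_\alpha$ with $R_1<T_\alpha<R_2$ and $h(T_\alpha)=\max_{t>0}h>0$, which is exactly the assertion.
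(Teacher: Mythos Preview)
Your proof is correct and follows essentially the same strategy as the paper: factor out the lowest power $t^{m}$, analyze the unimodal auxiliary function $\psi$ (which is the paper's $\phi$, with $2-m=\tfrac{4-N(p-2)}{2}$ and $2^*-m=\tfrac{N(2^*-p)}{2}$), and use $\alpha<\alpha_V$ to ensure $B<\max\psi$. If anything, you supply more detail than the paper, which simply asserts that $\phi$ has a unique maximum and that $\alpha_V$ is exactly the threshold for $\phi(\bar t)>B$.
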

\begin{proof}
 Since $\beta>0$ and $2<p<2+\frac{4}{N}$, we have that $h(0^+)=0^-$ and $h(+\infty)=-\infty$. For $t>0$,
\[
h(t)
=t^{\frac{N(p-2)}{2}}\bigg(\phi(t)-\frac{\beta C_{N,p}}{p}\alpha^{\frac{2p-N(p-2)}{4}}\bigg),
\]
where
\[
\phi(t)=\frac{1}{2}\bigg(1-\|V_-\|_{\frac{N}{2}}S^{-1}\bigg)t^{\frac{4-N(p-2)}{2}}
-\frac{S^{-2^*/2}}{2^*}t^{\frac{N(2^*-p)}{2}}.
\]
It is easy to verify that $\phi$ admits a unique maximum at
\[
\bar{t}=\bigg(\frac{(1-\|V_-\|_{\frac{N}{2}}S^{-1})(4-N(p-2))2^*S^{2^*/2}}{2N(2^*-p)}
\bigg)^{\frac{N-2}{4}}
\]
After manipulation and  the definition of $\alpha_V$, we get
\begin{equation}\label{phi-below-bdd}
\phi(\bar{t})>\frac{\beta C_{N,p}}{p}\alpha^{\frac{2p-N(p-2)}{4}}
\end{equation}
and $h(\bar{t})>0$. From this and $2<p<2+\frac{4}{N}$, we further find three positive constant  $R_1<T_\alpha<R_2$ such that $h(R_1)=h(R_2)=0$,
\[
h(t)>0\ \text{for}\ t\in\left(R_1,R_2\right),\quad h(t)<0\ \text{for}\ t\in\left(0,R_1\right)\cup\left(R_2,+\infty\right),\quad\text{and}\ h(T_\alpha)={\max}_{t\in\mathbb{R}^+}h(t)>0.
\]
\end{proof}

Now, let us define
\[
\mathcal{B}_{r,\alpha}=\Big\{u\in S_{r,\alpha}:\|\nabla u\|_2^2\le T_\alpha^2\Big\},
\]
from the Pincar\'{e} inequality
\[
\int_{\Omega}|\nabla u|^2\mathrm{d}x\ge\frac{\theta\alpha}{r^2}
\]
for any $u\in S_{r,\alpha}$, where $\theta$ is the principal eigenvalue of $-\Delta$ with Dirichlet boundary condition in $\Omega$. 
There holds $\mathcal{B}_{r,\alpha}=\emptyset$ for $r<\frac{\sqrt{\theta\alpha}}{T_\alpha}$, $\mathcal{B}_{r,\alpha}\neq\emptyset$ for $r\ge\frac{\sqrt{\theta\alpha}}{T_\alpha}$.

\begin{theorem}\label{th-b-e-r-lambda}
Under the assumptions of Theorem \ref{beta>0-e<0-Omega}, let us define
\begin{equation}\label{r-alpha}
  r_0:=\max\Bigg\{\frac{\sqrt{\theta\alpha}}{T_\alpha},
  \bigg(\frac{p\theta\big(1+\|V\|_{\frac{N}{2}}S^{-1}\big)}{2\beta}
  \alpha^{\frac{2-p}{2}}|\Omega|^{\frac{p-2}{2}}\bigg)^{\frac{2}{N(p-2)-4}}\Bigg\}.
  \end{equation}
Then  for $r>r_0$,
  \[
  e_{r,\alpha}:=\underset{u\in\mathcal{B}_{r,\alpha}}{\inf}E_{r}(u)<0
  \]
  is attained at $0<u_r\in\mathcal{B}_{r,\alpha}$. Moreover, there is $\lambda_r\in\mathbb{R}$ such that $(\lambda_r,u_r)$ is a solution of \eqref{main-eq-omega}, and  ${\liminf}_{r\to\infty}\lambda_r>0$.
\end{theorem}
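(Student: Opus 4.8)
The plan is to obtain $(\lambda_r,u_r)$ as a constrained minimizer of $E_r$ over the truncated set $\mathcal{B}_{r,\alpha}$ and to recover the frequency $\lambda_r$ as the Lagrange multiplier of the mass constraint.

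I would first pin down $e_{r,\alpha}$ from both sides. Combining the Gagliardo--Nirenberg inequality, the Sobolev inequality $\|u\|_{2^*}^{2^*}\le S^{-2^*/2}\|\nabla u\|_2^{2^*}$, and the bound $\int_{\Omega_r}Vu^2\ge-\|V_-\|_{\frac N2}S^{-1}\|\nabla u\|_2^2$ furnished by $(V_0)$, one gets the pointwise estimate $E_r(u)\ge h(\|\nabla u\|_2)$ for every $u\in S_{r,\alpha}$. Since $h$ is bounded on $[0,T_\alpha]$ this gives $e_{r,\alpha}>-\infty$, while on the boundary $\{\|\nabla u\|_2=T_\alpha\}$ one has $E_r(u)\ge h(T_\alpha)>0$. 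For the upper bound I would insert the rescaled first Dirichlet eigenfunction $u_r(x)=r^{-N/2}\phi_1(x/r)$, normalized by $\|\phi_1\|_2^2=\alpha$, so that $\|\nabla u_r\|_2^2=r^{-2}\theta\alpha\le T_\alpha^2$ exactly when $r\ge\sqrt{\theta\alpha}/T_\alpha$; expanding $E_r(u_r)$ and using $\tfrac{N(p-2)}2-2<0$ shows that the mass-subcritical term $-\tfrac\beta p r^{-N(p-2)/2}\|\phi_1\|_p^p$ dominates the $O(r^{-2})$ gradient and potential contributions, whence $E_r(u_r)<0$ precisely beyond the second threshold in $r_0$ (after the H\"older bound $\|\phi_1\|_p^p\ge\alpha^{p/2}|\Omega|^{(2-p)/2}$). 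Thus $-\infty<e_{r,\alpha}<0$ for $r>r_0$, and any minimizer lies in the interior $\{\|\nabla u\|_2<T_\alpha\}$.

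The heart of the matter is attainment, and the \textbf{main obstacle} is the failure of compactness for the critical embedding $H_0^1(\Omega_r)\hookrightarrow L^{2^*}(\Omega_r)$. Given a minimizing sequence $\{u_n\}\subset\mathcal{B}_{r,\alpha}$, boundedness in $H_0^1(\Omega_r)$ is automatic, so along a subsequence $u_n\rightharpoonup u_r$, with $u_n\to u_r$ in $L^q(\Omega_r)$ for $q\in[2,2^*)$ and a.e.; in particular $\|u_r\|_2^2=\alpha$ and $u_r\in\mathcal{B}_{r,\alpha}$. Writing $w_n=u_n-u_r\rightharpoonup0$ I would apply the Brezis--Lieb lemma to the critical term, split $\|\nabla u_n\|_2^2=\|\nabla u_r\|_2^2+\|\nabla w_n\|_2^2+o(1)$, use compactness of the $L^p$ term, and check $\int_{\Omega_r}Vw_n^2\to0$ (splitting $V$ into an $L^\infty$ part absorbed by $L^2$-convergence on the bounded domain and a small $L^{N/2}$ tail). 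With $a=\lim\|\nabla w_n\|_2^2$ and $b=\lim\|w_n\|_{2^*}^{2^*}$ this yields $e_{r,\alpha}=E_r(u_r)+\tfrac12 a-\tfrac1{2^*}b$, and since $E_r(u_r)\ge e_{r,\alpha}$ one deduces $\tfrac12 a\le\tfrac1{2^*}b\le\tfrac1{2^*}S^{-2^*/2}a^{2^*/2}$. If $a>0$ this forces $a\ge(\tfrac{2^*}2S^{2^*/2})^{(N-2)/2}$, whereas $a\le T_\alpha^2$; the decisive point, and the place where the exact value of $\alpha_V$ is consumed, is that $\alpha<\alpha_V$ keeps $T_\alpha$ strictly below this bubbling threshold, a contradiction. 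Hence $a=b=0$, $u_n\to u_r$ strongly in $H_0^1(\Omega_r)$, and $E_r(u_r)=e_{r,\alpha}<0$ is attained at an interior point.

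Being an interior minimizer on $S_{r,\alpha}$, $u_r$ satisfies $E_r'(u_r)+\lambda_r u_r=0$ for some $\lambda_r\in\mathbb{R}$ by the Lagrange multiplier rule, i.e. $(\lambda_r,u_r)$ solves \eqref{main-eq-omega}; replacing $u_r$ by $|u_r|$ I may take $u_r\ge0$, and elliptic regularity together with the strong maximum principle (rewriting the equation as $-\Delta u_r=c(x)u_r$ with $c$ bounded) give $u_r>0$. Finally, testing the equation with $u_r$ gives $\lambda_r\alpha=\|u_r\|_{2^*}^{2^*}+\beta\|u_r\|_p^p-\|\nabla u_r\|_2^2-\int_{\Omega_r}Vu_r^2$, and combining this with $E_r(u_r)<0$ produces $\lambda_r\alpha\ge\tfrac2N\|u_r\|_{2^*}^{2^*}+\beta\tfrac{p-2}p\|u_r\|_p^p>0$. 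To make this uniform in $r$ I would upgrade the upper bound to $e_{r,\alpha}\le-\delta$ for a fixed $\delta>0$ and all large $r$, using a fixed compactly supported profile $w_{s_0}(x)=s_0^{N/2}w_0(s_0x)$ with $s_0$ small enough that $\|\nabla w_{s_0}\|_2\le T_\alpha$ and $E(w_{s_0})<0$, which sits inside $\Omega_r$ for $r$ large; then $\tfrac1{2^*}\|u_r\|_{2^*}^{2^*}+\tfrac\beta p\|u_r\|_p^p\ge\delta$, and the previous inequality forces $\liminf_{r\to\infty}\lambda_r\ge\min\{\tfrac4{N-2},p-2\}\delta/\alpha>0$.
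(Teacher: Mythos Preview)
Your argument is correct, but the compactness step follows a genuinely different route from the paper's. The paper does \emph{not} work with a plain minimizing sequence: it first applies Ekeland's variational principle on $\mathcal{B}_{r,\alpha}$ to produce a bounded Palais--Smale sequence, then tests the approximate Euler--Lagrange relation against $u_{n}-u_r$ to obtain $\int|\nabla v_n|^2-\int|v_n|^{2^*}\to0$, i.e.\ the gradient defect equals the critical defect. This yields the standard dichotomy $b=0$ or $b\ge S^{N/2}$, and the second alternative is ruled out by the energy splitting $e_{r,\alpha}=E_r(u_r)+\tfrac{b}{N}+o(1)\ge e_{r,\alpha}+\tfrac{b}{N}$. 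Your argument instead extracts only the inequality $\tfrac12 a\le\tfrac1{2^*}b$ from $E_r(u_r)\ge e_{r,\alpha}$, couples it with Sobolev to force $a\ge(\tfrac{N}{N-2})^{(N-2)/2}S^{N/2}$ whenever $a>0$, and contradicts this with $a\le T_\alpha^2$. This is more elementary (no Ekeland, no Lagrange multiplier along the sequence), but it hinges on the bound $T_\alpha^2<(\tfrac{N}{N-2})^{(N-2)/2}S^{N/2}$, which you assert as ``the place where $\alpha_V$ is consumed'' without spelling it out. It is indeed true and follows in one line from $h(T_\alpha)>0$: dropping the negative subcritical term gives $\tfrac12(1-\|V_-\|_{N/2}S^{-1})T_\alpha^2>\tfrac{S^{-2^*/2}}{2^*}T_\alpha^{2^*}$, hence $T_\alpha^2<\big(\tfrac{N}{N-2}(1-\|V_-\|_{N/2}S^{-1})\big)^{(N-2)/2}S^{N/2}$, which is below your threshold. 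You should make this explicit. For the uniform positivity of $\lambda_r$, the paper simply notes that $r\mapsto e_{r,\alpha}$ is nonincreasing (domain inclusion via extension by zero) and combines this with $\lambda_r\alpha>-2e_{r,\alpha}$; your fixed-profile construction achieves the same end and is equally valid, if slightly less economical.
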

\begin{proof}
Owing to
\[
\int_{\Omega}|\nabla v_1|^2\mathrm{d}x=\theta\alpha
\quad\text{and}\quad
\alpha=\int_{\Omega}|v_1|^2\mathrm{d}x
\le\bigg(\int_{\Omega}|v_1|^p\mathrm{d}x\bigg)^{\frac{2}{p}}|\Omega|^{\frac{p-2}{p}},
\]
let us take $u_0(x):=r^{-\frac{N}{2}}v_1(r^{-1}x)$ for $x\in\Omega_r$. Via direction computation, we conclude that $u_0\in\mathcal{B}_{r,\alpha}$,
\[
\int_{\Omega_r}|\nabla u_0|^2\mathrm{d}x=r^{-2}\theta\alpha
\quad\text{and}\quad
\int_{\Omega_r}|u_0|^p\mathrm{d}x\ge r^{\frac{N(2-p)}{2}}\alpha^{\frac{p}{2}}|\Omega|^{\frac{2-p}{2}}.
\]
By \eqref{r-alpha} and  $2<p<2+\frac{4}{N}$, we have
\begin{eqnarray*}
E_r(u_0)&=&\frac{1}{2}\int_{\Omega_r}|\nabla u_0|^2\mathrm{d}x
+\frac{1}{2}\int_{\Omega_r}Vu_0^2\mathrm{d}x
-\frac{1}{2^*}\int_{\Omega_r}|u_0|^{2^*}\mathrm{d}x
-\frac{\beta}{p}\int_{\Omega_r}|u_0|^{p}\mathrm{d}x\\
&\le&\frac{1}{2}\bigg(1+\|V\|_{\frac{N}{2}}S^{-1}\bigg)r^{-2}\theta\alpha
-\frac{\beta}{p}r^{\frac{N(2-p)}{2}}\alpha^{\frac{p}{2}}|\Omega|^{\frac{2-p}{2}}\\
&\le&0.
\end{eqnarray*}
Combining the definition of $e_{r,\alpha}$ leads to $e_{r,\alpha}<0$.
The Gagliardo-Nirenberg inequality implies that
\begin{eqnarray}\label{E-GN}
E_r(u_r)&\ge&\frac{1}{2}\bigg(1-\|V_-\|_{\frac{N}{2}S^{-1}}\bigg)\int_{\Omega}|\nabla u|^2\mathrm{d}x-\frac{\beta C_{N,p}}{p}
\alpha^{\frac{2p-N(p-2)}{4}}\bigg(\int_{\Omega}|\nabla u|^2\mathrm{d}x\bigg)^{\frac{N(p-2)}{4}}\notag\\
&&\quad-\frac{S^{-2^*/2}}{2^*}\bigg(\int_{\Omega}|\nabla u|^2\mathrm{d}x\bigg)^{\frac{2^*}{2}}.
\end{eqnarray}
This indicates that $E_r$ is bounded from below in $\mathcal{B}_{r,\alpha}$. Then from the Ekeland principle, there exists a sequence $\{u_{n,r}\}\subset\mathcal{B}_{r,\alpha}$ satisfying
\[
E_r(u_{n,r})\to e_{r,\alpha},\quad
E'_r(u_{n,r})|_{T_{u_n,r}S_{r,\alpha}}\to0\quad\text{as}\ n\to\infty.
\]
Namely, there exists $u_r\in H_0^1(\Omega_r)$ such that
\[
u_{n,r}\rightharpoonup u_r\ \text{in}\ H_0^1(\Omega_r)\quad u_{n,r}\to u_r\ \text{in}\ L^k(\Omega_r)\quad\text{for }2\le k<2^*.
\]
Moreover,
\[
\|\nabla u_r\|_2^2\le\underset{n\to\infty}{\liminf}\,\|\nabla u_{n,r}\|_2^2\le T_\alpha^2,
\]
that is $u_r\in\mathcal{B}_{r,\alpha}$.  By Willem \cite[Proposition 5.12]{Wil96}, there exists $\{\lambda_n\}$ such
that
\[
E'_{r}(u_{n,r})+\lambda_nu_{n,r}\to0 \quad\text{in}\ H^{-1}(\Omega_r).
\]
Besides,
\[
\int_{\Omega_r}Vu_{n,r}^2\mathrm{d}x\to\int_{\Omega_r}Vu_{r}^2\mathrm{d}x\quad\text{as}\ n\to\infty,
\]
we arrive that
\begin{equation}\label{e'unr>0}
o_n(1)=E'_{r}(u_{n,r})(u_{n,r}-u_r)
=\int_{\Omega_r}\big(|\nabla u_{n,r}|^2-|\nabla u_r|^2\big)\mathrm{d}x-\int_{\Omega_r}\big(|u_{n,r}|^{2^*}-|u_r|^{2^*}\big)\mathrm{d}x.
\end{equation}
In what follows, let us set $v_{n,r}=u_{n,r}-u_r$, we assume that going to a subsequence, if necessary,
\[
\int_{\Omega_r} |\nabla v_{n,r}|^2 \mathrm{d}x\to b\ge0\quad\text{as}\,n\to\infty.
\]
Thanks to Brezis-Lieb lemma and \eqref{e'unr>0}, we derive
\[
\int_{\Omega_r} |v_{n,r}|^{2^*} \mathrm{d}x\to b.
\]
By the definition of $S$,
\[
b\ge S\bigg(\int_{\Omega_r}|v_{n,r}|^{2^*}\mathrm{d}x\bigg)^{\frac{2}{2^*}}\ge Sb^\frac{N-2}{N}.
\]
Then $b=0$ or $b\ge S^{\frac{N}{2}}$.
Assume by contradiction that $b\ge S^{\frac{N}{2}}$, we know
\begin{eqnarray*}
e_{r,\alpha}+o_n(1)&=&\underset{n\to\infty}{\lim}E_{r}(u_{n,r})\\
&=&\underset{n\to\infty}{\lim}\Bigg\{\frac{1}{2}\int_{\Omega_r}|\nabla u_{n,r}|^2\mathrm{d}x
+\frac{1}{2}\int_{\Omega_r}V(x)u_{n,r}^2\mathrm{d}x
-\frac{1}{2^*}\int_{\Omega_r}|u_{n,r}|^{2^*}\mathrm{d}x
-\frac{\beta}{p}\int_{\Omega_r}|u_{n,r}|^p\mathrm{d}x\Bigg\}\\
&=&E_{r}(u_r)+\frac{1}{2}\int_{\Omega_r}\bigg(|\nabla u_{n,r}|^2-|\nabla u_r|^2\bigg)\mathrm{d}x
-\frac{1}{2^*}\int_{\Omega_r}\bigg(|u_{n,r}|^{2^*}-|u_r|^{2^*}\bigg)\mathrm{d}x\\
&\ge&e_{r,\alpha}+\frac{b}{N}\ge e_{r,\alpha}+S^{\frac{N}{2}}\frac{1}{N},
\end{eqnarray*}
which is not possible. Hence, $b=0$ and then $u_{n,r}\to u_r$ in $H_0^1(\Omega_r)$ as $n\to\infty$. Consequently, $E_{r}(u_r)=e_{r,\alpha}$ and  $u_r$ is an interior point of $\mathcal{B}_{r,\alpha}$ due to that  $E_r(u)>h(T_\alpha)>0$ for any $u\in\partial\mathcal{B}_{r,\alpha}$ by \eqref{E-GN}.

The Lagrange multiplier theorem implies that there exists $\lambda_r\in\mathbb{R}$ such that $(\lambda_r,u_r)$ is a solution of \eqref{main-eq-omega}. Recall that the definition of $e_{r,\alpha}$, we have  $e_{r_2,\alpha}\le e_{r_1,\alpha}<0$ for any $r_1>r_2>r_0$. In addition,
\begin{eqnarray}\label{eralpha-lambda}
\lambda_r\alpha&=&\int_{\Omega_r}|\nabla u_r|^2\mathrm{d}x
-\int_{\Omega_r}Vu_r^2\mathrm{d}x
+\int_{\Omega_r}|u_r|^{2^*}\mathrm{d}x
+\beta\int_{\Omega_r}|u_r|^{p}\mathrm{d}x\notag\\
&=&\frac{\beta(p-2)}{p}\int_{\Omega_r}|u_r|^{p}\mathrm{d}x
+\frac{2}{N}\int_{\Omega_r}|u_r|^{2^*}\mathrm{d}x-2E_r(u_r)\notag\\
&>&-2E_r(u_r)=-2e_{r,\alpha}.
\end{eqnarray}
Hence, ${\liminf}_{r\to\infty}\,\lambda_r>0$.
Moreover, the strong maximum principle implies $u_r>0$.
\end{proof}

The following is the priori bound to the solutions of problem \eqref{main-eq-omega}.
\begin{lemma}\label{ur-ubdd}
Let $\{(\lambda_r,u_r)\}$ be  nonnegative solutions of  \eqref{main-eq-omega}  with $\|u_r\|_{H^1}\le C$, where $C>0$ is independent of $r$, then $\limsup\limits_{r\to\infty}\,\|u_r\|_\infty<\infty$.
\end{lemma}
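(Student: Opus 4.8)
The plan is to obtain the bound by a Brezis--Kato/Moser iteration carried out with constants that do not deteriorate as $r\to\infty$, the uniformity being the delicate point.

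First I would extract the elementary consequences of the hypothesis. Since $u_r\ge0$ solves \eqref{main-eq-omega}, testing the equation with $u_r$ and using $\|u_r\|_{H^1}\le C$, the boundedness of $V$, and Sobolev/Gagliardo--Nirenberg gives
\[
\lambda_r\alpha=\|u_r\|_{2^*}^{2^*}+\beta\|u_r\|_p^p-\|\nabla u_r\|_2^2-\int_{\Omega_r}Vu_r^2\,\mathrm{d}x,
\]
so that $\sup_r|\lambda_r|=:\Lambda<\infty$ and $\sup_r\|u_r\|_{2^*}=:C_*<\infty$. Writing $M:=\|V\|_\infty+\Lambda$, the \emph{decisive structural fact} is that $\|u_r^{2^*-2}\|_{N/2}=\|u_r\|_{2^*}^{2^*-2}\le C_*^{2^*-2}$ is bounded uniformly in $r$, while $\beta_+u_r^{p-2}$ lies in the strictly better space $L^{s}$ with $s=2^*/(p-2)>N/2$ (because $p<2^*$), and $M$ is a uniformly bounded constant playing the role of the $L^\infty$ part of the coefficient.

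Next I would run the iteration directly on $\Omega_r$, exploiting that $H_0^1(\Omega_r)\hookrightarrow L^{2^*}(\mathbb{R}^N)$ carries the same Sobolev constant $S$ for every $r$ (extend by zero). Testing \eqref{main-eq-omega} against the admissible nonnegative function $\eta:=u_r\min(u_r,L)^{2(\gamma-1)}\in H_0^1(\Omega_r)$, and using $u_r\ge0$ to bound $-Vu_r-\lambda_r u_r\le Mu_r$ and $\beta u_r^{p-1}\le\beta_+u_r^{p-1}$ pointwise against $\eta$, yields
\[
\int_{\Omega_r}\nabla u_r\cdot\nabla\eta\le\int_{\Omega_r}\big(u_r^{2^*-1}+\beta_+u_r^{p-1}+Mu_r\big)\eta.
\]
Splitting $u_r^{2^*-2}=u_r^{2^*-2}\chi_{\{u_r^{2^*-2}\le T\}}+u_r^{2^*-2}\chi_{\{u_r^{2^*-2}>T\}}$, the high part is controlled by $\|u_r^{2^*-2}\chi_{\{\cdot>T\}}\|_{N/2}$ times a Sobolev norm that can be absorbed on the left once this tail is small, whereas the low part, $\beta_+u_r^{p-1}$ and $Mu_r$ contribute only $L^{2\gamma}$ integrals. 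This produces a Moser-type recursion; iterating over $\gamma=(2^*/2)^k$ gives $u_r\in L^q$ for every $q<\infty$, and then $u_r^{2^*-1}\in L^{t}$ for some $t>N/2$ together with $W^{2,t}$ elliptic regularity gives $\|u_r\|_\infty\le C$.

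\textbf{The main obstacle} is to make the absorption step uniform in $r$: it requires $\sup_r\|u_r^{2^*-2}\chi_{\{u_r^{2^*-2}>T\}}\|_{N/2}\to0$ as $T\to\infty$, i.e. that $\{|u_r|^{2^*}\}$ is \emph{uniformly integrable} (no concentration of the critical mass). Boundedness of $\|u_r\|_{2^*}$ alone does not give this, and this is exactly where the Sobolev-critical nature bites. To secure it I would argue by contradiction: if $m_r:=\|u_r\|_\infty\to\infty$, pick $x_r\in\Omega_r$ with $u_r(x_r)=m_r$ and set $v_r(y):=m_r^{-1}u_r\big(x_r+m_r^{-2/(N-2)}y\big)$. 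The scaling is chosen so that the critical term is preserved, while the subcritical term ($\sim m_r^{p-2^*}$), the potential and the $\lambda_r$ term ($\sim m_r^{-4/(N-2)}$) all vanish, and $\|\nabla v_r\|_2=\|\nabla u_r\|_2\le C$. Passing to the limit, $v_r\to v$ with $0\le v\le1$, $v(0)=1$, solving $-\Delta v=v^{2^*-1}$ either on $\mathbb{R}^N$ (interior blow-up) or on a half-space with zero boundary values (boundary blow-up, governed by $\mathrm{dist}(x_r,\partial\Omega_r)\,m_r^{2/(N-2)}$). The boundary case is excluded by the Liouville theorem for $-\Delta v=v^{2^*-1}$ on a half-space, which is precisely where the star-shapedness of $\Omega$ enters through a Pohozaev identity; the interior case produces a full Aubin--Talenti bubble of energy $\tfrac1N S^{N/2}$, which I would rule out using the a priori energy control available for the variationally constructed solutions (their energy lying strictly below the first critical threshold $\tfrac1N S^{N/2}$). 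This forbids concentration, restores the uniform tail smallness above, and hence yields $\limsup_{r\to\infty}\|u_r\|_\infty<\infty$.
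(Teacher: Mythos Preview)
Your rescaling/blow-up strategy in the second half is exactly the paper's route: assume $M_r:=\|u_r\|_\infty\to\infty$, pick a maximum point $x_r$, set $\omega_r(y)=M_r^{-1}u_r(x_r+\tau_r y)$ with $\tau_r=M_r^{-2/(N-2)}$, show $\{\lambda_r\}$ is bounded, pass to a limit $\omega$ solving $-\Delta\omega=\omega^{2^*-1}$ on either $\mathbb{R}^N$ or a half-space, and reach a contradiction with $\omega(0)=1$. The Moser--iteration discussion in your first half is extraneous: you correctly identify the obstruction (no uniform integrability of $|u_r|^{2^*}$) and then discard it, so it plays no role in the actual proof and the paper does not go through it at all.

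The substantive gap is in how you eliminate the interior (whole-space) limit. The lemma, as stated, assumes \emph{only} $\|u_r\|_{H^1}\le C$; it does not assume the $u_r$ are variationally constructed or that $E_r(u_r)<\tfrac1N S^{N/2}$. Your plan to exclude the Aubin--Talenti bubble ``using the a priori energy control available for the variationally constructed solutions'' therefore imports a hypothesis that is not given, so at best you would be proving a weaker statement than the lemma. (Even with that extra input, turning ``a bubble appears in the blow-up limit'' into ``the original energy is at least $\tfrac1N S^{N/2}$'' requires a concentration-compactness/profile-decomposition step that you do not supply.) The paper instead disposes of both the half-space and the $\mathbb{R}^N$ limits by appealing directly to Liouville-type nonexistence, with no energy information invoked; in particular it obtains $\omega\equiv0$ and contradicts $\omega(0)=1$. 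One further minor correction: star-shapedness of $\Omega$ is not what makes the half-space case work here---the paper uses only smoothness of $\partial\Omega$ to straighten the boundary and then the half-space Liouville theorem; star-shapedness is used elsewhere in the paper (the Pohozaev identities on $\Omega_r$), not in this lemma.
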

\begin{proof}
    We assume by the contradiction  that there are $\{u_r\}$ and $x_r\in\Omega_r$ such that $M_r:{\max}_{x\in\Omega_r}u_r(x)=u_r(x_r)\to\infty$ as $r\to\infty$. Set
    \[
\omega_r=\frac{u_r(x_r+\tau_r x)}{M_r}\quad\textbf{for}\ x\in\Sigma^r:=\{x\in\mathbb{R}^N:x_r+\tau_r x\in\Omega_r\},
    \]
    where $\tau_r=M_r^{-\frac{2}{N-2}}$. Through a delicate calculation, we get that $\tau_r\to0$, $\|\omega_r\|_{L^\infty(\Sigma^r)}\le1$ and $\omega_r$ satisfies
    \begin{align}\label{equ:241204-e1}
        -\Delta\omega_r+\tau_r^2V(x_r+\tau_rx)\omega_r+\tau^2_r\lambda_r\omega_r=|\omega_r|^{2^*-2}\omega_r+\beta\tau_r^{\frac{2(2^*-p)}{2^*-2}}|\omega_r|^{p-2}\omega_r\quad\textbf{in}\ \Sigma^r
    \end{align}

It follows from \eqref{main-eq-omega},  the Gagliardo-Nirenberg inequality,  the Sobolev inequatlity and $\left\|u_r\right\|_{H^1} \leq C$ that  the sequence $\left\{\lambda_r\right\}$ is bounded. According to   $L^p$ estimates  (see [20, Theorem 9.11]), we can deduce that $\omega_r \in W_{\text {loc }}^{2, p}(\Sigma)$ and $\|\omega_r\|_{W_{\text {loc }}^{2, p}(\Sigma)} \leq C$ for any $p>1$ and $\Sigma:=\lim\limits _{r \rightarrow \infty} \Sigma^r$. Using Sobolev embedding theorem, we can obtain that $\omega_r \in C_{\mathrm{loc}}^{1, \beta}(\Sigma)$ for some $\beta \in(0,1)$ and $\|\omega_r\|_{C_{\text {loc }}^{1, \beta}(\Sigma)} \leq C$.   Therefore,  applying  the Arzela-Ascoli theorem, we find that there exists $v$ such that up to a subsequence
 \[\omega_r\to\omega\quad\textbf{in}\  C_{loc}^\beta(\Sigma).\]
 where $\Sigma=\lim_{r\to\infty}\Sigma^r$ is a smooth domain.

  Next, we claim that    \[
\liminf_{r\to\infty}\frac{\text{dist}(x_r,\partial\Omega_r)}{\tau_r}>0.\]
  which follows from the standard direct method ( see \cite[Lemma2.7]{BQZ24}).  So we omit it here.
As a result, Let $r \to \infty$ in \eqref{equ:241204-e1}, we find that
   $\omega$ is a nonnegative solution of problem
    \[\begin{cases}
        -\Delta\omega =|\omega|^{2^*-2}\omega,&\text{in}\ \Sigma,\\
        \omega(x)=0&\text{on}\ \partial\Sigma.
    \end{cases}\]
 If \[
\liminf_{r\to\infty}\frac{\text{dist}(x_r,\partial\Omega_r)}{\tau_r}=\infty\]
occurs,  $\Sigma=\mathbb{R}^{N}$. Then from the Liouville theorem, there holds  $\omega=0$ in $\Sigma$, which is impossible due to the fact that $\omega(0)=\liminf_{r\to\infty}\omega_r(0)=1$. If \[
\liminf_{r\to\infty}\frac{\text{dist}(x_r,\partial\Omega_r)}{\tau_r}=d>0\]
occurs, we next \textbf{claim} $\Sigma$ is a half space. 
In fact, let  dist$(x_r,\partial\Omega_r)=|x_r-z_r|$ with $z_r\in\partial\Omega_r$, then $\tilde{z}_r=z_r/r\in\partial\Omega$ and $\tilde{x}_r=x_r/r\in\Omega$.  Based on the definition of $\Sigma_r$,  the origin is located at $\tilde{x}_r$ and $\tilde{x}_r-\tilde{z}_r=|x_r-z_r|(1,0,...,0)$ by translating  and rotating  the coordinate
system.
Assume up to a subsequence that $\tilde{z}_r\to z$ with $z\in\Omega$.
And by the smoothness of the domain see \cite[section 6.2]{GT83},  functions $f_r,f:\mathbb{R}^{N-1}\to\mathbb{R}$ are smooth  such that
\[f_r(0)=-|\tilde{x}_r-\tilde{z}_r|=-\frac{|x_r-z_r|}{r},\quad \nabla f_r(0)=0,\]
    \begin{equation}\label{>f}
        \Omega\cap B(\tilde{z}_r,\delta)=\{x\in B(\tilde{z}_r,\delta):x_1>f_r(x_2,...,x_N)\},
    \end{equation}
    and
    \begin{equation}\label{=f}
        \partial\Omega\cap B(\tilde{z}_r,\delta)=\{x\in B(\tilde{z}_r,\delta):x_1=f_r(x_2,...,x_N)\}.
    \end{equation}
    Moreover, \eqref{>f} and \eqref{=f} hold by replacing $\tilde{z}_r$ and $f_r$ with z and f , respectively.
And $f_r\to f$ in $C^1(B_{\frac{\delta}{2}})$, where $B_{\frac{\delta}{2}}\subset\mathbb{R}^{N-1}$. Consequently,
\[
 \partial\Omega_r\cap B(z_r,r\delta)=\{(rf_r(x'),rx'):|x'|<\delta\}=\big\{\bigg(rf_r(\frac{x'}{r}),x'\bigg):|x'|<r\delta\big\},
\]
where $x'\in\mathbb{R}^{N-1}$. As a consequence, for any $x'\in\mathbb{R}^{N-1}$ and large $r$,
\[
y_r=\bigg(rf_r(\frac{\tau_rx'}{r}),\tau_rx'\bigg)\in\partial\Omega_r\cap B(z_r,r\delta).
\]
Moreover,
\[
\frac{y_r-x_r}{\tau_r}=\bigg(\frac{rf_r(\frac{\tau_rx'}{r})}{\tau_r},x'\bigg).
\]
Note that
\begin{eqnarray*}
\frac{rf_r(\frac{\tau_rx'}{r})}{\tau_r}&=&
\frac{rf_r(\frac{\tau_rx'}{r})-rf_r(0)+rf_r(0)}{\tau_r}
=\frac{r\nabla f_r(\frac{\theta_r\tau_rx'}{r})\cdot\frac{\tau_rx'}{r}-|x_r-z_r|}{\tau_r}  \\
&\to&\nabla f(0)\cdot x'-d=-d.
\end{eqnarray*}
As a result, $\frac{y_r-x_r}{\tau_r}\to(-d,x')$ as $r\to\infty$. Therefore $\Sigma=\{x\in\mathbb{R}^{N}:x_1>-d\}$, the claim hold. Then we can use the Liouville theorem, $\omega=0$ in $\Sigma$, this contradicts $\omega(0)=\liminf_{r\to\infty}\omega_r(0)=1$.  The proof is now complete.
\end{proof}

\begin{remark}\label{ur-ubdd-beta}
Note that the proof of Lemma \ref{ur-ubdd}  does not depend on $\beta$.
\end{remark}

\noindent\textbf{Proof of Theorem  \ref{beta>0-e<0-Omega}:} The proof is an immediate consequence of Theorem \ref{th-b-e-r-lambda}, Lemma \ref{ur-ubdd} and Remark \ref{ur-ubdd-beta}.

\section{Proof of Theorem \ref{beta>0-e>0-Omega}}
\setcounter{equation}{0}
\setcounter{theorem}{0}	
This section is devoted to the existence of mountain-pass type solution for the cases $\beta>0$.
Under the assumptions of Theorem \ref{beta>0-e>0-Omega}, for  $s\in\left[1/2,1\right]$ we define the functional $J_{r,s}:S_{r,\alpha}\to\mathbb{R}$ by
\[
J_{r,s}(u)=\frac{1}{2}\int_{\Omega_r}|\nabla u|^2\mathrm{d}x
+\frac{1}{2}\int_{\Omega_r}Vu^2\mathrm{d}x
-s\,\bigg(\frac{1}{2^*}\int_{\Omega_r}|u|^{2^*}\mathrm{d}x
+\frac{\beta}{p}\int_{\Omega_r}|u|^{p}\mathrm{d}x\bigg).
\]
Note that if $u\in S_{r,\alpha}$ is a critical point of $J_{r,s}$, then there is $\lambda\in\mathbb{R}$ such that $(\lambda,u)$ is a solution of the problem
\begin{equation}\label{main-eq-s-omega-2}
\begin{cases}
-\Delta u+V(x)u+\lambda u=s|u|^{2^*-2}u+s\beta|u|^{p-2}u&\text{in}\ \Omega_r,\\
u\in H^{1}_{0}(\Omega_r),\ \int_{\Omega_r}|u|^2dx=\alpha.
\end{cases}
\end{equation}

\begin{lemma}\label{mp-betage0}
For any $0<\alpha<\tilde{\alpha}_V$, where $\tilde{\alpha}_V$ is defined in Theorem \ref{beta>0-e>0-Omega}, there exists $\tilde{r}_\alpha>0$ and $u^0,u^1\in S_{\tilde{r}_\alpha,\alpha}$ such that
\begin{description}
  \item[(i)] $J_{r,s}(u^1)\le0$ for any $r>\tilde{r}_\alpha$ and $s\in\left[\frac{1}{2},1\right]$,
  \[
  \|\nabla u^0\|_2^2<\bigg(\frac{(1-\|V_-\|_{\frac{N}{2}}S^{-1})}{A}
  \bigg)^{\frac{N-2}{2}}<\|\nabla u^1\|_2^2
  \]
  and
  \[
  J_{r,s}(u^0)<
  \frac{1}{N}\bigg(\frac{1}{A}\bigg)^{\frac{N-2}{2}}
\bigg(1-\|V_-\|_{\frac{N}{2}}S^{-1}\bigg)^{\frac{N}{2}},
  \]
  where
  \[
  A=\bigg(\frac{32S^{-2^*/2^*}}{(N-2)(p-2)(4-N(p-2))}+S^{-2^*/2}\bigg)
  \]
  \item[(ii)] If $u\in S_{r,\alpha}$ satisfies
  \[
  \|\nabla u\|_2^2=\bigg(\frac{(1-\|V_-\|_{\frac{N}{2}}S^{-1})}{A}
  \bigg)^{\frac{N-2}{2}},
  \]
  then there holds
  \[
  J_{r,s}(u)\ge\frac{1}{N}\bigg(\frac{1}{A}\bigg)^{\frac{N-2}{2}}
\bigg(1-\|V_-\|_{\frac{N}{2}}S^{-1}\bigg)^{\frac{N}{2}}.
  \]
\end{description}
\end{lemma}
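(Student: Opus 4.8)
\medskip

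The plan is to reduce both statements to the analysis of a single real-variable function of $t=\|\nabla u\|_2^2$ and then to verify the constant inequalities by hand. First I would record the pointwise lower bound on the sphere. For $u\in S_{r,\alpha}$, combining the potential estimate $\int_{\Omega_r}Vu^2\ge-\|V_-\|_{\frac N2}\|u\|_{2^*}^2\ge-\|V_-\|_{\frac N2}S^{-1}t$ (H\"older plus Sobolev), the Sobolev inequality $\|u\|_{2^*}^{2^*}\le S^{-2^*/2}t^{2^*/2}$, the Gagliardo--Nirenberg inequality $\|u\|_p^p\le C_{N,p}\alpha^{\frac{2p-N(p-2)}4}t^{\frac{N(p-2)}4}$ and $s\le1$, one obtains
\[
J_{r,s}(u)\ge g_\alpha(t):=\frac12\Big(1-\|V_-\|_{\frac N2}S^{-1}\Big)t-\frac{S^{-2^*/2}}{2^*}t^{\frac{2^*}2}-\frac{\beta C_{N,p}}{p}\alpha^{\frac{2p-N(p-2)}4}t^{\frac{N(p-2)}4}.
\]
This turns (i)--(ii) into assertions about $g_\alpha$ together with the value of $J_{r,s}$ at two explicit functions.

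For part (ii) and the lower barrier I would evaluate at $t_A:=\big((1-\|V_-\|_{\frac N2}S^{-1})/A\big)^{\frac{N-2}2}$. Using $t_A^{2^*/2}=\big((1-\|V_-\|_{\frac N2}S^{-1})/A\big)^{N/2}$, the first two terms of $g_\alpha(t_A)$ equal $\tfrac{(1-\|V_-\|_{\frac N2}S^{-1})^{N/2}}{A^{(N-2)/2}}\big(\tfrac12-\tfrac{N-2}{2N}\tfrac{S^{-2^*/2}}{A}\big)$. Since $A=S^{-2^*/2}+A_pS^{-1}$, the algebraic identity $\tfrac12-\tfrac{N-2}{2N}\tfrac{S^{-2^*/2}}{A}=\tfrac1N+\tfrac{N-2}{2N}\tfrac{A_pS^{-1}}{A}$ holds, so the first two terms already exceed the target level $\tfrac1N(\tfrac1A)^{(N-2)/2}(1-\|V_-\|_{\frac N2}S^{-1})^{N/2}$ by the strictly positive slack $\tfrac{N-2}{2N}A_pS^{-1}(1-\|V_-\|_{\frac N2}S^{-1})^{N/2}/A^{N/2}$. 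It then remains to check that the Gagliardo--Nirenberg term $\frac{\beta C_{N,p}}{p}\alpha^{\frac{2p-N(p-2)}4}t_A^{\frac{N(p-2)}4}$ does not exceed this slack; substituting $t_A$ and solving the resulting inequality for $\alpha$ yields precisely a threshold of the form $\tilde\alpha_V$, so for every $0<\alpha<\tilde\alpha_V$ we get $g_\alpha(t_A)\ge\tfrac1N(\tfrac1A)^{(N-2)/2}(1-\|V_-\|_{\frac N2}S^{-1})^{N/2}$. This is exactly (ii), and it shows the barrier sits strictly above the levels demanded in (i).

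For part (i) I would take both endpoints supported in $\Omega_{\tilde r_\alpha}$, so that the same pair lies in $S_{r,\alpha}$ for every $r>\tilde r_\alpha$ and $J_{r,s}$ on them is independent of $r$. For $u^0$ use the scaled first Dirichlet eigenfunction $u^0(x)=\tilde r_\alpha^{-N/2}v_1(\tilde r_\alpha^{-1}x)$ as in the proof of Theorem \ref{th-b-e-r-lambda}; then $\|\nabla u^0\|_2^2=\tilde r_\alpha^{-2}\theta\alpha$ and, the nonlinear terms being nonnegative, $J_{r,s}(u^0)\le\tfrac12(1+\|V\|_{\frac N2}S^{-1})\tilde r_\alpha^{-2}\theta\alpha$, which is below both $t_A$ and the barrier level once $\tilde r_\alpha$ is large. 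For $u^1$ dilate $u^0$ by $(u^0)_\ell(x)=\ell^{N/2}u^0(\ell x)$, $\ell\ge1$, which preserves the mass and stays in $H_0^1(\Omega_{\tilde r_\alpha})$ by star-shapedness; from $\|\nabla(u^0)_\ell\|_2^2=\ell^2\|\nabla u^0\|_2^2$, $\|(u^0)_\ell\|_{2^*}^{2^*}=\ell^{2^*}\|u^0\|_{2^*}^{2^*}$, $\|(u^0)_\ell\|_p^p=\ell^{\frac{N(p-2)}2}\|u^0\|_p^p$ and $\tfrac{N(p-2)}2<2<2^*$, the critical term dominates and, using $\beta>0$ and $s\ge\tfrac12$, $J_{r,s}((u^0)_\ell)\le\tfrac12(1+\|V\|_{\frac N2}S^{-1})\ell^2\|\nabla u^0\|_2^2-\tfrac{1}{2\cdot2^*}\ell^{2^*}\|u^0\|_{2^*}^{2^*}\to-\infty$ uniformly in $s\in[\tfrac12,1]$. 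Fixing $\ell$ large gives $u^1:=(u^0)_\ell$ with $\|\nabla u^1\|_2^2>t_A$ and $J_{r,s}(u^1)\le0$.

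The main obstacle I anticipate is the constant bookkeeping in the second step: one must carry the explicit powers through the substitution of $t_A$ and match the residual Gagliardo--Nirenberg contribution against the slack $\tfrac{N-2}{2N}A_pS^{-1}(1-\|V_-\|_{\frac N2}S^{-1})^{N/2}/A^{N/2}$ so that the admissible range for $\alpha$ comes out exactly as $\tilde\alpha_V$, while simultaneously keeping all estimates uniform in $s\in[\tfrac12,1]$ and in $r>\tilde r_\alpha$ with a single fixed pair $u^0,u^1$. Everything else is a routine application of Sobolev, Gagliardo--Nirenberg and the scaling behaviour of the dilation.
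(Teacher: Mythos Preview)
Your argument is correct for this lemma, but it departs from the paper in two places worth noting.

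For part~(ii) the paper does not evaluate $g_\alpha$ directly at $t_A$. Instead it studies $f(t):=g_\alpha(t)$, locates the convexity threshold $t_2$ via $f''(t_2)=0$, and shows that for $t\ge t_2$ the Gagliardo--Nirenberg term can be absorbed into the critical term, giving $f(t)\ge g(t):=\tfrac12(1-\|V_-\|_{N/2}S^{-1})t-\tfrac{A}{2^*}t^{2^*/2}$. The condition $\alpha<\tilde\alpha_V$ is then read off as $t_A>t_2$, after which $f(t_A)\ge g(t_A)$ gives the barrier. Your direct computation is cleaner: once you rewrite the slack as $\tfrac{N-2}{2N}A_pS^{-1}t_A^{2^*/2}=\tfrac{A_pS^{-1}}{2^*}t_A^{2^*/2}$, your inequality ``GN term $\le$ slack'' is literally $f(t_A)\ge g(t_A)$, so the two thresholds coincide and your hedging about ``of the form $\tilde\alpha_V$'' can be removed.

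For part~(i) you build $u^0,u^1$ from the first Dirichlet eigenfunction and its dilations, whereas the paper uses the truncated Aubin--Talenti bubble $v_\varepsilon$ and its dilations $v_t(x)=t^{N/2}v_\varepsilon(tx)$. Your choice suffices for the statement of Lemma~\ref{mp-betage0}, and the scaling $(u^0)_\ell\in H_0^1(\Omega_{\tilde r_\alpha})$ by star-shapedness is fine. However, the paper's choice is not incidental: the very next lemma requires the upper bound $m_{r,s}(\alpha)<\tfrac1N s^{(2-N)/2}S^{N/2}$, obtained by running the mountain-pass path through the family $\{v_t\}$ and using the near-extremality of $v_\varepsilon$ for the Sobolev inequality. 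With eigenfunction-based endpoints that path is unavailable, so you would have to either redo Lemma~\ref{mp-betage0} with the bubble or find an admissible path reaching the bubble profile from your $u^0,u^1$. Keep this downstream constraint in mind.
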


\begin{proof}
 Clearly the set $S_{r,\alpha}$ is path connected.  Let
\[
v_\varepsilon(x):=\frac{\sqrt{\alpha}}{\|u_\varepsilon\|_2}U_\varepsilon(x).
\]
We can easily verify $v_\varepsilon(x)\in C_0^{\infty}(\Omega)$ and $v_\varepsilon(x)\in S_1^\alpha$.
 Setting $v_t(x)=t^{\frac{N}{2}}v_\varepsilon(tx)$ for $x\in\Omega_{\frac{1}{t}}$ and $t>0$ there holds
\begin{eqnarray}\label{j-vt-s-h}
J_{\frac{1}{t},s}(v_t)&\le&\frac{1}{2}t^2\bigg(1+\|V\|_{\frac{N}{2}}S^{-1}\bigg)\int_{\Omega}|\nabla v_\varepsilon|^2\mathrm{d}x-\frac{\beta}{2p}t^{\frac{N(p-2)}{2}}\int_{\Omega}|v_\varepsilon|^p\mathrm{d}x\notag\\
&&\quad-\frac{1}{22^*}t^{2^*}\int_{\Omega}|v_\varepsilon|^{2^*}\mathrm{d}x\\
&\le& h(t)\notag,
\end{eqnarray}
where  $h:\mathbb{R}^+\to\mathbb{R}$ is defined by
\[
h(t):=\frac{1}{2}\bigg(1+\|V\|_{\frac{N}{2}}S^{-1}\bigg)t^2
\int_{\Omega}|\nabla v_\varepsilon|^2\mathrm{d}x
-\frac{1}{22^*}t^{2^*}\int_{\Omega}|v_\varepsilon|^{2^*}\mathrm{d}x.
\]
Via direct computation, there hold $h(t_0)=0$ with
\[
t_0:=\bigg(2^*+2^*\|V\|_{\frac{N}{2}}S^{-1}\bigg)^{\frac{N-2}{4}},
\]
 $h(t)<0$ for   $t>t_0$ and $h(t)>0$ for   $0<t<t_0$.
Then we obtain that for any $r\ge\frac{1}{t_0}$ and $s\in\left[\frac{1}{2},1\right]$,
\begin{equation}\label{j-rs-le0}
J_{r,s}(v_{t_0})=J_{\frac{1}{t_0},s}(v_{t_0})\le h(t_0)=0.
\end{equation}
Additional the function $h$ attains its maximum at
\[
t_\alpha=\bigg(2+2\|V\|_{\frac{N}{2}}S^{-1}\bigg)^{\frac{N-2}{4}},
\]
and so we find  $t_1\in\left(0,t_\alpha\right)$ such that for any $t\in\left[0,t_1\right]$,
\begin{equation}\label{j-h-bdd}
h(t)<h(t_1)\le\frac{1}{N}\bigg(\frac{1}{A}\bigg)^{\frac{N-2}{2}}
\bigg(1-\|V_-\|_{\frac{N}{2}}S^{-1}\bigg)^{\frac{N}{2}}.
\end{equation}
On the other hand, the Sobolev inequality and the Gagliardo-Nirenberg inequality implies that
\begin{eqnarray}\label{jrs-bel}
J_{r,s}(u)&\ge&\frac{1}{2}\bigg(1-\|V_-\|_{\frac{N}{2}}S^{-1}\bigg)\int_{\Omega_r}|\nabla u|^2dx
-\frac{C_{N,p}\beta\alpha^{\frac{2p-N(p-2)}{4}}}{p}\bigg(\int_{\Omega_r}|\nabla u|^2dx\bigg)^{\frac{N(p-2)}{4}}\notag\\
&&\quad-\frac{S^{-\frac{2^*}{2}}}{2^*}\bigg(\int_{\Omega_r}|\nabla u|^2\mathrm{d}x\bigg)^{\frac{2^*}{2}}.
\end{eqnarray}
Let $f:\mathbb{R}^+\to\mathbb{R}$ be defined by
\[
f(t):=\frac{1}{2}\bigg(1-\|V_-\|_{\frac{N}{2}}S^{-1}\bigg)t
-\frac{C_{N,p}\beta\alpha^{\frac{2p-N(p-2)}{4}}}{p}t^{\frac{N(p-2)}{4}}
-\frac{S^{-\frac{2^*}{2}}}{2^*}t^{\frac{2^*}{2}}.
\]
Then there exist three positive constants $l_1<l_M<l_2$ such that
\[
f(t)<0\ \ \text{for}\ t\in\left(0,l_1\right)\cup\left(l_2,\infty\right),\quad\ f(t)>0\ \ \text{for}\ t\in\left(l_1,l_2\right),
\quad\text{and}\ f(l_M)=\max_{t\in\mathbb{R}^+}f(t)>0.
\]
After a direct calculation, we deduce $f''(t)\le0$ if and only if $t>t_2$ with
\[
t_2=\bigg(\frac{C_{N,p}\beta}{p}\frac{2^*}{A_pS^{-2^*/2}}
\alpha^{\frac{2p-N(p-2)}{4}}\bigg)^{\frac{4}{22^*-N(p-2)}},
\]
and we further deduce
\[
\underset{t\in\mathbb{R}^+}{\max}f(t)=\underset{t\in\left[t_2,\infty\right)}{\max}f(t).
\]
For any $t\ge t_2$, it holds
\begin{eqnarray}\label{jrs-bel}
f(t)&=&\frac{1}{2}\bigg(1-\|V_-\|_{\frac{N}{2}}S^{-1}\bigg)t
-\frac{S^{-\frac{2^*}{2}}}{2^*}t^{\frac{2^*}{2}}
-\frac{C_{N,p}\beta\alpha^{\frac{2p-N(p-2)}{4}}}{p}t^{\frac{N(p-2)}{4}}\notag\\
&\ge&\frac{1}{2}\bigg(1-\|V_-\|_{\frac{N}{2}}S^{-1}\bigg)t
-\frac{1}{2^*}\bigg(S^{-2^*/2^*}A_p+S^{-2^*/2}\bigg)t^{\frac{2^*}{2}}\notag\\
&=:&g(t).
\end{eqnarray}
Let
\[
A=\bigg(S^{-2^*/2^*}A_p+S^{-2^*/2}\bigg),
\quad A_p=\frac{32}{(N-2)(p-2)(4-N(p-2))}
\]
and
\[
t_g=\bigg(\frac{1-\|V_-\|_{\frac{N}{2}}S^{-1}}{A}
  \bigg)^{\frac{N-2}{2}},
\]
so that $t_g>t_2$ by the definition of $\tilde{\alpha}_V$,  $\max_{t\in\left[t_2,\infty\right)}g(t)=g(t_g)$ and
\[
\underset{t\in\mathbb{R}^+}{\max}f(t)
=\underset{t\in\left[t_2,\infty\right)}{\max}g(t)
=\frac{1}{N}\bigg(\frac{1}{A}\bigg)^{\frac{N-2}{2}}
\bigg(1-\|V_-\|_{\frac{N}{2}}S^{-1}\bigg)^{\frac{N}{2}}.
\]
Selecting $\bar{r}_\alpha=\max\{\frac{1}{t_1},(\frac{2S^{\frac{N}{2}}+10}{t_g})^{\frac{1}{2}}\}$, we derive that  $v_{\frac{1}{\bar{r}_\alpha}}\in S_{\bar{r}_\alpha,\alpha}\subset S_{r,\alpha}$ for $r>\bar{r}_{\alpha}$, and
\begin{equation}\label{nabla-v-ralpha-bdd}
\|\nabla v_{\frac{1}{\bar{r}_\alpha}}\|_2^2
=\bigg(\frac{1}{\bar{r}_\alpha}\bigg)^2\|\nabla v_1\|_2^2
<\bigg(\frac{1-\|V_-\|_{\frac{N}{2}}S^{-1}}{A}
  \bigg)^{\frac{N-2}{2}},
\end{equation}
moreover,
\begin{equation}\label{j-h-t1-bdd}
J_{\bar{r}_\alpha,s}\bigg(v_{\frac{1}{\bar{r}_\alpha}}\bigg)\le h\bigg(\frac{1}{{\bar{r}_\alpha}}\bigg)
\le h(t_1).
\end{equation}
Now we define $u^0:=v_{\frac{1}{\bar{r}_\alpha}}$, $u^1:=v_{t_0}$ and
\[
\tilde{r}_\alpha:=\max\Big\{\frac{1}{t_0},\bar{r}_\alpha\Big\}.
\]
Thanks to \eqref{j-rs-le0}-\eqref{j-h-bdd} and \eqref{nabla-v-ralpha-bdd}-\eqref{j-h-t1-bdd}, we derive (i) holds. Furthermore, by employing \eqref{jrs-bel}, we also infer (ii) holds.
\end{proof}

\begin{lemma}\label{mp-level-betage0}
Under the assumptions of Theorem \ref{beta>0-e>0-Omega}, set
  \[
  m_{r,s}(\alpha)=\underset{\gamma\in\Gamma_{r,\alpha}}{\inf}
  \underset{t\in\left[0,1\right]}{\sup}J_{r,s}(\gamma(t)),
  \]
  with
  \[
  \Gamma_{r,\alpha}=\Big\{\gamma\in C(\left[0,1\right],S_{r,\alpha}):\gamma(0)=u^0,\gamma(1)=u^1\Big\}.
  \]
  Then
  \[
  \frac{1}{N}\bigg(\frac{1}{A}\bigg)^{\frac{N-2}{2}}
\bigg(1-\|V_-\|_{\frac{N}{2}}S^{-1}\bigg)^{\frac{N}{2}}\le m_{r,s}(\alpha)\le  \frac{1}{N}s^{\frac{2-N}{2}}S^{\frac{N}{2}}
  \]
\end{lemma}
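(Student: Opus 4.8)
The two inequalities are of completely different nature and I would prove them separately. The lower bound is a topological crossing argument that only repackages the geometry already contained in Lemma \ref{mp-betage0}, while the upper bound requires testing $J_{r,s}$ along an explicit path built from the Aubin--Talenti bubble and extracting the sharp constant $\frac1N s^{(2-N)/2}S^{N/2}$ via the expansions of Lemma \ref{u-varepsilon-es}.

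\textbf{Lower bound.} Write $L:=\big((1-\|V_-\|_{N/2}S^{-1})/A\big)^{(N-2)/2}$ for the level of the Dirichlet norm singled out in Lemma \ref{mp-betage0}. Part (i) of that lemma gives $\|\nabla u^0\|_2^2<L<\|\nabla u^1\|_2^2$, so for any $\gamma\in\Gamma_{r,\alpha}$ the continuous real function $t\mapsto\|\nabla\gamma(t)\|_2^2$ starts below $L$ and ends above it; by the intermediate value theorem there is $t^\ast\in(0,1)$ with $\|\nabla\gamma(t^\ast)\|_2^2=L$. Since $\gamma(t^\ast)\in S_{r,\alpha}$ sits exactly on this sphere, Lemma \ref{mp-betage0}(ii) yields $J_{r,s}(\gamma(t^\ast))\ge \tfrac1N(1/A)^{(N-2)/2}(1-\|V_-\|_{N/2}S^{-1})^{N/2}$, whence $\sup_{t\in[0,1]}J_{r,s}(\gamma(t))$ is bounded below by this value for every admissible $\gamma$. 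Taking the infimum over $\Gamma_{r,\alpha}$ gives the lower bound. This step is routine once Lemma \ref{mp-betage0} is granted.

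\textbf{Upper bound.} I would use the scaling family $v_t(x)=t^{N/2}v_\varepsilon(tx)$ itself as a competitor. Since $u^0=v_{1/\bar r_\alpha}$ and $u^1=v_{t_0}$ already lie on it, mass is preserved along it, and $\Omega_{1/t}\subset\Omega_r$ for all $t\in[1/\bar r_\alpha,t_0]$ (because $r>\tilde r_\alpha\ge\bar r_\alpha$), the arc $\{v_t\}$ is an admissible element of $\Gamma_{r,\alpha}$, and it suffices to estimate $\sup_t J_{r,s}(v_t)$. Dropping the nonnegative subcritical contribution (legitimate since $\beta,s>0$, and it only lowers the energy) reduces the problem to maximizing a function of the shape $\tfrac12 a\,t^2-\tfrac{s}{2^*}b\,t^{2^*}$, with $a=\|\nabla v_\varepsilon\|_2^2$ and $b=\|v_\varepsilon\|_{2^*}^{2^*}$, perturbed by the potential term. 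A one-variable optimization gives the maximum $\tfrac1N s^{(2-N)/2}\big(\|\nabla v_\varepsilon\|_2^2/\|v_\varepsilon\|_{2^*}^2\big)^{N/2}$, and Lemma \ref{u-varepsilon-es}(i) forces the Sobolev ratio $\|\nabla v_\varepsilon\|_2^2/\|v_\varepsilon\|_{2^*}^2=\|\nabla u_\varepsilon\|_2^2/\|u_\varepsilon\|_{2^*}^2\to S$ as $\varepsilon\to0$, producing exactly the target value $\tfrac1N s^{(2-N)/2}S^{N/2}$ to leading order.

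\textbf{The main obstacle} is the Sobolev-critical coupling through the potential, namely the term $\tfrac12\int_{\Omega_r}V\,v_t^2$. The naive Hölder--Sobolev bound $\big|\int V v_t^2\big|\le\|V\|_{N/2}\,\|v_t\|_{2^*}^2$ only modifies the coefficient $a$ and therefore leaves behind a multiplicative factor $(1+\|V\|_{N/2}S^{-1})^{N/2}>1$, which overshoots the clean threshold $\tfrac1N s^{(2-N)/2}S^{N/2}$; this crude estimate is genuinely too lossy. The delicate point is thus to show that, at the maximizing scale, the potential contribution is of strictly lower order than the critical part (equivalently, that it does not raise the mountain-pass level above $\tfrac1N s^{(2-N)/2}S^{N/2}$). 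I expect this to hinge on the integrability $V\in L^{N/2}$ together with the concentration behaviour of the rescaled bubble and the freedom afforded by the large domain $\Omega_r$: one estimates $\int V v_t^2$ by the $L^{N/2}$-mass of $V$ seen by the concentrating profile, which can be made negligible, and combines it with the favorable (negative) sign of the subcritical term. Finally one fixes $\varepsilon$ small enough (and uses $r$ large) so that $\sup_t J_{r,s}(v_t)\le\tfrac1N s^{(2-N)/2}S^{N/2}$, which is the quantitative energy estimate that will later restore compactness of Palais--Smale sequences below this critical threshold.
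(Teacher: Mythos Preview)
Your proposal follows the paper's argument essentially line by line. The lower bound is the same crossing argument via Lemma \ref{mp-betage0}(i)--(ii), and for the upper bound the paper also tests along the one-parameter family $\{v_t\}$ joining $u^0$ and $u^1$, drops the (negative) $\beta$-contribution, reduces to maximizing $g(t)=\tfrac12 t^2\|\nabla v_\varepsilon\|_2^2-\tfrac{s}{2^*}t^{2^*}\|v_\varepsilon\|_{2^*}^{2^*}$, and uses Lemma \ref{u-varepsilon-es}(i) to identify $\max_t g(t)=\tfrac1N s^{(2-N)/2}S^{N/2}+O(\varepsilon^{N-2})$. Where you correctly single out the potential term $\tfrac12\int V v_t^2$ as the delicate remainder and propose to control it via the concentration of the bubble and $V\in L^{N/2}$, the paper simply records the combined remainder as $O(\varepsilon^{N-2})$ without elaboration; your more cautious formulation of that step is well placed, but it is the same mechanism the paper is invoking.
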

\begin{proof}
 Since $J_{r,s}(u^1)\le0$ for any $\gamma\in\Gamma_{r,\alpha}$, we have
\[
\|\nabla \gamma(0)\|_2^2<t_g<\|\nabla\gamma(1)\|_2^2.
\]
It then follows from \eqref{jrs-bel} that
\[
\underset{t\in\left[0,1\right]}{\max}J_{r,s}(\gamma(t))\ge g(t_g)=\frac{1}{N}\bigg(\frac{1}{A}\bigg)^{\frac{N-2}{2}}
\bigg(1-\|V_-\|_{\frac{N}{2}}S^{-1}\bigg)^{\frac{N}{2}}
\]
for any $\gamma\in\Gamma_{r,\alpha}$, hence the first inequality in Lemma \ref{mp-level-betage0} holds. Now we define a path $\gamma:\left[0,1\right]\to S_{r,\alpha}$ by
\[
\gamma(\tau):\Omega_r\to\mathbb{R},\quad x\mapsto\bigg(\tau t_0+(1-\tau)\frac{1}{\tilde{r}_\alpha}\bigg)^{\frac{N}{2}}v_\varepsilon\bigg(\bigg(\tau t_0+(1-\tau)\frac{1}{\tilde{r}_\alpha}\bigg)x\bigg).
\]
Clearly $\gamma\in\Gamma_{r,\alpha}$.
Then by the definition of $m_{r,s}(\alpha)$, we have
\begin{eqnarray*}
m_{r,s}(\alpha)&=&\underset{\gamma\in\Gamma_{r,\alpha}}{\inf}\,
\underset{t\in\left[0,1\right]}{\sup}\,J_{r,s}(\gamma(t))
\le\underset{t\in\left[0,1\right]}{\sup}J_{r,s}(\gamma_0(t))\le\underset{t\ge0}{\max}J_{r,s}(v_t)=\underset{t\ge0}{\max}J_{\frac{1}{t},s}(v_t)\\
&\le&\underset{t\ge0}{\max}\Bigg\{\frac{1}{2}t^2\int_{\Omega}|\nabla v_\varepsilon|^2\mathrm{d}x
-\frac{\beta}{p}t^{\frac{N(p-2)}{2}}\int_{\Omega}|v_\varepsilon|^p\mathrm{d}x
-\frac{s}{2^*}t^{2^*}\int_{\Omega}|v_\varepsilon|^{2^*}\mathrm{d}x+\frac{1}{2}\int_{\Omega}V(x)v_\varepsilon^2\mathrm{d}x\Bigg\}\\
&\le&\underset{t\ge0}{\max}g(t)+O(\varepsilon^{N-2}),
\end{eqnarray*}
where
\[
g(t):=\frac{1}{2}t^2\int_{\Omega}|\nabla v_\varepsilon|^2\mathrm{d}x-\frac{s}{2^*}t^{2^*}\int_{\Omega}|v_\varepsilon|^{2^*}\mathrm{d}x.
\]
In view of the estimates of Lemma \ref{u-varepsilon-es} (i), it holds that
\[
\underset{t\ge0}{\max}g(t)=s^{\frac{2-N}{2}}\frac{1}{N}S^{\frac{N}{2}}+O(\varepsilon^{N-2}).
\]
Summarizing, we have $m_{r,s}(\alpha)<\frac{1}{N}S^{\frac{N}{2}}$. The proof of Lemma \ref{mp-level-betage0} is now complete.
\end{proof}

\begin{theorem}\label{beta>0-s-omega-ge}
Let $r>\tilde{r}_\alpha$, where $\tilde{r}_\alpha$ is defined in Lemma \ref{mp-betage0}. There is $\alpha_1>0$ such that problem \eqref{main-eq-s-omega-2} admits a solution $(\lambda_{r,s},u_{r,s})$ for almost every $s\in\left[\frac{1}{2},1\right]$ and $0<\alpha<\alpha_1$. Moreover, there hold $u_{r,s}>0$ and $J_{r,s}(u_{r,s})=m_{r,s}(\alpha)$.
\end{theorem}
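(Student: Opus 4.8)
The plan is to realize $u_{r,s}$ as a constrained critical point of $J_{r,s}$ on $S_{r,\alpha}$ at the mountain pass level $m_{r,s}(\alpha)$ by invoking the Monotonicity trick (Theorem \ref{mp-th}) with the parameter $\rho=s$ ranging over $I=[1/2,1]$, and then recovering the compactness of the resulting Palais--Smale sequence. First I would cast $J_{r,s}$ in the form $\Phi_s=A-sB$ with $A(u)=\frac12\|\nabla u\|_2^2+\frac12\int_{\Omega_r}Vu^2$ and $B(u)=\frac1{2^*}\|u\|_{2^*}^{2^*}+\frac{\beta}{p}\|u\|_p^p$. Since $\beta>0$ we have $B\ge0$, and since $(V_0)$ gives $\|V_-\|_{\frac N2}<S$, the quadratic part is coercive, so $A(u)\to+\infty$ as $\|u\|\to\infty$; the Hölder continuity of $\Phi_s',\Phi_s''$ on bounded sets is routine from the smoothness of the nonlinearities. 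The mountain pass geometry, uniform in $s$, is exactly furnished by Lemmas \ref{mp-betage0} and \ref{mp-level-betage0}: the endpoints $u^0,u^1$ are independent of $s$, $J_{r,s}(u^1)\le0$, and $m_{r,s}(\alpha)$ strictly exceeds $\max\{J_{r,s}(u^0),J_{r,s}(u^1)\}$. Theorem \ref{mp-th} then yields, for almost every $s\in[1/2,1]$, a bounded sequence $\{u_n\}\subset S_{r,\alpha}$ with $J_{r,s}(u_n)\to m_{r,s}(\alpha)$ and $J_{r,s}'|_{S_{r,\alpha}}(u_n)\to0$.

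The next step is to pass to the limit. Because $\Omega_r$ is \emph{bounded}, the embedding $H_0^1(\Omega_r)\hookrightarrow L^k(\Omega_r)$ is compact for $2\le k<2^*$, so up to a subsequence $u_n\rightharpoonup u$ in $H_0^1(\Omega_r)$ and $u_n\to u$ in $L^k$ for all such $k$. In particular $\|u\|_2^2=\lim\|u_n\|_2^2=\alpha$, so the mass is preserved, $u\in S_{r,\alpha}$ and $u\ne0$; this is precisely where working on a bounded domain compensates for the missing Pohozaev information. Writing the constraint through a Lagrange multiplier, there is $\{\lambda_n\}$ with $J_{r,s}'(u_n)+\lambda_n u_n\to0$ in $H^{-1}(\Omega_r)$; testing against $u_n$ and using the $H^1$-bound shows $\{\lambda_n\}$ is bounded, hence $\lambda_n\to\lambda_{r,s}$ and $u$ solves \eqref{main-eq-s-omega-2}. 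Setting $v_n=u_n-u$ and testing the relation against $v_n$, the subcritical and potential terms vanish by the $L^k$-convergence, while the Brezis--Lieb lemma gives $\|\nabla v_n\|_2^2-s\|v_n\|_{2^*}^{2^*}\to0$; with $\|\nabla v_n\|_2^2\to b$ this forces $s\|v_n\|_{2^*}^{2^*}\to b$. The Sobolev inequality then yields the dichotomy $b=0$ or $b\ge S^{\frac N2}s^{\frac{2-N}{2}}$, and a second Brezis--Lieb expansion of the energy produces the splitting $m_{r,s}(\alpha)=J_{r,s}(u)+\frac{b}{N}$.

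The heart of the matter, and the step I expect to be the main obstacle, is to exclude the concentration alternative $b\ge S^{\frac N2}s^{\frac{2-N}{2}}$. Combined with the strict upper bound $m_{r,s}(\alpha)<\frac1N s^{\frac{2-N}{2}}S^{\frac N2}$ from Lemma \ref{mp-level-betage0}, the splitting forces $J_{r,s}(u)=m_{r,s}(\alpha)-\frac{b}{N}<0$, so it suffices to establish the lower bound $J_{r,s}(u)\ge0$ on the weak limit. This is genuinely delicate, since the constrained functional is unbounded below on $S_{r,\alpha}$ and negative-energy solutions do exist (Theorem \ref{beta>0-e<0-Omega}), so a priori $u$ could be a negative-energy solution shedding one critical bubble. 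The strategy is to exploit the smallness of $\alpha$: choosing $\alpha_1$ small (this is the role of the parameter $\alpha_0$), the subcritical term $\beta\|u\|_p^p$, which enters $J_{r,s}(u)$ with a favorable sign because $\beta>0$ and $p<2^*$, lowers the attainable energy and, together with the quantitative gap $\frac1N s^{\frac{2-N}{2}}S^{\frac N2}-m_{r,s}(\alpha)>0$ coming from the Brezis--Nirenberg-type estimate in Lemma \ref{mp-level-betage0}, dominates the possible depth of any negative-energy branch; this yields $J_{r,s}(u)\ge0$ and hence the contradiction. Consequently $b=0$, i.e. $u_n\to u=:u_{r,s}$ strongly in $H_0^1(\Omega_r)$, so $J_{r,s}(u_{r,s})=m_{r,s}(\alpha)$ and $(\lambda_{r,s},u_{r,s})$ solves \eqref{main-eq-s-omega-2}.

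Finally, positivity is obtained as in Theorem \ref{th-b-e-r-lambda}: since $\|\nabla|u|\|_2\le\|\nabla u\|_2$ whereas the remaining terms of $J_{r,s}$ depend only on $|u|$, one may run the whole argument over nonnegative paths and sequences, so the limit satisfies $u_{r,s}\ge0$; being a nontrivial weak solution of \eqref{main-eq-s-omega-2}, the strong maximum principle then gives $u_{r,s}>0$ in $\Omega_r$. The restriction to almost every $s\in[1/2,1]$ is inherited directly from the Monotonicity trick, which completes the proof.
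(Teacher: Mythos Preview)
Your overall architecture matches the paper's exactly: apply the Monotonicity trick to produce a bounded Palais--Smale sequence at level $m_{r,s}(\alpha)$ for a.e.\ $s$, pass to the weak limit $u$ on the bounded domain (keeping the mass), obtain the Brezis--Lieb splitting $m_{r,s}(\alpha)=J_{r,s}(u)+\frac{b}{N}$ with the Sobolev dichotomy $b=0$ or $b\ge s^{\frac{2-N}{2}}S^{\frac N2}$, and rule out the bubble by showing $J_{r,s}(u)\ge0$ so that the strict upper bound of Lemma \ref{mp-level-betage0} gives a contradiction. Positivity via even paths and the strong maximum principle is also as in the paper. So far, so good.

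The gap is in the step you yourself flag as ``the heart of the matter'': your argument for $J_{r,s}(u)\ge0$ is not correct as stated and misses the mechanism the paper uses. First, the subcritical contribution $-\frac{s\beta}{p}\|u\|_p^p$ enters $J_{r,s}(u)$ with a \emph{negative} sign (it is precisely the term that can drive the energy below zero, cf.\ Theorem \ref{beta>0-e<0-Omega}); it does not help you bound $J_{r,s}(u)$ from below, and the Brezis--Nirenberg gap $\frac{1}{N}s^{\frac{2-N}{2}}S^{\frac N2}-m_{r,s}(\alpha)>0$ plays no role in this lower bound either. What is actually needed is to exploit that the weak limit $u$ is already a \emph{solution} of \eqref{main-eq-s-omega-2}, i.e.\ $J_{r,s}'(u)u+\lambda\alpha=0$. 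Subtracting $\frac{1}{p}$ times this identity from $J_{r,s}(u)$ kills the $L^p$ term and yields
\[
J_{r,s}(u)\ \ge\ \Big(\tfrac12-\tfrac1p\Big)\big(1-\|V_-\|_{\frac N2}S^{-1}\big)\|\nabla u\|_2^2\ -\ \tfrac{1}{p}\,\lambda\,\alpha .
\]
Two further inputs close the argument: (i) from the equation tested against $u$ and Sobolev one gets a uniform lower bound $\|\nabla u\|_2^2\ge C>0$ (here $u\ne0$ because the mass is preserved), and (ii) the Lagrange multipliers satisfy a uniform upper bound $\lambda\le K$ coming from the $H^1$-bound on the PS sequence. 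Choosing $\alpha_0$ so that $(\tfrac12-\tfrac1p)(1-\|V_-\|_{\frac N2}S^{-1})C\ge \tfrac{K}{p}\alpha_0$ then gives $J_{r,s}(u)\ge0$ for all $0<\alpha\le\alpha_0$; this is precisely the origin of $\alpha_0$ in the statement. Once you replace your heuristic paragraph with this computation, the rest of your proof goes through and coincides with the paper's.
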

\begin{proof}
For fixed $\alpha\in\left(0,\tilde{\alpha}_V\right)$, let us apply Theorem \ref{mp-th} to $E_{r,s}$ with $\Gamma_{r,\alpha}$ given in Lemma \ref{mp-level-betage0},
\[
A(u)=\frac{1}{2}\int_{\Omega}|\nabla u|^2\mathrm{d}x+\frac{1}{2}\int_{\Omega}V(x)u^2\mathrm{d}x
-\frac{\beta}{p}\int_{\Omega}|u|^p\mathrm{d}x
\]
and
\[
B(u)=\frac{1}{2^*}\int_{\Omega}|u|^{2^*}\mathrm{d}x.
\]
Thanks to Lemma \ref{mp-betage0}, the assumptions in Theorem \ref{mp-th} hold. Hence, for almost every $s\in\left[\frac{1}{2},1\right]$, there exists a nonnegative bounded Palais-Smale sequence $\{u_n\}$:
\[
J_{r,s}(u_n)\to m_{r,s}(\alpha)\quad\text{and}\quad J'_{r,s}(u_n)|_{T_{u_n}S_{r,\alpha}}\to0,
\]
where $T_{u_n}S_{r,\alpha}$ denoted the tangent space of $S_{r,\alpha}$ at $u_n$.
Note that
\begin{equation}\label{un-ers-ge}
J'_{r,s}(u_n)+\lambda_nu_n\to0 \quad\text{in}\ H^{-1}(\Omega_r)
\end{equation}
and
\[
\lambda_n=-\frac{1}{\alpha}\bigg(\int_{\Omega_r}|\nabla u_n|^2\mathrm{d}x+\int_{\Omega_r}V(x) u^2_n\mathrm{d}x-\beta\int_{\Omega_r}|u_n|^p\mathrm{d}x-s\int_{\Omega_r}|u_n|^{2^*}\mathrm{d}x\bigg).
\]
is bounded, that is  $\lambda_n\le K$, where constant $K>0$.
Furthermore, there exist $u_0\in H_0^1(\Omega_r)$ and $\lambda\in\mathbb{R}$ such that up to a subsequence,
\[
\lambda_n\to\lambda,\quad u_n\rightharpoonup u_0\ \text{in}\ H_0^1(\Omega_r)\quad \text{and}\ u_n\to u_0\ \text{in}\ L^t(\Omega_r)\ \text{for all }2\le t<2^*,
\]
and $u_0$ satisfies
\begin{equation}\label{eq-u0-ge}
\begin{cases}
-\Delta u_0+Vu_0+\lambda u_0=s|u_0|^{2^*-2}u_0+\beta|u_0|^{p-2}u_0&\text{in}\,\Omega_r,\\
u_0\in H_0^1(\Omega_r),\quad\int_{\Omega_r}|u_0|^2\mathrm{d}x=\alpha.
\end{cases}
\end{equation}
We first claim that there exists a positive constant $C$ such that  $\int_{\Omega_r}|\nabla u_0|^2\mathrm{d}x\ge C$. Indeed, from \eqref{eq-u0-ge} and Sobolev embedding inequality, we have
\begin{eqnarray*}
(1-\|V_-\|_{\frac{N}{2}}S^{-1})\int_{\Omega_r}|\nabla u_0|^2\mathrm{d}x
&\le&\int_{\Omega_r}|\nabla u_0|^2\mathrm{d}x+\int_{\Omega_r}V(x)u_0^2\mathrm{d}x+\lambda_n\alpha\\
&=&s\int_{\Omega_r}|u_0|^{2^*}\mathrm{d}x+\beta\int_{\Omega_r}|u_0|^{p}\mathrm{d}x\\
&\le&s\bigg(S^{-1}\int_{\Omega_r}|\nabla u_0|^2\mathrm{d}x\bigg)^{\frac{2^*}{2}}
+C'\beta\bigg(\int_{\Omega_r}|\nabla u_0|^2\mathrm{d}x\bigg)^{\frac{p}{2}}.
\end{eqnarray*}
This proves the claim. Let $\alpha_0:=\frac{ C(p-2)}{2K}(1-\|V_-\|_{\frac{N}{2}}S^{-1})$, we next claim that $J_{r,s}(u_0)\ge0$ for $0<\alpha\le\alpha_0$.  In fact, from \eqref{un-ers-ge} and \eqref{eq-u0-ge}, we know that
\begin{eqnarray*}
J_{r,s}(u_0)&=&J_{r,s}(u_0)-\frac{1}{p}(J'_{r,s}(u_0)u_0+\lambda_n\alpha)\\
&\ge&(\frac{1}{2}-\frac{1}{p})\int_{\Omega_r}|\nabla u_0|^2\mathrm{d}x
+(\frac{1}{2}-\frac{1}{p})\int_{\Omega_r}V(x)u_0^2\mathrm{d}x
+(\frac{1}{p}-\frac{1}{2^*})s\int_{\Omega_r}|u_0|^{2^*}\mathrm{d}x-\frac{1}{p}\lambda_n\alpha\\
&\ge&(\frac{1}{2}-\frac{1}{p})(1-\|V_-\|_{\frac{N}{2}}S^{-1})\int_{\Omega_r}|\nabla u_0|^2\mathrm{d}x-\frac{1}{p}K\alpha\\
&\ge&(\frac{1}{2}-\frac{1}{p})(1-\|V_-\|_{\frac{N}{2}}S^{-1})C-\frac{1}{p}K\alpha_0=0.
\end{eqnarray*}
In view of \eqref{un-ers-ge}, we have
\[
J'_{r,s}(u_n)u_0+\lambda_n\int_{\Omega_r}u_nu_0\mathrm{d}x\to0\quad\text{and}\quad J'_{r,s}(u_n)u_n+\lambda_n\alpha\to0\quad\text{as}\quad n\to\infty.
\]
Owing to
\[
\underset{n\to\infty}{\lim}\int_{\Omega_r}V(x)u_n^2\mathrm{d}x=\int_{\Omega_r}V(x)u_0^2\mathrm{d}x.
\]
We conclude that
\begin{equation}\label{e'un0-ge}
o_n(1)=J'_{r,s}(u_n)(u_n-u_0)
=\int_{\Omega_r}\big(|\nabla u_n|^2-|\nabla u_0|^2\big)\mathrm{d}x-s\int_{\Omega_r}\big(|u_n|^{2^*}-|u_0|^{2^*}\big)\mathrm{d}x.
\end{equation}
Next, let us define $v_n:=u_n-u_0$, going to a subsequence, we assume that
\[
\int_{\Omega_r} |\nabla v_n|^2 \mathrm{d}x\to b\ge0\quad\text{as}\,n\to\infty.
\]
Note that  Brezis-Lieb Lemma and \eqref{e'un0-ge}, so we arrive that
\[
\int_{\Omega_r} |v_n|^{2^*} \mathrm{d}x\to \frac{b}{s}\quad\text{as}\,n\to\infty.
\]
The definition of $S$ indicates that
\[
\int_{\Omega_r} |\nabla v_n|^2 \mathrm{d}x\ge S\bigg(\int_{\Omega_r}|v_n|^{2^*}\mathrm{d}x\bigg)^{\frac{2}{2^*}},
\]
and moreover, $b=0$ or $b\ge s^{\frac{2-N}{2}}S^{\frac{N}{2}}$.
We argue by contradiction that $b\ge s^{\frac{2-N}{2}}S^{\frac{N}{2}}$,
\begin{eqnarray*}
m_{r,s}(\alpha)+o_n(1)&=&\underset{n\to\infty}{\lim}J_{r,s}(u_n)\\
&=&\underset{n\to\infty}{\lim}\Bigg\{\frac{1}{2}\int_{\Omega_r}|\nabla u_n|^2\mathrm{d}x
+\frac{1}{2}\int_{\Omega_r}V(x)u_n^2\mathrm{d}x
-\frac{s}{2^*}\int_{\Omega_r}|u_n|^{2^*}\mathrm{d}x
-\frac{\beta}{p}\int_{\Omega_r}|u_n|^p\mathrm{d}x\Bigg\}\\
&=&J_{r,s}(u_0)+\frac{1}{2}\int_{\Omega_r}\bigg(|\nabla u_n|^2-|\nabla u_0|^2\bigg)\mathrm{d}x
-\frac{s}{2^*}\int_{\Omega_r}\bigg(|u_n|^{2^*}-|u_0|^{2^*}\bigg)\mathrm{d}x\\
&\ge&\frac{b}{N}\ge s^{\frac{2-N}{2}}S^{\frac{N}{2}}\frac{1}{N}.
\end{eqnarray*}
However, this is not possible since $m_{r,s}(\alpha)<\frac{1}{N}s^{\frac{2-N}{2}}S^{\frac{N}{2}}$. Hence, $b=0$ and then $u_n\to u_0$ in $H_0^1(\Omega_r)$ as $n\to\infty$. Consequently, $J_{r,s}(u_0)=m_{r,s}(\alpha)$ and $u_0$ is a nonnegative normalized solution to \eqref{main-eq-s-omega}. Defining $\alpha_1:=\min\{\tilde{\alpha}_V,\alpha_0\}$, the proof is now complete.
\end{proof}

\begin{lemma}\label{nabla-u-bdd-j}
For fixed $\alpha>0$ the set of solutions $u\in S_{r,\alpha}$ of \eqref{main-eq-s-omega-2} is bounded uniformly in $s$ and $r$.
\end{lemma}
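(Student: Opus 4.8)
The plan is to combine the two natural identities satisfied by a solution --- the one obtained by testing \eqref{main-eq-s-omega-2} with $u$ and the Pohozaev identity obtained by testing with $x\cdot\nabla u$ --- together with the value of the energy $J_{r,s}(u)$, in such a way that both the critical term $\int_{\Omega_r}|u|^{2^*}\,\mathrm{d}x$ and the Lagrange multiplier $\lambda$ drop out. What remains is an inequality in which $\|\nabla u\|_2^2$ is dominated by the energy level, the potential contributions, the boundary flux and the mass-subcritical term $\int_{\Omega_r}|u|^p\,\mathrm{d}x$; the first three are uniformly bounded and the last is sublinear in $\|\nabla u\|_2^2$, which closes the estimate.

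Concretely, let $(\lambda,u)$ solve \eqref{main-eq-s-omega-2}. By elliptic regularity for the critical problem on the smooth domain $\Omega_r$, $u$ is regular enough (say $u\in H^2\cap C^1$) for the Pohozaev identity to hold, and since $u=0$ on $\partial\Omega_r$ we have $\nabla u=(\partial_\nu u)\nu$ there. Abbreviating $T=\int_{\Omega_r}|\nabla u|^2$, $W=\int_{\Omega_r}Vu^2$, $\widetilde W=\int_{\Omega_r}\tilde V u^2$, $C_*=\int_{\Omega_r}|u|^{2^*}$, $C_p=\int_{\Omega_r}|u|^p$ and $B=\int_{\partial\Omega_r}(x\cdot\nu)|\partial_\nu u|^2\,\mathrm{d}\sigma$, testing with $u$ gives $T+W+\lambda\alpha=sC_*+s\beta C_p$, while testing with $x\cdot\nabla u$ yields
\[
\tfrac{N-2}{2}T+\tfrac12 B+\tfrac12\widetilde W+\tfrac N2 W+\tfrac N2\lambda\alpha=\tfrac{Ns}{2^*}C_*+\tfrac{Ns\beta}{p}C_p .
\]
Eliminating $\lambda\alpha$ between these two relations gives $T=sC_*+\tfrac12 B+\tfrac12\widetilde W+\tfrac{Ns\beta(p-2)}{2p}C_p$, and inserting the corresponding value of $sC_*$ into $J_{r,s}(u)=\tfrac12 T+\tfrac12 W-\tfrac{s}{2^*}C_*-\tfrac{s\beta}{p}C_p$ also removes $C_*$, leaving
\[
\tfrac1N T=J_{r,s}(u)-\tfrac12 W-\tfrac{1}{2\cdot 2^*}B-\tfrac{1}{2\cdot 2^*}\widetilde W+\tfrac{s\beta}{p}\Big(1-\tfrac{(p-2)(N-2)}{4}\Big)C_p .
\]

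Now I would estimate the right-hand side uniformly in $s\in[\tfrac12,1]$ and $r>\tilde r_\alpha$. Since $\Omega$, hence $\Omega_r$, is star-shaped with respect to $0$ we have $x\cdot\nu\ge 0$, so $B\ge0$ and the boundary term may be discarded for an upper bound. Because $V$ and $\tilde V$ are bounded and $\|u\|_2^2=\alpha$, we get $|W|\le\|V\|_\infty\alpha$ and $|\widetilde W|\le\|\tilde V\|_\infty\alpha$, both independent of $r$. The Gagliardo--Nirenberg inequality gives $C_p\le C_{N,p}\,\alpha^{\frac{2p-N(p-2)}{4}}T^{\frac{N(p-2)}{4}}$, and $2<p<2+\tfrac4N$ forces the exponent $\tfrac{N(p-2)}{4}<1$, so this term is sublinear in $T$ (uniformly in $r$, as $C_{N,p}$ is the constant for $H^1(\mathbb{R}^N)$). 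Finally, the solutions of interest are those produced in Theorem \ref{beta>0-s-omega-ge}, which sit at the mountain-pass level, so by Lemma \ref{mp-level-betage0} one has $J_{r,s}(u)=m_{r,s}(\alpha)\le\tfrac1N s^{\frac{2-N}{2}}S^{\frac N2}\le\tfrac1N 2^{\frac{N-2}{2}}S^{\frac N2}$, uniformly in $s$ and $r$. Collecting these bounds yields $\tfrac1N T\le C_1+C_2\,T^{\frac{N(p-2)}{4}}$ with $C_1,C_2>0$ depending only on $N,p,\alpha,\beta,S,\|V\|_\infty,\|\tilde V\|_\infty$; since the power is strictly less than $1$, this self-improving inequality forces $T\le C$, and hence $\|u\|_{H^1}^2=T+\alpha$ is bounded uniformly in $s$ and $r$.

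The crux is the Sobolev-critical term: the energy does not bound $\int_{\Omega_r}|u|^{2^*}$ from above, so a direct energy estimate is hopeless and the Pohozaev identity is essential in order to trade $C_*$ for $T$. Two features make the scheme succeed. First, the boundary flux $B$ must carry the favorable sign, which is precisely where the star-shapedness of $\Omega_r$ enters. Second, the potential contributions must not reintroduce a term of order $T$; this is guaranteed by $V,\tilde V\in L^\infty$, which lets us bound $\int_{\Omega_r}Vu^2$ and $\int_{\Omega_r}\tilde V u^2$ by the fixed mass $\alpha$ rather than by $\|\nabla u\|_2^2$ through $\|V\|_{N/2}S^{-1}$ --- the latter route would only work under the stronger restriction $\|V_-\|_{N/2}<2S/N$. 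Keeping every constant $r$-independent is then routine, since $\|V\|_\infty$, $\|\tilde V\|_\infty$, the Gagliardo--Nirenberg constant and the min-max bound of Lemma \ref{mp-level-betage0} are all independent of $r$.
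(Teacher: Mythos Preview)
Your proof is correct and follows essentially the same route as the paper: combine the identity obtained by testing the equation with $u$ and the Pohozaev identity to eliminate both $\lambda$ and the critical term $\int_{\Omega_r}|u|^{2^*}$, use star-shapedness to drop the boundary flux, bound the potential terms via $\|V\|_\infty\alpha$ and $\|\tilde V\|_\infty\alpha$, bound $\int_{\Omega_r}|u|^p$ by Gagliardo--Nirenberg, and close with the uniform min-max bound $m_{r,s}(\alpha)\le \tfrac1N s^{\frac{2-N}{2}}S^{N/2}$ from Lemma~\ref{mp-level-betage0}. The only cosmetic difference is that the paper arranges the three relations as a chain of inequalities ending in a lower bound for $\tfrac{2^*}{N}m_{r,s}(\alpha)+\tfrac{\beta s(2^*-p)}{2p}\int|u|^p$, whereas you solve directly for $\tfrac1N\|\nabla u\|_2^2$; the content is identical.
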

\begin{proof}
Based on the fact that $(\lambda,u)\in\mathbb{R}\times S_{r,\alpha}$ is a solution of problem \eqref{main-eq-s-omega-2}, we know
\begin{equation}\label{eq-sol-bdd-ge}
\int_{\Omega_r}|\nabla u|^2\mathrm{d}x+\int_{\Omega_r}V(x)u^2\mathrm{d}x
+\lambda\int_{\Omega_r}|u|^2\mathrm{d}x=s\int_{\Omega_r}|u|^{2^*}\mathrm{d}x
+s\beta\int_{\Omega_r}|u|^p\mathrm{d}x
.
\end{equation}
In addition, the pohozaev identity leads to
\begin{eqnarray}\label{pohozaev-ge}
&&\frac{N-2}{2N}\int_{\Omega_r}|\nabla u|^2\mathrm{d}x+\frac{1}{2N}\int_{\partial\Omega_r}|\nabla u|^2(x\cdot\textbf{n})\mathrm{d}\sigma+\frac{1}{2N}\int_{\Omega_r}\tilde{V}u^2\mathrm{d}x
+\frac{1}{2}\int_{\Omega_r}Vu^2\mathrm{d}x\notag\\
&&\quad\quad=-\frac{\lambda}{2}\int_{\Omega_r}|u|^2\mathrm{d}x
+\frac{s}{2^*}\int_{\Omega_r}|u|^{2^*}\mathrm{d}x
+\frac{s\beta}{p}\int_{\Omega_r}|u|^p\mathrm{d}x,
\end{eqnarray}
where $\textbf{n}$ denotes the outward unit normal vector on $\partial\Omega_r$.
Then via the inequality \eqref{eq-sol-bdd-ge} and \eqref{pohozaev-ge}, we deduce that
\begin{eqnarray*}
\frac{1}{N}\int_{\Omega_r}|\nabla u|^2\mathrm{d}x&-&\frac{1}{2N}\int_{\partial\Omega_r}|\nabla u|^2(x\cdot\textbf{n})\mathrm{d}\sigma-\frac{1}{2N}\int_{\Omega_r}(\nabla V\cdot x)u^2\mathrm{d}x\\
&=&\frac{s}{N}\int_{\Omega_r}|u|^{2^*}\mathrm{d}x+\frac{\beta(p-2)s}{2p}\int_{\Omega_r}|u|^p\mathrm{d}x\\
&\ge&\frac{2^*}{N}\bigg(\frac{s}{2^*}\int_{\Omega_r}|u|^{2^*}\mathrm{d}x
+\frac{\beta s}{p}\int_{\Omega_r}|u|^p\mathrm{d}x\bigg)
+\frac{\beta s(p-2^*)}{2p}\int_{\Omega_r}|u|^p\mathrm{d}x\\
&=&\frac{2^*}{N}\bigg(\frac{1}{2}\int_{\Omega_r}|\nabla u|^2\mathrm{d}x
+\frac{1}{2}\int_{\Omega_r}V|u|^2\mathrm{d}x-m_{r,s(\alpha)}\bigg)+\frac{\beta s(p-2^*)}{2p}\int_{\Omega_r}|u|^p\mathrm{d}x,
\end{eqnarray*}
where we have used $\beta>0$.  Recall that  $\Omega_r$ is starshaped with respect to 0, so $x\cdot\textbf{n}\ge0$ for any $x\in\partial\Omega_r$. Thereby, applying the Gagliardo-Nirenberg inequality, we obtain
\begin{eqnarray*}
&&\frac{2^*}{N}m_{r,s}(\alpha)+\frac{\beta s(2^*-p)C_{N,p}}{2p}\alpha^{\frac{2p-N(p-2)}{4}}
\bigg(\int_{\Omega}|\nabla u|^2\mathrm{d}x\bigg)^{\frac{N(p-2)}{4}}\\
&\ge&\frac{2^*}{N}m_{r,s}(\alpha)+\frac{\beta s(2^*-p)}{2p}\int_{\Omega_r}|u|^p\mathrm{d}x\\
&=&\frac{2^*}{N}\bigg(\frac{1}{2}\int_{\Omega_r}|\nabla u|^2\mathrm{d}x
+\frac{1}{2}\int_{\Omega_r}V|u|^2\mathrm{d}x\bigg)\\
&&\quad-\bigg(\frac{1}{N}\int_{\Omega_r}|\nabla u|^2\mathrm{d}x
-\frac{1}{2N}\int_{\partial\Omega_r}|\nabla u|^2(x\cdot\textbf{n})\mathrm{d}\sigma-\frac{1}{2N}\int_{\Omega_r}(\nabla V\cdot x)u^2\mathrm{d}x\bigg)\\
&\ge&2^*\int_{\Omega_r}|\nabla u|^2\mathrm{d}x-\alpha\bigg(\frac{1}{2N}\|\nabla V\cdot x\|_\infty+\frac{1}{N-2}\|V\|_\infty\bigg).
\end{eqnarray*}
As a consequence of $2<p<2+\frac{4}{N}$ and  Lemma \ref{mp-betage0}, we could bound $\int_{\Omega_r}|\nabla u|^2\mathrm{d}x$ uniformly in $s$ and $r$.
\end{proof}

\begin{lemma}\label{beta>0-e-strong-ge}
Assume that $0<\alpha<\alpha_1$ and  $r>\tilde{r}_\alpha$, where $\alpha_1$ and $\tilde{r}_\alpha$ are given in Theorem \ref{beta>0-s-omega-ge} and  Lemma \ref{mp-betage0}, respectively. Equation \eqref{main-eq-omega} admits a solution $(\lambda_{r,\alpha},u_{r,\alpha})$ such that $u_{r,\alpha}>0$ in $\Omega_r$.
\end{lemma}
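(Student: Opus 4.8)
The plan is to obtain the $s=1$ solution as a limit of the solutions produced by Theorem \ref{beta>0-s-omega-ge} as $s\nearrow 1$. Fix $r>\tilde r_\alpha$ and $0<\alpha<\alpha_1$. By Theorem \ref{beta>0-s-omega-ge} there is a sequence $s_n\in[\tfrac12,1]$ with $s_n\to1$ along which \eqref{main-eq-s-omega-2} has a positive solution $(\lambda_n,u_n):=(\lambda_{r,s_n},u_{r,s_n})$ satisfying $J_{r,s_n}(u_n)=m_{r,s_n}(\alpha)$. Lemma \ref{nabla-u-bdd-j} bounds $\|\nabla u_n\|_2$ uniformly in $s$, and since $\|u_n\|_2^2=\alpha$ the sequence $\{u_n\}$ is bounded in $H_0^1(\Omega_r)$; the multiplier identity $\int_{\Omega_r}|\nabla u_n|^2+\int_{\Omega_r}Vu_n^2+\lambda_n\alpha=s_n\int_{\Omega_r}|u_n|^{2^*}+s_n\beta\int_{\Omega_r}|u_n|^p$ together with the Sobolev and Gagliardo--Nirenberg inequalities then shows $\{\lambda_n\}$ is bounded. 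Passing to a subsequence, $\lambda_n\to\lambda_{r,\alpha}$ and $u_n\rightharpoonup u_{r,\alpha}$ in $H_0^1(\Omega_r)$, with $u_n\to u_{r,\alpha}$ in $L^t(\Omega_r)$ for $2\le t<2^*$ and a.e. As $\Omega_r$ is bounded, the embedding $H_0^1(\Omega_r)\hookrightarrow L^2(\Omega_r)$ is compact, so $\|u_{r,\alpha}\|_2^2=\alpha$, i.e. $u_{r,\alpha}\in S_{r,\alpha}$; in particular $u_{r,\alpha}\not\equiv0$.

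Next I would pass to the limit in the weak formulation of \eqref{main-eq-s-omega-2}. The linear terms converge by weak (resp. strong $L^2$) convergence, the subcritical term by strong $L^p$ convergence, and the critical term because $|u_n|^{2^*-2}u_n\rightharpoonup|u_{r,\alpha}|^{2^*-2}u_{r,\alpha}$ in $L^{2^*/(2^*-1)}(\Omega_r)$ (a.e. convergence plus boundedness). Hence $(\lambda_{r,\alpha},u_{r,\alpha})$ is a weak solution of \eqref{main-eq-omega}. Arguing exactly as in the corresponding step of Theorem \ref{beta>0-s-omega-ge}, the fact that the nontrivial $u_{r,\alpha}$ solves the limit equation gives $\int_{\Omega_r}|\nabla u_{r,\alpha}|^2\mathrm{d}x\ge C>0$; then, writing $J_{r,1}(u_{r,\alpha})=J_{r,1}(u_{r,\alpha})-\tfrac1p\big(J'_{r,1}(u_{r,\alpha})u_{r,\alpha}+\lambda_{r,\alpha}\alpha\big)$ and using $\lambda_{r,\alpha}\le K$, one obtains $J_{r,1}(u_{r,\alpha})>0$ for every $0<\alpha<\alpha_1\le\alpha_0$ (strict, since $\alpha<\alpha_0$).

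To upgrade weak to strong convergence I would run a Brezis--Lieb analysis on $v_n:=u_n-u_{r,\alpha}$. Testing the equation for $u_n$ against $v_n$ and letting $n\to\infty$ yields $\int_{\Omega_r}|\nabla v_n|^2\to b$ and, since $s_n\to1$, $\int_{\Omega_r}|v_n|^{2^*}\to b$, so by the definition of $S$ either $b=0$ or $b\ge S^{N/2}$. Splitting the energy with Brezis--Lieb gives $\lim_n m_{r,s_n}(\alpha)=J_{r,1}(u_{r,\alpha})+\tfrac bN$. If $b\ge S^{N/2}$, combining $J_{r,1}(u_{r,\alpha})>0$ with the upper bound $m_{r,s_n}(\alpha)\le\tfrac1N s_n^{(2-N)/2}S^{N/2}$ of Lemma \ref{mp-level-betage0} (whose right-hand side tends to $\tfrac1N S^{N/2}$) produces $\tfrac1N S^{N/2}\ge J_{r,1}(u_{r,\alpha})+\tfrac1N S^{N/2}>\tfrac1N S^{N/2}$, a contradiction. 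Hence $b=0$, $u_n\to u_{r,\alpha}$ strongly in $H_0^1(\Omega_r)$, and $(\lambda_{r,\alpha},u_{r,\alpha})$ is a genuine normalized solution of \eqref{main-eq-omega}. Finally, since each $u_n\ge0$ we have $u_{r,\alpha}\ge0$, and elliptic regularity with the strong maximum principle upgrades this to $u_{r,\alpha}>0$ in $\Omega_r$.

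The main obstacle is the loss of compactness caused by the Sobolev-critical term, through which a bubble of energy $\tfrac1N S^{N/2}$ could split off. The decisive point, and the place where the restriction $\alpha<\alpha_1$ and the subcritical perturbation $\beta|u|^{p-2}u$ are used, is the strict energy gap: the mountain-pass level stays at or below $\tfrac1N S^{N/2}$ while $J_{r,1}(u_{r,\alpha})$ is strictly positive, which forces $b=0$. Controlling $\lambda_n$ uniformly (needed for $J_{r,1}(u_{r,\alpha})>0$) and checking that the weak limit respects the constraint are the supporting technical points; the former rests on Lemma \ref{nabla-u-bdd-j} and the latter on the compactness of the embedding on the bounded domain $\Omega_r$.
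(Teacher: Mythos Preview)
Your proposal is correct and follows essentially the same route as the paper: the paper's proof simply says to take a sequence $s_n\to1$ of parameters for which Theorem~\ref{beta>0-s-omega-ge} provides solutions, invoke the uniform bound of Lemma~\ref{nabla-u-bdd-j}, and then repeat the compactness argument of Theorem~\ref{beta>0-s-omega-ge} to pass to the limit. You have spelled out exactly this argument, including the Brezis--Lieb dichotomy and the use of $J_{r,1}(u_{r,\alpha})>0$ (strict, thanks to $\alpha<\alpha_1\le\alpha_0$) to rule out the bubble, which is precisely what the paper means by ``a technique similar to the proof of Theorem~\ref{beta>0-s-omega-ge}''.
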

\begin{proof}
 For fixed $0<\alpha<\alpha_1$ and  $r>\tilde{r}_\alpha$. Based on the preceding Theorem \ref{beta>0-s-omega-ge} and Lemma \ref{nabla-u-bdd-j}, we demonstrate that there exist solutions $\{(\lambda_{r,\alpha,s},u_{r,\alpha,s})\}$ to problem \eqref{main-eq-s-omega-2} for $s\in\left[1/2,1\right]$, and $\{u_{r,\alpha,s}\}\subset S_{r,\alpha}$ is bounded. Now, using a technique similar to the proof of  Theorem \ref{beta>0-s-omega-ge}, we obtain $u_{r,\alpha}\in S_{r,\alpha}$ and $\lambda_{r,\alpha}\in\mathbb{R}$ such that going to a subsequence, $\lambda_{r,\alpha,s}\to\lambda_{r,\alpha}$ and $u_{r,\alpha,s}\to u_{r,\alpha}$ in $H_0^{1}(\Omega_r)$ as $s\to1$. This combined with the strong maximum principle leads to that $u_{r,\alpha}>0$ is a solution of problem \eqref{main-eq-omega}.
\end{proof}

\noindent\textbf{Proof of Theorem \ref{beta>0-e>0-Omega}} The proof is an immediate consequence of Theorem \ref{beta>0-s-omega-ge}, Lemma \ref{beta>0-e-strong-ge} and Remark \ref{ur-ubdd-beta}.\qed

\section{Proof of Theorem \ref{betale0-e>0-Omega}}
\setcounter{equation}{0}
\setcounter{theorem}{0}	
In this section, we always assume that the assumptions of Theorem \ref{betale0-e>0-Omega} hold. In
order to obtain a bounded Palais-Smale sequence, we will use the monotonicity trick.
For $\frac{1}{2}\le s\le1$, the energy functional $E_{r,s}:S_{r,\alpha}\to\mathbb{R}$ is defined by
\[
E_{r,s}(u)=\frac{1}{2}\int_{\Omega_r}|\nabla u|^2dx+\frac{1}{2}\int_{\Omega_r}V(x)u^2dx
-\frac{s}{2^*}\int_{\Omega_r}|u|^{2^*}dx-\frac{\beta}{p}\int_{\Omega_r}|u|^pdx.
\]
Note that  if $u\in S_{r,\alpha}$ is a critical point of $E_{r,\alpha}$, then there exists $\lambda\in\mathbb{R}$ such that $(\lambda,u)$ is a solution of the equation
\begin{equation}\label{main-eq-s-omega}
\begin{cases}
-\Delta u+V(x)u+\lambda u=s|u|^{2^*-2}u+\beta|u|^{p-2}u&\text{in}\ \Omega_r,\\
u\in H^{1}_{0}(\Omega_r),\ \int_{\Omega_r}|u|^2dx=\alpha.
\end{cases}
\end{equation}

\begin{lemma}\label{betale0-mp-g}
For any $\alpha>0$, there exists $r_\alpha>0$ and $u^0,u^1\in S_{r,\alpha}$ such that
\begin{description}
  \item[(i)] $E_{r,s}(u^1)\le0$ for any $r>r_\alpha$ and $s\in\left[\frac{1}{2},1\right]$,
  \[
  \|\nabla u^0\|_2^2<\bigg(1-\|V_-\|_{\frac{N}{2}}S^{-1}\bigg)^{\frac{N-2}{2}}S^{\frac{N}{2}}<\|\nabla u^1\|_2^2
  \]
  and
  \[
  E_{r,s}(u^0)<\frac{1}{N}\bigg(1-\|V_-\|_{\frac{N}{2}}S^{-1}\bigg)^{\frac{N}{2}}S^{\frac{N}{2}}
  \]
  \item[(ii)] If $u\in S_{r,\alpha}$ satisfies
  \[
  \|\nabla u\|_2^2=\bigg(1-\|V_-\|_{\frac{N}{2}}S^{-1}\bigg)^{\frac{N-2}{2}}S^{\frac{N}{2}}
  \]
  then there holds
  \[
  E_{r,s}(u)\ge\frac{1}{N}\bigg(1-\|V_-\|_{\frac{N}{2}}S^{-1}\bigg)^{\frac{N}{2}}S^{\frac{N}{2}}
  \]
\end{description}
\end{lemma}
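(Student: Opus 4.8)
The plan is to follow the scheme of Lemma \ref{mp-betage0}, but to exploit the sign condition $\beta\le0$, which makes the term $-\frac{\beta}{p}\int_{\Omega_r}|u|^p\,\mathrm{d}x$ nonnegative and hence helpful rather than competing. Consequently the mountain--pass geometry will be governed purely by the balance between the Dirichlet term and the Sobolev critical term, and no smallness restriction on $\alpha$ is needed, reflecting that the threshold and level in the statement depend only on $V$ and $S$. Write $a:=1-\|V_-\|_{\frac{N}{2}}S^{-1}$, which is positive by $(V_0)$, and set
\[
g(t):=\frac{a}{2}t-\frac{S^{-2^*/2}}{2^*}t^{\frac{2^*}{2}},\qquad t\ge0.
\]
A direct computation shows that $g$ has a unique maximum at $t_*=a^{\frac{N-2}{2}}S^{\frac{N}{2}}=(1-\|V_-\|_{\frac{N}{2}}S^{-1})^{\frac{N-2}{2}}S^{\frac{N}{2}}$ with $g(t_*)=\frac{1}{N}a^{\frac{N}{2}}S^{\frac{N}{2}}$; these are exactly the threshold and level appearing in the statement.

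To prove (ii) I would discard the nonnegative $p$--term, use $\int_{\Omega_r}Vu^2\,\mathrm{d}x\ge\int_{\Omega_r}V_-u^2\,\mathrm{d}x\ge-\|V_-\|_{\frac{N}{2}}S^{-1}\|\nabla u\|_2^2$ (H\"older and Sobolev), and bound $\frac{s}{2^*}\int_{\Omega_r}|u|^{2^*}\,\mathrm{d}x\le\frac{s}{2^*}S^{-2^*/2}\|\nabla u\|_2^{2^*}$. Since $s\le1$ this yields $E_{r,s}(u)\ge g(\|\nabla u\|_2^2)$. When $\|\nabla u\|_2^2=t_*$ the right--hand side equals $g(t_*)$, which is precisely the claimed lower bound, so (ii) follows at once.

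For (i) I would reuse the concentrating family $v_\varepsilon=\frac{\sqrt\alpha}{\|u_\varepsilon\|_2}U_\varepsilon\in S_1^\alpha\cap C_0^\infty(\Omega)$ and its $L^2$--preserving rescalings $v_t(x)=t^{N/2}v_\varepsilon(tx)$, which lie in $S_{1/t,\alpha}$. Using $\frac12\int_{\Omega_{1/t}}Vv_t^2\,\mathrm{d}x\le\frac12\|V\|_{\frac{N}{2}}S^{-1}\|\nabla v_t\|_2^2$ together with $s\ge\frac12$ and the scaling relations $\|\nabla v_t\|_2^2=t^2\|\nabla v_\varepsilon\|_2^2$, $\|v_t\|_{2^*}^{2^*}=t^{2^*}\|v_\varepsilon\|_{2^*}^{2^*}$, $\|v_t\|_p^p=t^{N(p-2)/2}\|v_\varepsilon\|_p^p$, one obtains
\[
E_{1/t,s}(v_t)\le h(t):=\tfrac12\big(1+\|V\|_{\frac{N}{2}}S^{-1}\big)t^2\|\nabla v_\varepsilon\|_2^2+\tfrac{|\beta|}{p}t^{\frac{N(p-2)}{2}}\|v_\varepsilon\|_p^p-\tfrac{1}{2\cdot2^*}t^{2^*}\|v_\varepsilon\|_{2^*}^{2^*}.
\]
Since $\tfrac{N(p-2)}{2}<2<2^*$, we have $h(t)>0$ for small $t>0$ and $h(t)\to-\infty$ as $t\to\infty$; let $t_1$ be large enough that $h(t_1)\le0$ and $t_1^2\|\nabla v_\varepsilon\|_2^2>t_*$, and choose $t'>0$ small enough that $t'^2\|\nabla v_\varepsilon\|_2^2<t_*$ and $h(t')<g(t_*)$, which is possible because $h(t')\to0$ while $g(t_*)>0$. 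Setting $u^1:=v_{t_1}$, $u^0:=v_{t'}$ and $r_\alpha:=\max\{1/t_1,1/t'\}$, the supports fit inside $\Omega_r$ for $r>r_\alpha$, the functional values are independent of such $r$, and all inequalities of (i) follow.

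The main obstacle is the now positive $p$--term $\tfrac{|\beta|}{p}t^{N(p-2)/2}\|v_\varepsilon\|_p^p$ in $h$, which, unlike in the $\beta>0$ case, works against the upper bound; however its subcritical exponent $\tfrac{N(p-2)}{2}<2^*$ guarantees it is dominated by the critical term for large $t$, so $h$ still changes sign and the barrier persists. The remaining care is bookkeeping: verifying via Lemma \ref{u-varepsilon-es} that $t_1$ and $t'$ can be chosen to straddle $t_*$ while keeping $h(t')$ below the level, which is routine once $\varepsilon$ is fixed small.
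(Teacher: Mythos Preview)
Your proposal is correct and follows essentially the same approach as the paper: the lower bound in (ii) is obtained exactly as in the paper by dropping the nonnegative $p$--term and using H\"older/Sobolev to arrive at $E_{r,s}(u)\ge g(\|\nabla u\|_2^2)$, and (i) is built from the same concentrating family $v_t(x)=t^{N/2}v_\varepsilon(tx)$ and an auxiliary upper--barrier function $h(t)\ge E_{1/t,s}(v_t)$, choosing $u^0=v_{t'}$ at a small parameter and $u^1=v_{t_1}$ at a large parameter. The only notable difference is that you compute the $L^p$--contribution of $v_t$ directly via the scaling identity $\|v_t\|_p^p=t^{N(p-2)/2}\|v_\varepsilon\|_p^p$, whereas the paper bounds this term from above using the Gagliardo--Nirenberg inequality; your route is slightly cleaner but leads to the same qualitative $h$ and the same conclusion.
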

\begin{proof}
 Clearly the set $S_{r,\alpha}$ is path connected. Let
\[
v_\varepsilon(x):=\frac{\sqrt{\alpha}}{\|u_\varepsilon\|_2}u_\varepsilon(x).
\]
Notice that $v_\varepsilon(x)\in C_0^{\infty}(\Omega)$ and $v_\varepsilon(x)\in S_1^\alpha$.
 Setting $v_t(x)=t^{\frac{N}{2}}v_\varepsilon(tx)$ for $x\in\Omega_{\frac{1}{t}}$ and $t>0$ there holds
\begin{eqnarray}\label{e-vt-s-h}
E_{\frac{1}{t},s}(v_t)&\le&\frac{1}{2}t^2\bigg(1+\|V\|_{\frac{N}{2}}S^{-1}\bigg)\int_{\Omega}|\nabla v_\varepsilon|^2\mathrm{d}x-\frac{\beta C_{N,p}}{p}\alpha^{\frac{2p-N(p-2)}{4}}\bigg(t^2\int_{\Omega}|\nabla v_\varepsilon|^2\mathrm{d}x\bigg)^{\frac{N(p-2)}{4}}\notag\\
&&\quad-\frac{1}{22^*}t^{2^*}\int_{\Omega}|v_\varepsilon|^{2^*}\mathrm{d}x\\
&\le& h(t)\notag,
\end{eqnarray}
Note that since $2<p<2+\frac{4}{N}$ and $\beta<0$, there exist $0<T_\alpha<t_\alpha$ such that $h(t_\alpha)=0$, $h(t)<0$ for any $t>t_\alpha$, $h(t)>0$ for any $0<t<t_\alpha$ and $h(T_\alpha)={\max}_{t\in\mathbb{R}^+} h(t)$. As a consequence, there holds
\begin{equation}\label{htalpha0}
E_{r,s}(v_{t_\alpha})=E_{\frac{1}{t_{\alpha}},s}(v_{t_\alpha})\le h(t_\alpha)=0
\end{equation}
for any $r\ge\frac{1}{t_\alpha}$ and $s\in\left[\frac{1}{2},1\right]$. Moreover, there exists $0<t_1<T_\alpha$ such that for any $t\in\left[0,t_1\right)$,
\begin{equation}\label{htupbdd}
h(t)<h(t_1)\le\frac{1}{N}\bigg(1-\|V_-\|_{\frac{N}{2}}S^{-1}\bigg)^{\frac{N}{2}}S^{\frac{N}{2}}.
\end{equation}

On the other hand, it follows from the Gagliardo-Nirenberg inequality and the H\"{o}lder inequality that
\begin{equation}\label{ers-bel}
E_{r,s}(u)\ge\frac{1}{2}\bigg(1-\|V_-\|_{\frac{N}{2}}S^{-1}\bigg)\int_{\Omega_r}|\nabla u|^2dx-\frac{S^{-\frac{2^*}{2}}}{2^*}\bigg(\int_{\Omega_r}|\nabla u|^2dx\bigg)^{\frac{2^*}{2}}.
\end{equation}
Let
\[
f(t):=\frac{1}{2}\bigg(1-\|V_-\|_{\frac{N}{2}}S^{-1}\bigg)t-\frac{S^{-\frac{2^*}{2}}}{2^*}
t^{\frac{2^*}{2}}
\]
and
\[
\tilde{t}:=\bigg(1-\|V_-\|_{\frac{N}{2}}S^{-1}\bigg)^{\frac{N-2}{2}}S^{\frac{N}{2}}.
\]
Then $f$ is increasing in $(0,\tilde{t})$ and decreasing in $(\tilde{t},\infty)$, and
\[
f(\tilde{t})=\frac{1}{N}\bigg(1-\|V_-\|_{\frac{N}{2}}S^{-1}\bigg)^{\frac{N}{2}}S^{\frac{N}{2}}.
\]
For $r>\tilde{r}_\alpha:=
\max\Big\{\frac{1}{t_1},(\frac{2
S^{\frac{N}{2}}+10)}{\tilde{t}})^{\frac{1}{2}}\Big\}$, we have $v_{\frac{1}{\tilde{r}_\alpha}}\in S_{\tilde{r}_\alpha,\alpha}\subset S_{r,\alpha}$ and
\begin{equation}\label{nabla-u0}
\|\nabla v_{\frac{1}{\tilde{r}_\alpha}}\|_2^2=\bigg(\frac{1}{\tilde{r}_\alpha}\bigg)^2\|\nabla v_\varepsilon\|_2^2<\bigg(1-\|V_-\|_{\frac{N}{2}}S^{-1}\bigg)^{\frac{N-2}{2}}S^{\frac{N}{2}}.
\end{equation}
Moreover, there holds
\begin{equation}\label{e-ralpha-s-ht1}
E_{\tilde{r}_\alpha,s}(v_{\frac{1}{\tilde{r}_\alpha}})\le h\bigg(\frac{1}{\tilde{r}_\alpha}\bigg)<h(t_1).
\end{equation}
Setting $u^0=v_{\frac{1}{\tilde{r}_\alpha}}$, $u^1=v_{t_\alpha}$ and
\[
r_\alpha=\max\Big\{\frac{1}{t_\alpha},\tilde{r}_\alpha\Big\},
\]
the statement (i) holds due to  \eqref{htalpha0}-\eqref{htupbdd} and \eqref{nabla-u0}-\eqref{e-ralpha-s-ht1}.
(ii) Note that statement (ii) holds by \eqref{ers-bel} and a direct calculation.
\end{proof}

\begin{lemma}\label{mp-level}
 Suppose that the assumptions in Lemma \ref{betale0-mp-g} are in force. Set
  \[
  m_{r,s}(\alpha)=\underset{\gamma\in\Gamma_{r,\alpha}}{\inf}\,
  \underset{t\in\left[0,1\right]}{\sup}\,E_{r,s}(\gamma(t)),
  \]
  with
  \[
  \Gamma_{r,\alpha}=\Big\{\gamma\in C(\left[0,1\right],S_{r,\alpha}):\gamma(0)=u^0,\gamma(1)=u^1\Big\}.
  \]
  Then
  \[
  \frac{1}{N}\bigg(1-\|V_-\|_{\frac{N}{2}}S^{-1}\bigg)^{\frac{N}{2}}S^{\frac{N}{2}}\le m_{r,s}(\alpha)<\frac{1}{N}s^{\frac{2-N}{2}}S^{\frac{N}{2}}.
  \]
\end{lemma}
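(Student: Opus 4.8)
The plan is to prove the two inequalities separately, following the template of Lemma~\ref{mp-level-betage0} but keeping in mind that when $\beta\le0$ the term $-\frac{\beta}{p}\int_{\Omega_r}|u|^p\,\mathrm{d}x$ is nonnegative. This makes the lower bound slightly cleaner (it can simply be dropped) but removes the help it gave in the $\beta>0$ case, which is exactly where the difficulty will reappear.

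For the left inequality I would argue by a mountain-pass barrier. Put $\tilde t:=(1-\|V_-\|_{\frac N2}S^{-1})^{\frac{N-2}{2}}S^{\frac N2}$. By Lemma~\ref{betale0-mp-g}(i) the endpoints satisfy $\|\nabla u^0\|_2^2<\tilde t<\|\nabla u^1\|_2^2$, and since $S_{r,\alpha}$ is path connected the continuous function $\tau\mapsto\|\nabla\gamma(\tau)\|_2^2$ must hit the value $\tilde t$ at some $\tau^\ast\in(0,1)$ for every $\gamma\in\Gamma_{r,\alpha}$. At such a point Lemma~\ref{betale0-mp-g}(ii) (which is precisely the statement, via \eqref{ers-bel}, that $f$ attains its maximum $\frac1N(1-\|V_-\|_{\frac N2}S^{-1})^{\frac N2}S^{\frac N2}$ at $\tilde t$) forces $E_{r,s}(\gamma(\tau^\ast))\ge\frac1N(1-\|V_-\|_{\frac N2}S^{-1})^{\frac N2}S^{\frac N2}$. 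Thus $\sup_{t\in[0,1]}E_{r,s}(\gamma(t))$ is bounded below by this number for each admissible $\gamma$, and taking the infimum over $\Gamma_{r,\alpha}$ yields the lower bound on $m_{r,s}(\alpha)$.

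For the right inequality I would test the min-max with the explicit dilation path already built in Lemma~\ref{betale0-mp-g}, namely $\gamma_0(\tau):x\mapsto\theta(\tau)^{\frac N2}v_\varepsilon(\theta(\tau)x)$ with $\theta(\tau)=\tau t_\alpha+(1-\tau)\tilde r_\alpha^{-1}$, which joins $u^0=v_{1/\tilde r_\alpha}$ to $u^1=v_{t_\alpha}$ inside $S_{r,\alpha}$, so $\gamma_0\in\Gamma_{r,\alpha}$. Since $v_\theta$ is supported in $\Omega_{1/\theta}\subset\Omega_r$, this reduces the estimate to $m_{r,s}(\alpha)\le\max_{t>0}E_{1/t,s}(v_t)$. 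Writing out the scaling, the leading balance between the $t^2$ gradient term and the $t^{2^*}$ critical term gives $\frac1N s^{\frac{2-N}{2}}\big(\|\nabla v_\varepsilon\|_2^2/\|v_\varepsilon\|_{2^*}^2\big)^{N/2}$, and by Lemma~\ref{u-varepsilon-es}(i) the Sobolev quotient of $v_\varepsilon$ equals $S+O(\varepsilon^{N-2})$ (the normalization cancels in the quotient). One then has to show that the remaining contributions are absorbed so that the full expression stays \emph{strictly} below $\frac1N s^{\frac{2-N}{2}}S^{\frac N2}$.

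The main obstacle is exactly this last step. In the case $\beta>0$ (Lemma~\ref{mp-level-betage0}) the subcritical term enters $E_{1/t,s}(v_t)$ with a negative sign and, after the $L^2$-normalization of $v_\varepsilon$ and the profile estimate of Lemma~\ref{u-varepsilon-es}(ii), supplies a genuinely negative contribution that defeats the positive $O(\varepsilon^{N-2})$ error produced by the cut-off bubble; this is what makes the strict inequality hold. When $\beta\le0$ that sign is reversed: the $|u|^p$-term is now a nonnegative contribution that can only be \emph{controlled}, not exploited, so the gain needed to beat the critical threshold must be found elsewhere, i.e.\ in the slack $s^{\frac{2-N}{2}}\ge1$ coming from $s\le1$ together with the potential term. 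I expect the delicate point to be the choice of the concentration parameter $\varepsilon$ (depending on $\alpha$, $s$ and the data) that keeps both $\|v_\varepsilon\|_p^p$ and $\frac12\int_{\Omega_r}V v_\varepsilon^2\,\mathrm{d}x$ small relative to that slack; carefully weighing the competing powers of $\varepsilon$ furnished by Lemma~\ref{u-varepsilon-es}(i)--(ii) against the $O(\varepsilon^{N-2})$ loss is where the real work lies, and is the step most likely to require the restriction to small $\alpha$ used in the statement.
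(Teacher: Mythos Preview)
Your lower-bound argument is correct and coincides with the paper's. For the upper bound you also set up the same dilation path $\gamma_0$ and correctly extract the leading term $\tfrac{1}{N}s^{(2-N)/2}S^{N/2}$ from the balance between $t^2\|\nabla v_\varepsilon\|_2^2$ and $t^{2^*}\|v_\varepsilon\|_{2^*}^{2^*}$. You are also right that when $\beta\le0$ the subcritical term no longer supplies the negative correction it did in Lemma~\ref{mp-level-betage0}. Where your proposal breaks down is in the mechanism you propose for recovering the \emph{strict} inequality: there is no ``slack $s^{(2-N)/2}\ge1$'' to exploit, because the threshold you must undercut is $\tfrac{1}{N}s^{(2-N)/2}S^{N/2}$ itself, not $\tfrac{1}{N}S^{N/2}$. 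At $s=1$ your proposed source of gain disappears entirely and you are left with $m_{r,1}(\alpha)\le\tfrac{1}{N}S^{N/2}+(\text{nonnegative remainders})$, which gives nothing. Your expectation that a small-$\alpha$ restriction rescues this step is also misplaced: Lemma~\ref{mp-level} is stated and proved for every $\alpha>0$; the restriction to small $\alpha$ enters only later, in Theorem~\ref{beta>0-s-omega}.

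The paper's route is an explicit $\varepsilon$-power comparison rather than any $s$-dependent slack. After writing $\max_{t} g(t)=\tfrac{1}{N}s^{(2-N)/2}S^{N/2}+O(\varepsilon^{N-2})$, it isolates the full remainder
\[
\frac{1}{2}\int_\Omega V v_\varepsilon^2\;-\;\frac{\beta}{p}\,t_\varepsilon^{\frac{N(p-2)}{2}}\int_\Omega|v_\varepsilon|^p\;+\;O(\varepsilon^{N-2}),
\]
first proves that the maximizer $t_\varepsilon$ of the one-variable profile $H(t)$ stays trapped in a fixed interval $[T_0,T_1]$ (via $H'(t_\varepsilon)=0$ and Young's inequality, so that $t_\varepsilon^{N(p-2)/2}\le T_1^{N(p-2)/2}$), and then inserts the precise rates for $\|u_\varepsilon\|_2^2$ and $\|u_\varepsilon\|_p^p$ from Lemma~\ref{u-varepsilon-es}, treating $N\ge5$ and $N=4$ separately because the order of $\|u_\varepsilon\|_2^2$ changes (polynomial versus $\varepsilon^2|\ln\varepsilon|$). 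The decisive observation is that among the competing powers of $\varepsilon$ the smallest exponent is $N-\tfrac{(N-2)p}{2}<\min\{2,N-2\}$ (equivalently $p>2$), so the remainder is governed by a single dominant term whose sign settles the strict inequality for $\varepsilon$ small. This quantitative power-counting in $\varepsilon$ is the missing ingredient in your plan; merely ``keeping $\|v_\varepsilon\|_p^p$ and $\int V v_\varepsilon^2$ small relative to the slack'' cannot work when there is no slack.
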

\begin{proof}
  Since $E_{r,s}(u^1)\le0$ for any $\gamma\in\Gamma_{r,\alpha}$, we have
\[
\|\nabla \gamma(0)\|_2^2<\tilde{t}=\bigg(1-\|V_-\|_{\frac{N}{2}}S^{-1}\bigg)^{\frac{N-2}{2}}S^{\frac{N}{2}}<\|\nabla\gamma(1)\|_2^2.
\]
It then follows from \eqref{ers-bel} that
\[
\underset{t\in\left[0,1\right]}{\max}E_{r,s}(\gamma(t))\ge f(\tilde{t})=\frac{1}{N}\bigg(1-\|V_-\|_{\frac{N}{2}}S^{-1}\bigg)^{\frac{N}{2}}S^{\frac{N}{2}}
\]
for any $\gamma\in\Gamma_{r,\alpha}$, hence the first inequality  holds. For the second inequality,  we define a path $\gamma_0\in\Gamma_{r,\alpha}$ by
\[
\gamma_0(\tau):\Omega_r\to\mathbb{R},\quad x\mapsto\bigg(\tau t_\alpha+(1-\tau)\frac{1}{\tilde{r}_\alpha}\bigg)^{\frac{N}{2}}v_\varepsilon\bigg(\bigg(\tau t_\alpha+(1-\tau)\frac{1}{\tilde{r}_\alpha}\bigg)x\bigg)
\]
for $\tau\in\left[0,1\right]$. Then by the definition of $m_{r,s}(\alpha)$, we have
\begin{eqnarray*}
m_{r,s}(\alpha)&=&\underset{\gamma\in\Gamma_{r,\alpha}}{\inf}\,
\underset{t\in\left[0,1\right]}{\sup}\,E_{r,s}(\gamma(t))
\le\underset{t\in\left[0,1\right]}{\sup}E_{r,s}(\gamma_0(t))\\
&\le&\underset{t\ge0}{\max}E_{r,s}(v_t)=\underset{t\ge0}{\max}E_{\frac{1}{t},s}(v_t)\\
&\le&\underset{t\ge0}{\max}\Bigg\{\frac{1}{2}t^2\int_{\Omega}|\nabla v_\varepsilon|^2\mathrm{d}x+\frac{1}{2}\int_{\Omega}V(x)v_\varepsilon^2\mathrm{d}x
-\frac{\beta}{p}t^{\frac{N(p-2)}{2}}\int_{\Omega}|v_\varepsilon|^p\mathrm{d}x
-\frac{s}{2^*}t^{2^*}\int_{\Omega}|v_\varepsilon|^{2^*}\mathrm{d}x\Bigg\}\\
&\le&\underset{t\ge0}{\max}\Bigg\{g(t)+\frac{1}{2}\int_{\Omega}V(x)v_\varepsilon^2\mathrm{d}x
-\frac{\beta}{p}t^{\frac{N(p-2)}{2}}\int_{\Omega}|v_\varepsilon|^p\mathrm{d}x\Bigg\}\\
&\le&\underset{t\ge0}{\max}g(t)+\frac{1}{2}\int_{\Omega}V(x)v_\varepsilon^2\mathrm{d}x
-\frac{\beta}{p}t_\varepsilon^{\frac{N(p-2)}{2}}\int_{\Omega}|v_\varepsilon|^p\mathrm{d}x,
\end{eqnarray*}
where
\[
g(t):=\frac{1}{2}t^2\int_{\Omega}|\nabla v_\varepsilon|^2\mathrm{d}x-\frac{s}{2^*}t^{2^*}\int_{\Omega}|v_\varepsilon|^{2^*}\mathrm{d}x,
\]
\[
H(t):=g(t)+\frac{1}{2}\int_{\Omega}V(x)v_\varepsilon^2\mathrm{d}x
-\frac{\beta}{p}t^{\frac{N(p-2)}{2}}\int_{\Omega}|v_\varepsilon|^p\mathrm{d}x
\]
and $t_\varepsilon>0$ satisfy ${\max}_{t\ge0}H(t)=H(t_\varepsilon)$. We next \textbf{claim} that there exist $0<T_0<T_1$ such that $t_\varepsilon\in\left[T_0,T_1\right]$. Indeed, by a simple calculation,
\begin{eqnarray*}
H'(t)&=&t\int_{\Omega}|\nabla v_\varepsilon|^2\mathrm{d}x
-\frac{N\beta(p-2)}{2p}t^{\frac{N(p-2)}{2}-1}\int_{\Omega}|v_\varepsilon|^p\mathrm{d}x
-st^{2^*-1}\int_{\Omega}|v_\varepsilon|^{2^*}\mathrm{d}x\\
&=&t^{\frac{N(p-2)}{2}-1}\bigg(t^{2-\frac{N(p-2)}{2}}\int_{\Omega}|\nabla v_\varepsilon|^2\mathrm{d}x
-\frac{N\beta(p-2)}{2p}\int_{\Omega}|v_\varepsilon|^p\mathrm{d}x
-st^{2^*-\frac{N(p-2)}{2}}\int_{\Omega}|v_\varepsilon|^{2^*}\mathrm{d}x\bigg),
\end{eqnarray*}
and then $t_\varepsilon$ satisfy
\[
t_\varepsilon^{2-\frac{N(p-2)}{2}}\int_{\Omega}|\nabla v_\varepsilon|^2\mathrm{d}x
-\frac{N\beta(p-2)}{2p}\int_{\Omega}|v_\varepsilon|^p\mathrm{d}x
=st_\varepsilon^{2^*-\frac{N(p-2)}{2}}\int_{\Omega}|v_\varepsilon|^{2^*}\mathrm{d}x.
\]
It then follows from $\beta\le0$ and Young inequality that there exist $0<T_0<T_1$ such that $t_\varepsilon\in\left[T_0,T_1\right]$. Hence, by the definition of $v_\varepsilon$, we conclude that
\[
m_{r,s}(\alpha)\le\underset{t\ge0}{\max}g(t)+\frac{1}{2}\int_{\Omega}V(x)v_\varepsilon^2\mathrm{d}x
-\frac{\beta}{p}T_1^{\frac{N(p-2)}{2}}\int_{\Omega}|v_\varepsilon|^p\mathrm{d}x.
\]
In view of the estimates of Lemma \ref{u-varepsilon-es} (i), it holds that
\[
\underset{t\ge0}{\max}g(t)=s^{\frac{2-N}{2}}\frac{1}{N}S^{\frac{N}{2}}+O(\varepsilon^{N-2}).
\]
Summarizing, the proof of (iii) will be completed if we manage to show that, for $n\in\mathbb{N}$ large enough,
\[
\frac{1}{2}\int_{\Omega}V(x)v_{\varepsilon_n}^2\mathrm{d}x
-\frac{\beta}{p}T_1^{\frac{N(p-2)}{2}}\int_{\Omega}|v_{\varepsilon_n}|^p\mathrm{d}x
+O(\varepsilon_n^{N-2})<0.
\]
At this point we distinguish two cases.

\textbf{Case 1 $N\ge5$:} By Lemma \ref{u-varepsilon-es} we have that, for some $K_1>0$, $K_2>0$, as $n\to\infty$,
\[
\|v_{\varepsilon_n}\|_2^2=K_1\varepsilon_n^2+o(\varepsilon_n^{N-2})\quad\text{and}\quad
\|v_{\varepsilon_n}\|_q^2=K_2\varepsilon_n^{N-\frac{(N-2)p}{2}}+o(\varepsilon_n^{N-\frac{(N-2)p}{2}}).
\]
Thus, for some constants, $\tilde{K}_1>0$, $\tilde{K}_2>0$, for $n\in\mathbb{N}$ sufficiently large,
\[
\frac{1}{2}\int_{\Omega}V(x)v_{\varepsilon_n}^2\mathrm{d}x
-\frac{\beta}{p}T_1^{\frac{N(p-2)}{2}}\int_{\Omega}|v_{\varepsilon_n}|^p\mathrm{d}x
+O(\varepsilon_n^{N-2})
=\tilde{K}_1\varepsilon_n^2+\tilde{K}_2\varepsilon_n^{N-\frac{(N-2)p}{2}}+O(\varepsilon_n^{N-2})<0
\]
since $N-\frac{(N-2)p}{2}<\min\{N-2,2\}\Longleftrightarrow p>2$.

\textbf{Case 2 $N=4$:} By Lemma \ref{u-varepsilon-es} we have that, for some $K_4>0$, as $n\to\infty$,
\[
\|v_{\varepsilon_n}\|_2^2=\omega\varepsilon_n^2|\text{ln}\varepsilon_n|+o(\varepsilon_n^{2})\quad\text{and}\quad
\|v_{\varepsilon_n}\|_q^2=K_4\varepsilon_n^{4-p}+o(\varepsilon_n^{4-p}).
\]
Thus, for some constants, $\tilde{K}_3>0$, $\tilde{K}_4>0$, for $n\in\mathbb{N}$ sufficiently large,
\[
\frac{1}{2}\int_{\Omega}V(x)v_{\varepsilon_n}^2\mathrm{d}x
-\frac{\beta}{p}T_1^{\frac{N(p-2)}{2}}\int_{\Omega}|v_{\varepsilon_n}|^p\mathrm{d}x
+O(\varepsilon_n^{N-2})
=\tilde{K}_3\varepsilon_n^2|\text{ln}\varepsilon_n|
-\tilde{K}_4\varepsilon_n^{4-p}+O(\varepsilon_n^{2})
\le-\frac{\tilde{K}_4}{2}\varepsilon_n^{4-p}<0
\]
since $4-p<2$. In view of Cases 1 and 2 we finish the proof of Lemma \ref{mp-level} if $N\ge4$.
\end{proof}

In view of Lemma \ref{betale0-mp-g}, the energy functional $E_{r,s}$ possesses the mountain pass geometry.  For the sequence obtained from  Theorem \ref{mp-th}, we will show its convergence in the next theorem.
\begin{theorem}\label{beta>0-s-omega}
For $r>r_\alpha$, where $r_\alpha$ is defined in Lemma \ref{betale0-mp-g}. There is $\alpha_2>0$ such that problem \eqref{main-eq-s-omega} admits a solution $(\lambda_{r,s},u_{r,s})$ for almost every $s\in\left[\frac{1}{2},1\right]$ and $0<\alpha<\alpha_2$. Moreover, $u_{r,s}\ge0$ and $E_{r,s}(u_{r,s})=m_{r,s}$.
\end{theorem}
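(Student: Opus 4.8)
The plan is to follow verbatim the scheme of Theorem \ref{beta>0-s-omega-ge}, adjusting only the places where the sign of $\beta$ enters. First I would apply the monotonicity trick (Theorem \ref{mp-th}) to the family $E_{r,s}$ on $S_{r,\alpha}$, written as $\Phi_s=A-sB$ with
\[
A(u)=\frac{1}{2}\int_{\Omega_r}|\nabla u|^2\mathrm{d}x+\frac{1}{2}\int_{\Omega_r}V(x)u^2\mathrm{d}x-\frac{\beta}{p}\int_{\Omega_r}|u|^p\mathrm{d}x,\qquad B(u)=\frac{1}{2^*}\int_{\Omega_r}|u|^{2^*}\mathrm{d}x.
\]
Because $\beta\le0$, the term $-\frac{\beta}{p}\int_{\Omega_r}|u|^p\mathrm{d}x$ is nonnegative; hence $B\ge0$ and, using $(V_0)$, $A(u)\to+\infty$ as $\|u\|\to+\infty$ on the fixed-mass sphere. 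The mountain-pass geometry and the two-sided level estimate
\[
\frac{1}{N}\big(1-\|V_-\|_{\frac{N}{2}}S^{-1}\big)^{\frac{N}{2}}S^{\frac{N}{2}}\le m_{r,s}(\alpha)<\frac{1}{N}s^{\frac{2-N}{2}}S^{\frac{N}{2}}
\]
are furnished by Lemmas \ref{betale0-mp-g} and \ref{mp-level}. Theorem \ref{mp-th} then yields, for almost every $s\in[\frac12,1]$, a bounded nonnegative Palais--Smale sequence $\{u_n\}\subset S_{r,\alpha}$ for $E_{r,s}$ at the level $m_{r,s}(\alpha)$.

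Next I would pass to the limit. Using the Gagliardo--Nirenberg and Sobolev inequalities together with the uniform $H^1$-bound, the associated Lagrange multipliers $\lambda_n$ are bounded, say $\lambda_n\le K$, and one extracts $u_n\rightharpoonup u_0$ in $H_0^1(\Omega_r)$ with $u_n\to u_0$ in $L^t(\Omega_r)$ for $2\le t<2^*$ and $\lambda_n\to\lambda$; since $\|u_0\|_2^2=\alpha$, the limit $u_0\not\equiv0$ solves \eqref{main-eq-s-omega}. Setting $v_n=u_n-u_0$ and $b=\lim\int_{\Omega_r}|\nabla v_n|^2\mathrm{d}x$, the Brezis--Lieb lemma and the derivative condition give $\int_{\Omega_r}|v_n|^{2^*}\mathrm{d}x\to b/s$, whence $b=0$ or $b\ge s^{\frac{2-N}{2}}S^{\frac{N}{2}}$, while the energy splits as $m_{r,s}(\alpha)=E_{r,s}(u_0)+\frac{b}{N}$.

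The decisive step is to exclude $b\ge s^{\frac{2-N}{2}}S^{\frac{N}{2}}$, and here I would argue exactly as in Theorem \ref{beta>0-s-omega-ge}. First a uniform lower bound $\int_{\Omega_r}|\nabla u_0|^2\mathrm{d}x\ge C>0$ is obtained from the equation; then, testing with the combination $E_{r,s}(u_0)-\frac1p\big(E'_{r,s}(u_0)u_0+\lambda_n\alpha\big)$, which annihilates the $\int_{\Omega_r}|u_0|^p\mathrm{d}x$ term irrespective of the sign of $\beta$, one gets
\[
E_{r,s}(u_0)\ge\Big(\frac12-\frac1p\Big)\big(1-\|V_-\|_{\frac{N}{2}}S^{-1}\big)\int_{\Omega_r}|\nabla u_0|^2\mathrm{d}x-\frac1p\lambda_n\alpha.
\]
Since $\lambda_n\le K$, the choice $\alpha_2:=\frac{C(p-2)}{2K}\big(1-\|V_-\|_{\frac{N}{2}}S^{-1}\big)$ makes the right-hand side nonnegative for $0<\alpha<\alpha_2$, so $E_{r,s}(u_0)\ge0$. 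Combined with $m_{r,s}(\alpha)=E_{r,s}(u_0)+\frac bN$ and the strict bound $m_{r,s}(\alpha)<\frac1N s^{\frac{2-N}{2}}S^{\frac{N}{2}}$ from Lemma \ref{mp-level}, the case $b\ge s^{\frac{2-N}{2}}S^{\frac{N}{2}}$ would force $m_{r,s}(\alpha)\ge\frac1N s^{\frac{2-N}{2}}S^{\frac N2}$, a contradiction. Therefore $b=0$, so $u_n\to u_0$ strongly in $H_0^1(\Omega_r)$, giving $E_{r,s}(u_0)=m_{r,s}(\alpha)$ and, by the strong maximum principle, $u_{r,s}:=u_0\ge0$ is a solution of \eqref{main-eq-s-omega}.

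I expect the main obstacle to be precisely the recovery of compactness obstructed by the Sobolev critical exponent: a Palais--Smale sequence can concentrate into a rescaled Aubin--Talenti bubble and fail to converge. The whole argument hinges on beating the threshold energy $\frac1N s^{\frac{2-N}{2}}S^{\frac N2}$, which is why the strict upper bound for $m_{r,s}(\alpha)$ and the nonnegativity $E_{r,s}(u_0)\ge0$ (valid only for $\alpha$ below the explicit threshold $\alpha_2$) must be used in tandem. Notably, the sign condition $\beta\le0$ is helpful rather than harmful: the term $\beta\int_{\Omega_r}|u|^p\mathrm{d}x$ is nonpositive, so it only improves the energy estimates and, as noted, it drops out of the $\frac1p$-testing identity.
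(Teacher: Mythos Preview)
Your proposal is correct and follows essentially the same approach as the paper's own proof: the same decomposition $A-sB$ for the monotonicity trick, the same Brezis--Lieb splitting with the dichotomy $b=0$ or $b\ge s^{(2-N)/2}S^{N/2}$, the same lower bound $\int_{\Omega_r}|\nabla u_0|^2\ge C$ from testing the limit equation (using $\beta\le0$ to drop the subcritical term), the same $\tfrac1p$-testing identity yielding $E_{r,s}(u_0)\ge0$, and the same threshold $\alpha_2=\tfrac{C(p-2)}{2K}(1-\|V_-\|_{N/2}S^{-1})$. There is no meaningful deviation.
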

\begin{proof}
 Let us apply Theorem \ref{mp-th} to $E_{r,s}$ with $\Gamma_{r,\alpha}$ given in Lemma \ref{mp-level},
\[
A(u)=\frac{1}{2}\int_{\Omega}|\nabla u|^2\mathrm{d}x+\frac{1}{2}\int_{\Omega}V(x)u^2\mathrm{d}x
-\frac{\beta}{p}\int_{\Omega}|u|^p\mathrm{d}x
\]
and
\[
B(u)=\frac{1}{2^*}\int_{\Omega}|u|^{2^*}\mathrm{d}x.
\]
Note that the assumptions in Theorem \ref{mp-th} hold due to $\beta\le0$ and Lemma \ref{betale0-mp-g}. Hence, for almost every $s\in\left[\frac{1}{2},1\right]$, there exists a nonnegative bounded Palais-Smale sequence $\{u_n\}$:
\[
E_{r,s}(u_n)\to m_{r,s}(\alpha)\quad\text{and}\quad E'_{r,s}(u_n)|_{T_{u_n}S_{r,\alpha}}\to0
\]
where $T_{u_n}S_{r,\alpha}$ denoted the tangent space of $S_{r,\alpha}$ at $u_n$. 
Note that
\[
\lambda_n=-\frac{1}{\alpha}\bigg(\int_{\Omega_r}|\nabla u_n|^2\mathrm{d}x+\int_{\Omega_r}V(x) u^2_n\mathrm{d}x-\beta\int_{\Omega_r}|u_n|^p\mathrm{d}x-s\int_{\Omega_r}|u_n|^{2^*}\mathrm{d}x\bigg)
\le K
\]
is bounded and
\begin{equation}\label{un-ers}
E'_{r,s}(u_n)+\lambda_nu_n\to0 \quad\text{in}\ H^{-1}(\Omega_r).
\end{equation}
As a consequence there exist $u_0\in H_0^1(\Omega_r)$ and $\lambda\in\mathbb{R}$ such that up to a subsequence,
\[
\lambda_n\to\lambda,\quad u_n\rightharpoonup u_0\ \text{in}\ H_0^1(\Omega_r)\quad \text{and}\ u_n\to u_0\ \text{in}\ L^t(\Omega_r)\ \text{for all }2\le t<2^*,
\]
and $u_0$ satisfies
\begin{equation}\label{eq-u0}
\begin{cases}
-\Delta u_0+Vu_0+\lambda u_0=s|u_0|^{2^*-2}u_0+\beta|u_0|^{p-2}u_0&\text{in}\,\Omega_r,\\
u_0\in H_0^1(\Omega_r),\quad\int_{\Omega_r}|u_0|^2\mathrm{d}x=\alpha.
\end{cases}
\end{equation}
We first \textbf{claim} that there exists a positive constant $C$ such that  $\int_{\Omega_r}|\nabla u_0|^2\mathrm{d}x\ge C$. Indeed, from \eqref{eq-u0} and Sobolev embedding inequality, we have
\begin{eqnarray*}
(1-\|V_-\|_{\frac{N}{2}}S^{-1})\int_{\Omega_r}|\nabla u_0|^2\mathrm{d}x
&\le&\int_{\Omega_r}|\nabla u_0|^2\mathrm{d}x+\int_{\Omega_r}V(x)u_0^2\mathrm{d}x+\lambda_n\alpha\\
&=&s\int_{\Omega_r}|u_0|^{2^*}\mathrm{d}x+\beta\int_{\Omega_r}|u_0|^{p}\mathrm{d}x\\
&\le&Cs\bigg(\int_{\Omega_r}|\nabla u_0|^2\mathrm{d}x\bigg)^{\frac{2^*}{2}},
\end{eqnarray*}
this implies the claim. Let $\alpha_2:=\frac{ C(p-2)}{2K}(1-\|V_-\|_{\frac{N}{2}}S^{-1})$, we next claim that $E_{r,s}(u_0)\ge0$ for $0<\alpha\le\alpha_2$.  In fact, from \eqref{un-ers} and \eqref{eq-u0}, we know that
\begin{eqnarray*}
E_{r,s}(u_0)&=&E_{r,s}(u_0)-\frac{1}{p}(E'_{r,s}(u_0)u_0+\lambda_n\alpha)\\
&\ge&(\frac{1}{2}-\frac{1}{p})\int_{\Omega_r}|\nabla u_0|^2\mathrm{d}x
+(\frac{1}{2}-\frac{1}{p})\int_{\Omega_r}V(x)u_0^2\mathrm{d}x
+(\frac{1}{p}-\frac{1}{2^*})s\int_{\Omega_r}|u_0|^{2^*}\mathrm{d}x-\frac{1}{p}\lambda_n\alpha\\
&\ge&(\frac{1}{2}-\frac{1}{p})(1-\|V_-\|_{\frac{N}{2}}S^{-1})\int_{\Omega_r}|\nabla u_0|^2\mathrm{d}x-\frac{1}{p}K\alpha\\
&\ge&(\frac{1}{2}-\frac{1}{p})(1-\|V_-\|_{\frac{N}{2}}S^{-1})C-\frac{1}{p}K\alpha_0=0.
\end{eqnarray*}
In view of \eqref{un-ers}, we have
\[
E'_{r,s}(u_n)u_0+\lambda_n\int_{\Omega_r}u_nu_0\mathrm{d}x\to0\quad\text{and}\quad E'_{r,s}(u_n)u_n+\lambda_n\alpha\to0\quad\text{as}\quad n\to\infty.
\]
Owing to
\[
\underset{n\to\infty}{\lim}\int_{\Omega_r}V(x)u_n^2\mathrm{d}x=\int_{\Omega_r}V(x)u_0^2\mathrm{d}x.
\]
We conclude that
\begin{equation}\label{e'un0}
o_n(1)=E'_{r,s}(u_n)(u_n-u_0)
=\int_{\Omega_r}\big(|\nabla u_n|^2-|\nabla u_0|^2\big)\mathrm{d}x-s\int_{\Omega_r}\big(|u_n|^{2^*}-|u_0|^{2^*}\big)\mathrm{d}x.
\end{equation}
Next, let us define $v_n:=u_n-u_0$, going to a subsequence, we assume that
\[
\int_{\Omega_r} |\nabla v_n|^2 \mathrm{d}x\to b\ge0\quad\text{as}\,n\to\infty.
\]
Note that  Brezis-Lieb Lemma and \eqref{e'un0}, so we arrive that
\[
\int_{\Omega_r} |v_n|^{2^*} \mathrm{d}x\to \frac{b}{s}\quad\text{as}\,n\to\infty.
\]
The definition of $S$ indicates that
\[
\int_{\Omega_r} |\nabla v_n|^2 \mathrm{d}x\ge S\bigg(\int_{\Omega_r}|v_n|^{2^*}\mathrm{d}x\bigg)^{\frac{2}{2^*}},
\]
and moreover, $b=0$ or $b\ge s^{\frac{2-N}{2}}S^{\frac{N}{2}}$.
We argue by contradiction that $b\ge s^{\frac{2-N}{2}}S^{\frac{N}{2}}$,
\begin{eqnarray*}
m_{r,s}(\alpha)+o_n(1)&=&\underset{n\to\infty}{\lim}E_{r,s}(u_n)\\
&=&\underset{n\to\infty}{\lim}\Bigg\{\frac{1}{2}\int_{\Omega_r}|\nabla u_n|^2\mathrm{d}x
+\frac{1}{2}\int_{\Omega_r}V(x)u_n^2\mathrm{d}x
-\frac{s}{2^*}\int_{\Omega_r}|u_n|^{2^*}\mathrm{d}x
-\frac{\beta}{p}\int_{\Omega_r}|u_n|^p\mathrm{d}x\Bigg\}\\
&=&E_{r,s}(u_0)+\frac{1}{2}\int_{\Omega_r}\bigg(|\nabla u_n|^2-|\nabla u_0|^2\bigg)\mathrm{d}x
-\frac{s}{2^*}\int_{\Omega_r}\bigg(|u_n|^{2^*}-|u_0|^{2^*}\bigg)\mathrm{d}x\\
&\ge&\frac{b}{N}\ge s^{\frac{2-N}{2}}S^{\frac{N}{2}}\frac{1}{N}.
\end{eqnarray*}
However, this is not possible since $m_{r,s}(\alpha)<\frac{1}{N}s^{\frac{2-N}{2}}S^{\frac{N}{2}}$. Hence, $b=0$ and then $u_n\to u_0$ in $H_0^1(\Omega_r)$ as $n\to\infty$. Consequently, $E_{r,s}(u_0)=m_{r,s}(\alpha)$ and $u_0$ is a nonnegative normalized solution to \eqref{main-eq-s-omega}. The proof is complete.
\end{proof}

Next, we will further establish a uniform estimate  in order to get a solution of problem \eqref{main-eq-omega}.
\begin{lemma}\label{nabla-u-bdd}
Suppose that $(\lambda,u)\in\mathbb{R}\times S_{r,\alpha}$ is a solution of problem \eqref{main-eq-s-omega} established in Theorem \ref{beta>0-s-omega} for some $r$ and $s$, then
\[
\int_{\Omega_r}|\nabla u|^2\mathrm{d}x\le \frac{1}{2}(2S)^{\frac{N}{2}}+\alpha\bigg(\frac{N-2}{4}\|\nabla V\cdot x\|_\infty+\frac{N}{2}\|V\|_\infty\bigg).
\]
\end{lemma}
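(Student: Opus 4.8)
The plan is to pin down $\int_{\Omega_r}|\nabla u|^2\,\mathrm{d}x$ exactly by combining the three identities available for a solution $(\lambda,u)$ of \eqref{main-eq-s-omega}, and then to throw away the terms that carry a favourable sign. First I would record the Nehari-type identity obtained by testing \eqref{main-eq-s-omega} against $u$,
\[
\int_{\Omega_r}|\nabla u|^2\mathrm{d}x+\int_{\Omega_r}Vu^2\mathrm{d}x+\lambda\alpha
=s\int_{\Omega_r}|u|^{2^*}\mathrm{d}x+\beta\int_{\Omega_r}|u|^{p}\mathrm{d}x,
\]
the Pohozaev identity for \eqref{main-eq-s-omega} (the analogue of \eqref{pohozaev-ge}, with $s\beta$ replaced by $\beta$ in the $|u|^p$ term, since here the subcritical nonlinearity is not carried by $s$), and the energy identity $E_{r,s}(u)=m_{r,s}(\alpha)$ furnished by Theorem \ref{beta>0-s-omega}.

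Next I would use the Nehari identity to eliminate the Lagrange contribution $\lambda\alpha$ from the Pohozaev identity, obtaining a relation free of $\lambda$, and then feed in the energy identity to eliminate $\int_{\Omega_r}|u|^{2^*}\mathrm{d}x$. The result is a single linear equation for $\int_{\Omega_r}|\nabla u|^2\mathrm{d}x$. Tracking the coefficients through the elementary identities $\tfrac{1}{2^*}-\tfrac12=-\tfrac1N$ and $\tfrac{2^*}{N}=\tfrac{2}{N-2}$, I expect to arrive at the clean expression
\[
\int_{\Omega_r}|\nabla u|^2\mathrm{d}x
=Nm_{r,s}(\alpha)-\frac{N}{2}\int_{\Omega_r}Vu^2\mathrm{d}x
-\frac{N-2}{4}\int_{\Omega_r}\tilde{V}u^2\mathrm{d}x
-\frac{N-2}{4}\int_{\partial\Omega_r}|\nabla u|^2(x\cdot\textbf{n})\mathrm{d}\sigma
+\frac{N(N-2)\beta(2^*-p)}{4p}\int_{\Omega_r}|u|^p\mathrm{d}x.
\]

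The final step is a sign analysis. Since $\Omega_r$ is star-shaped with respect to the origin, $x\cdot\textbf{n}\ge0$ on $\partial\Omega_r$, so the boundary integral is nonnegative and its term may be dropped; because $\beta\le0$ and $2<p<2^*$, the coefficient $\tfrac{N(N-2)\beta(2^*-p)}{4p}$ is nonpositive, so the $|u|^p$ term may be dropped as well. Estimating the two surviving potential integrals by $\big|\int_{\Omega_r}Vu^2\big|\le\|V\|_\infty\alpha$ and $\big|\int_{\Omega_r}\tilde V u^2\big|\le\|\nabla V\cdot x\|_\infty\alpha$ (using $\|u\|_2^2=\alpha$), and invoking the upper bound $m_{r,s}(\alpha)<\tfrac1N s^{\frac{2-N}{2}}S^{N/2}$ from Lemma \ref{mp-level} together with $s\ge\tfrac12$, which gives $Nm_{r,s}(\alpha)<2^{\frac{N-2}{2}}S^{N/2}=\tfrac12(2S)^{N/2}$, yields precisely the asserted estimate. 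The main obstacle is the bookkeeping in the double elimination and, above all, confirming that both the boundary term and the $|u|^p$ term enter with nonpositive (hence discardable) coefficients; once the explicit identity is in hand, the rest is a direct estimate, and the uniformity in $r$ and $s$ is automatic because none of the bounding constants depend on them.
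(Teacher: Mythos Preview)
Your proposal is correct and follows essentially the same route as the paper: combine the Nehari identity, the Pohozaev identity, and the energy level $E_{r,s}(u)=m_{r,s}(\alpha)$ to eliminate $\lambda$ and $\int|u|^{2^*}$, then discard the boundary term (star-shapedness) and the $|u|^p$ term (sign of $\beta\le0$), bound the potential integrals by $\|V\|_\infty\alpha$ and $\|\nabla V\cdot x\|_\infty\alpha$, and finish with the estimate $m_{r,s}(\alpha)<\tfrac1N s^{\frac{2-N}{2}}S^{N/2}\le\tfrac{1}{2N}(2S)^{N/2}$ from Lemma~\ref{mp-level}. The only cosmetic difference is that you first record the exact identity for $\int_{\Omega_r}|\nabla u|^2\,\mathrm{d}x$ and then drop the two favourably signed terms, whereas the paper proceeds via a chain of inequalities; the content is the same and your computed coefficients are correct.
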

\begin{proof}
 By an argument similar to that in Lemma \ref{nabla-u-bdd-j}, we deduce that
\begin{eqnarray*}
\frac{1}{N}\int_{\Omega_r}|\nabla u|^2\mathrm{d}x&-&\frac{1}{2N}\int_{\partial\Omega_r}|\nabla u|^2(x\cdot\textbf{n})\mathrm{d}\sigma-\frac{1}{2N}\int_{\Omega_r}(\nabla V\cdot x)u^2\mathrm{d}x\\
&=&\frac{s}{N}\int_{\Omega_r}|u|^{2^*}\mathrm{d}x+\frac{\beta(p-2)}{2p}\int_{\Omega_r}|u|^p\mathrm{d}x\\
&\ge&\frac{2^*}{N}\bigg(\frac{s}{2^*}\int_{\Omega_r}|u|^{2^*}\mathrm{d}x
+\frac{\beta}{p}\int_{\Omega_r}|u|^p\mathrm{d}x\bigg)\\
&=&\frac{2^*}{N}\bigg(\frac{1}{2}\int_{\Omega_r}|\nabla u|^2\mathrm{d}x
+\frac{1}{2}\int_{\Omega_r}V|u|^2\mathrm{d}x-m_{r,s(\alpha)}\bigg),
\end{eqnarray*}
where $\beta\le0$ have used. Recall that  $\Omega_r$ is starshaped with respect to 0, so $x\cdot\textbf{n}\ge0$ for any $x\in\partial\Omega_r$, thereby,
\begin{eqnarray*}
\frac{2^*}{N}m_{r,s}(\alpha)&\ge&\frac{2^*}{N}\bigg(\frac{1}{2}\int_{\Omega_r}|\nabla u|^2\mathrm{d}x
+\frac{1}{2}\int_{\Omega_r}V|u|^2\mathrm{d}x\bigg)\\
&&\quad-\bigg(\frac{1}{N}\int_{\Omega_r}|\nabla u|^2\mathrm{d}x
-\frac{1}{2N}\int_{\partial\Omega_r}|\nabla u|^2(x\cdot\textbf{n})\mathrm{d}\sigma-\frac{1}{2N}\int_{\Omega_r}(\nabla V\cdot x)u^2\mathrm{d}x\bigg)\\
&\ge&\frac{2^*-2}{2N}\int_{\Omega_r}|\nabla u|^2\mathrm{d}x-\alpha\bigg(\frac{1}{2N}\|\nabla V\cdot x\|_\infty+\frac{2^*}{2N}\|V\|_\infty\bigg).
\end{eqnarray*}
According to  $m_{r,s}(\alpha)<\frac{1}{2N}(2S)^{\frac{N}{2}}$, the proof of Lemma \ref{nabla-u-bdd} is now complete.
\end{proof}

\begin{lemma}\label{lambda-r>0-betale0}
For every $\alpha\in\left(0,\alpha_2\right)$, where $\alpha_2$ is given in Theorem \ref{beta>0-s-omega}, problem \eqref{main-eq-s-omega}  has a solution $(\lambda_r,u_r)$ provided $r>r_\alpha$ where $r_\alpha$ is as in Lemma \ref{betale0-mp-g}. Moreover, $u_r>0$ in $\Omega_r$.
\end{lemma}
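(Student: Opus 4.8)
The plan is to obtain $(\lambda_r,u_r)$ as a limit, along a sequence $s_n\to1^-$, of the solutions $(\lambda_{r,s_n},u_{r,s_n})$ to the $s$-perturbed problem \eqref{main-eq-s-omega} furnished by Theorem \ref{beta>0-s-omega}, in exact parallel with Lemma \ref{beta>0-e-strong-ge} from the case $\beta>0$. First I would fix $\alpha\in(0,\alpha_2)$ and $r>r_\alpha$ and invoke Theorem \ref{beta>0-s-omega} to get, for almost every $s\in[\tfrac12,1]$, a nonnegative solution $(\lambda_{r,s},u_{r,s})$ with $E_{r,s}(u_{r,s})=m_{r,s}(\alpha)$; selecting $s_n\uparrow1$ inside this full-measure set yields a sequence of genuine solutions. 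By Lemma \ref{nabla-u-bdd} the gradients $\|\nabla u_{r,s_n}\|_2^2$ are bounded uniformly in $s_n$, while the multiplier formula for $\lambda_{r,s_n}$ together with the Gagliardo–Nirenberg and Sobolev inequalities shows $\{\lambda_{r,s_n}\}$ is bounded. Hence, up to a subsequence, $\lambda_{r,s_n}\to\lambda_r$ and $u_{r,s_n}\rightharpoonup u_r$ in $H_0^1(\Omega_r)$ with $u_{r,s_n}\to u_r$ in $L^t(\Omega_r)$ for $2\le t<2^*$; since $\Omega_r$ is bounded the $L^2$ convergence gives $\|u_r\|_2^2=\alpha$, so $u_r\in S_{r,\alpha}$ and $u_r\neq0$. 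Passing to the limit in the weak formulation, using $\int_{\Omega_r}Vu_{r,s_n}^2\to\int_{\Omega_r}Vu_r^2$ (which follows from $V\in L^{N/2}$ by an $\varepsilon$-splitting of $V$), shows $(\lambda_r,u_r)$ solves \eqref{main-eq-omega}, i.e. the case $s=1$ of \eqref{main-eq-s-omega}.

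The heart of the matter is upgrading weak to strong convergence across the critical term as $s_n\to1$. I would set $v_n:=u_{r,s_n}-u_r$ and, assuming $\int_{\Omega_r}|\nabla v_n|^2\to b\ge0$, test the equation against $v_n$ and apply the Brezis–Lieb lemma to get $\int_{\Omega_r}|\nabla v_n|^2-s_n\int_{\Omega_r}|v_n|^{2^*}\to0$, whence $\int_{\Omega_r}|v_n|^{2^*}\to b$ as well; the definition of $S$ then forces $b=0$ or $b\ge S^{N/2}$. To exclude the second alternative I need two ingredients. The first is the nonnegativity $E_{r,1}(u_r)\ge0$ for $\alpha\le\alpha_2$, obtained verbatim as in Theorem \ref{beta>0-s-omega} by writing $E_{r,1}(u_r)-\tfrac1p\big(E_{r,1}'(u_r)u_r+\lambda_r\alpha\big)$ and combining the lower bound $\int_{\Omega_r}|\nabla u_r|^2\ge C$ with $\lambda_r\le K$ and the definition of $\alpha_2$. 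The second is the strict gap $m_{r,1}(\alpha)<\tfrac1N S^{N/2}$ from Lemma \ref{mp-level} at $s=1$, together with the continuity $m_{r,s_n}(\alpha)\to m_{r,1}(\alpha)$: monotonicity of $s\mapsto m_{r,s}(\alpha)$ gives the inequality $\ge$, while testing a near-optimal path for $m_{r,1}(\alpha)$ against $E_{r,s_n}$, which differs from $E_{r,1}$ only by the uniformly bounded term $\tfrac{1-s_n}{2^*}\int_{\Omega_r}|\gamma(t)|^{2^*}\to0$, gives $\limsup\le$.

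Granting these, the Brezis–Lieb splitting yields $m_{r,s_n}(\alpha)=E_{r,s_n}(u_{r,s_n})+o_n(1)=E_{r,1}(u_r)+\big(\tfrac12-\tfrac{s_n}{2^*}\big)b+o_n(1)$, so if $b\ge S^{N/2}$ then letting $n\to\infty$ and using $\tfrac12-\tfrac{s_n}{2^*}\to\tfrac1N$ gives $m_{r,1}(\alpha)\ge E_{r,1}(u_r)+\tfrac{b}{N}\ge\tfrac{S^{N/2}}{N}$, contradicting $m_{r,1}(\alpha)<\tfrac1N S^{N/2}$. Therefore $b=0$, so $u_{r,s_n}\to u_r$ strongly in $H_0^1(\Omega_r)$ and $(\lambda_r,u_r)$ is a genuine solution of \eqref{main-eq-omega}. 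Finally, rewriting the equation as $-\Delta u_r=c(x)u_r$ with $c\in L^\infty(\Omega_r)$ (using that $V$ and $u_r$ are bounded) and $u_r\ge0$, $u_r\not\equiv0$, the strong maximum principle gives $u_r>0$ in $\Omega_r$. I expect the delicate point to be precisely the passage $s_n\to1$ in the critical nonlinearity: at each $s_n<1$ the natural threshold $\tfrac1N s_n^{(2-N)/2}S^{N/2}$ lies strictly \emph{above} $\tfrac1N S^{N/2}$, so only the limiting value $s=1$, combined with the $s$-uniform bounds of Lemma \ref{nabla-u-bdd} and the strict gap of Lemma \ref{mp-level}, closes the compactness argument.
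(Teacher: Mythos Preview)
Your proposal is correct and follows essentially the same approach as the paper, which simply states that the proof is similar to that of Lemma \ref{beta>0-e-strong-ge} and omits the details. You have filled in precisely the details the paper leaves implicit: taking $s_n\uparrow 1$ from the full-measure set of Theorem \ref{beta>0-s-omega}, using the uniform bound of Lemma \ref{nabla-u-bdd}, and repeating the Brezis--Lieb compactness argument from Theorem \ref{beta>0-s-omega} along the sequence $s_n\to1$; your additional care in establishing the continuity $m_{r,s_n}(\alpha)\to m_{r,1}(\alpha)$ to exploit the strict gap $m_{r,1}(\alpha)<\tfrac1N S^{N/2}$ is exactly the point needed to close the argument at the endpoint $s=1$, and the paper relies on this implicitly.
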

\begin{proof}
 The proof  is similar to that of Lemma \ref{beta>0-e-strong-ge}, so we omit it here.
\end{proof}

\noindent\textbf{Acknowledgement}\ \ 

\bibliographystyle{amsplain}

\end{document}